\documentclass[11pt,reqno]{amsart}

\usepackage{xcolor}
\usepackage[utf8]{inputenc}
\usepackage[T1]{fontenc}
\usepackage[english]{babel}

\usepackage{microtype}
\usepackage{amsmath,amssymb,amsthm}
\usepackage{amsthm, thmtools, thm-restate}
\usepackage{mathtools} 
\usepackage{bm}        
\usepackage{enumitem}

\usepackage{graphicx}  
\usepackage{caption}
\usepackage{subcaption}
\usepackage{tikz}      
\usepackage{mathrsfs}  
\usepackage{booktabs}  
\usepackage{geometry}
\usepackage{cancel}

\theoremstyle{plain}
\newtheorem{theorem}{Theorem}[section]      
\newtheorem{lemma}[theorem]{Lemma}          
\newtheorem{proposition}[theorem]{Proposition}

\newtheorem{claim}[theorem]{Claim}          

\newtheorem*{theorem*}{Theorem}             

\theoremstyle{definition}

\usepackage[hypertexnames=false]{hyperref} 


\numberwithin{equation}{section}

\newcommand{\R}{\mathbb{R}}

\usepackage{comment}
\usepackage{graphicx}  
\usepackage{caption}
\usepackage{subcaption}
\usepackage{tikz}      
\usepackage{mathrsfs}  
\usepackage{booktabs}  
\usepackage{geometry}
\usepackage{cancel}

\title[Nodal solutions for the Minkowski curvature equation]{Collapse and saturation for radial nodal solutions of a Lorentz--Minkowski mean curvature equation}

\author[A. Boscaggin]{Alberto Boscaggin}

\author[F. Colasuonno]{Francesca Colasuonno}

\author[R. Ziegele]{Ricardo Ziegele}
\address{Alberto Boscaggin, Francesca Colasuonno, Ricardo Ziegele \newline \indent 
Dipartimento di Matematica
\newline\indent
Universit\`a di Torino
\newline\indent
via Carlo Alberto 10, 10123 Torino, Italy}
\email{alberto.boscaggin@unito.it}
\email{francesca.colasuonno@unito.it}
\email{ricardoalfonso.ziegelealiaga@unito.it}

\begin{document}
\keywords{Nodal solutions, A priori asymptotic dichotomy, Gradient saturation, Forward and backward shooting method}
\subjclass[2020]{35J93, 35B40, 34B15, 35B05}
\maketitle
\begin{abstract}
We consider the Dirichlet problem for the Lorentz--Minkowski mean curvature operator in a radial setting,
$$
-\operatorname{div}\left(\frac{\nabla u}{\sqrt{1-|\nabla u|^2}}\right) 
= \lambda u + \mu g(u) \quad \text{in } \mathcal B_R,
\qquad u = 0 \quad \text{on } \partial\mathcal B_R,
$$
on a ball of radius $R$ centered at the origin $\mathcal B_R \subset \mathbb{R}^N$, where $\lambda, \mu \geq 0$ are parameters, and the nonlinearity $g$ satisfies suitable growth assumptions at zero. We first prove an a priori dichotomy for the asymptotic behaviour, as $\mu \to +\infty$, of solutions with a prescribed number of zeros: either they converge uniformly to $0$, or they converge to a piecewise affine limit function $u_\infty$, with $|u_\infty'|=1$ a.e., having the same nodal properties.  We then show that, when the number of zeros is appropriately prescribed, both alternatives are actually realized.
More precisely, building on the multiplicity result in [A. Boscaggin, F. Colasuonno, and R. Ziegele, \emph{Positive and nodal solutions for the Minkowski mean curvature
equation: multiplicity and asymptotics}, arXiv:2607.15956 (2026)], we refine the shooting construction therein to obtain a family of small solutions converging to zero and, through a backward shooting argument, a family of large solutions converging to a saturated piecewise affine profile.

\end{abstract}

\tableofcontents

\section{Introduction}

In this paper, we are concerned with radial solutions of the boundary value problem
\begin{equation}
\label{eq:main}
\begin{cases}
\displaystyle
-\operatorname{div}\left(\frac{\nabla u}{\sqrt{1-|\nabla u|^2}}\right)
=\lambda u+\mu g(u) & \text{in } \mathcal{B}_R,\\[2mm]
u=0 & \text{on } \partial \mathcal{B}_R,
\end{cases}
\end{equation}
where $\mathcal{B}_R:= \{ x \in  \R^{N} \, \colon \, |x| < R \}$, $\lambda, \mu$ are non-negative parameters and $g$ is a nonlinear term satisfying:
\begin{enumerate}
\item[$(g_1)$] $g: \mathbb{R} \to \mathbb{R}$ is locally Lipschitz continuous;

\item[$(g_2)$] there exist $\delta > 0$, $\gamma > 2$, and $0 < C_1 \leq C_2$ such that $C_1 |s|^\gamma \leq g(s) s \leq C_2 |s|^\gamma$ for every $ s \in (-\delta, \delta)$;

\item[$(g_3)$] $g(s)s > 0$ for every\footnote{Let us note that, due to the bound $\Vert \nabla u \Vert_{L^\infty} \leq 1$, radial solutions of \eqref{eq:main} satisfy $\Vert u \Vert_{L^\infty} \leq R$, and thus assumption $(g_3)$ could be replaced by the weaker requirement that $g(s)s > 0$ for $0 \neq \vert s \vert \leq R$. We adopt this global formulation for convenience in the proofs.} $s \neq 0$.

\end{enumerate}
For definiteness, throughout the paper we restrict our attention to radial
solutions satisfying $u(0)>0$. The case $u(0)<0$ can be treated in an
entirely analogous way, with the signs reversed where appropriate.

As is well known, the differential operator appearing in \eqref{eq:main} is the mean curvature operator for spacelike graphs in Lorentz--Minkowski space, the condition $|\nabla u|<1$ corresponding precisely to the spacelike constraint; 
for the classical geometric treatment of the prescribed mean curvature problem, we refer to the seminal work by Bartnik and Simon \cite{BaSi82} and to the more recent contributions \cite{BoIa19,ByIkMaMa24,DeFiMi26}. On the other hand, from a nonlinear analysis perspective, boundary value problems driven by the Lorentz--Minkowski mean curvature operator have been widely investigated in the last decades, using topological, variational and dynamical systems techniques. Without attempting to give a comprehensive account of the literature, we refer, for instance, to \cite{Az16,BeJeTo13,BeJeMa14,BoCoNo20b,BoGa19,CoObOmRi13b,Da16,Mawhin} and the references therein.

Under assumptions slightly less restrictive than $(g_1)$, $(g_2)$ and $(g_3)$ listed above, problem \eqref{eq:main} was recently considered in our previous paper \cite{BoCoZi26}. Using a shooting technique, we proved there that, for every prescribed number of nodal regions and for $\mu$ sufficiently large, with a threshold depending on the prescribed nodal pattern, radial nodal solutions exist in a characteristic small/large configuration. 
More precisely, if $\ell\geq1$ is any integer and $\mu$ is large enough, a large solution with exactly $\ell-1$ zeros in $(0,R)$ exists for every $\lambda\geq0$, 
whereas a corresponding small solution is obtained under the additional spectral condition
\[
\lambda<\lambda_\ell^{\mathrm{rad}},
\]
where $\lambda_\ell^{\mathrm{rad}}$ denotes the $\ell$-th radial eigenvalue of the Dirichlet Laplacian. 
Thus, the solutions appear essentially in pairs, except that the presence of the linear term $\lambda u$ may remove some of the small solutions. A related paired multiplicity pattern for the purely nonlinear problem, corresponding to $\lambda=0$ in the present setting, had previously been obtained in \cite{BoGa19}.

The terminology \emph{small} and \emph{large} is also suggested by the asymptotic picture obtained in \cite{BoCoZi26} for the corresponding PDE on a general bounded domain $\Omega$. 
In that setting, two distinguished solutions were constructed variationally: a global minimizer of the associated action functional, and a min--max solution. 
Their behaviour as $\mu\to+\infty$ is radically different. The min--max solution converges uniformly to zero, whereas the global minimizer, when assumed to be positive (a fact which is always true when $g$ is odd), converges to the distance function
$u_\infty(x) := d(x,\partial\Omega)$,
thus saturating the spacelike constraint $|\nabla u|\leq1$ in the limit. Indeed, note that the limiting profile $u_\infty$ satisfies the eikonal equation
\[
|\nabla u_\infty|=1 \quad \text{in }\Omega,
\qquad
u_\infty=0 \quad \text{on }\partial\Omega,
\]
in the viscosity sense.

This naturally raises the question addressed in the present paper: \begin{quote} \emph{What is the asymptotic behaviour, as $\mu\to+\infty$, of the radial nodal solutions constructed in \cite{BoCoZi26}?} \end{quote} In particular, one may ask whether the same alternative between collapse to zero and saturation of the gradient constraint persists in the radial oscillatory setting and, in the latter case, what becomes of the nodal structure of the solutions.

Our first result gives a complete a priori answer to this question. Remarkably, no intermediate asymptotic behaviour is possible.

\begin{theorem}
\label{thm:main-dichotomy}
Assume that $(g_1)$, $(g_2)$ and $(g_3)$ hold. Let $\ell\geq1$ and let $\{u_{\mu,\ell}\}_\mu$ be a family of radial solutions of \eqref{eq:main} satisfying $u_{\mu,\ell}(0)>0$ and having exactly $\ell-1$ zeros in $(0,R)$. Then, up to a subsequence, as $\mu\to+\infty$, one of the following alternatives holds:
\begin{enumerate}
\item
$u_{\mu,\ell}\to0$ in $C([0,R])$, and there exists $\bar r \in [0,R)$ such that
\[
u_{\mu,\ell}\to0
\quad\text{in }C^1([\bar r,R]);
\]

\item
$u_{\mu,\ell}\to u_{\infty,\ell}$ in $C([0,R])$, where
$u_{\infty,\ell}$ is a piecewise affine function satisfying
\[
|u'_{\infty,\ell}|=1
\quad\text{a.e. in }[0,R],
\]
which has exactly $\ell-1$ zeros on $(0,R)$, is strictly decreasing in the interval from $r=0$ to its first zero (with $R$ regarded as the first zero when
$\ell=1$), and has exactly one local extremum between any two consecutive zeros.
\end{enumerate}
\end{theorem}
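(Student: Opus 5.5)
Write $u=u_{\mu,\ell}$ and work with the radial ODE
\[
-\bigl(r^{N-1}\phi(u')\bigr)'=r^{N-1}\bigl(\lambda u+\mu g(u)\bigr),\qquad \phi(s)=\frac{s}{\sqrt{1-s^2}},\qquad u'(0)=0,\ u(R)=0 .
\]
Since $|u'|<1$, the family is equi-Lipschitz with $\|u\|_\infty\le R$, so Ascoli gives, up to a subsequence, $u\to u_\infty$ uniformly with $u_\infty$ $1$-Lipschitz. Denote by $0<r_1<\dots<r_{\ell-1}<R$ the zeros of $u$, put $r_0=0$, $r_\ell=R$, and pass to a further subsequence so that each $r_j\to\bar r_j$. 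The proof then splits according to whether $u_\infty\equiv0$.

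\textbf{Structure of a nodal interval.} On $(r_{j-1},r_j)$, $u$ has one sign, say $u>0$, so $(r^{N-1}\phi(u'))'<0$ there. Hence $r^{N-1}\phi(u')$ is strictly decreasing and $u'$ changes sign at most once. Each hump therefore has exactly one maximum point $s_j$, with $u'>0$ on $(r_{j-1},s_j)$ and $u'<0$ on $(s_j,r_j)$; in the first hump $s_1=0$. Take $s_j\to\bar s_j$ along a subsequence.

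\textbf{Key estimate.} Let $I$ be a compact subinterval of an open nodal region, away from its endpoints, on which $|u|\ge c>0$ uniformly in $\mu$. By $(g_3)$ and compactness, $|g(u)|\ge m_c>0$ on $I$, so $|(r^{N-1}\phi(u'))'|\ge c'\mu$ there. Since $|r^{N-1}\phi(u')|$ vanishes only at the single point $s_j$, a monotone function with slope of order $\mu$ gives $|\phi(u')|\ge C\mu\,\mathrm{dist}(r,s_j)$ on $I$. Therefore $|u'|\to1$ uniformly on compact subsets of $I\setminus\{\bar s_j\}$, with the sign determined by the side of $s_j$. This is the mechanism that forces $|u_\infty'|=1$. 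Near the origin the factor $r^{N-1}$ is harmless: $\phi(u'(r))=-r^{1-N}\int_0^r t^{N-1}(\lambda u+\mu g(u))\,dt$ is still of order $\mu r$ when $u\ge c$.

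\textbf{Alternative (2), $u_\infty\not\equiv0$.} The main obstacle is to exclude degenerate limits, namely humps collapsing to zero, zero-length nodal intervals, or flat pieces where $u_\infty=0$ on an interval. This must be done by proving a rigidity statement: if $u_\infty$ is nonzero somewhere, it cannot vanish on a nontrivial interval, and no hump degenerates. By the key estimate, wherever $u_\infty\ne0$ we have $|u_\infty'|=1$, with slope $+1$ before $\bar s_j$ and $-1$ after it, for positive humps (signs reversed for negative ones). At a zero $\bar r_j$ approached from a region where $|u_\infty|>0$, the limit arrives with slope $\pm1$, so $|u'|$ is close to $1$ just before $r_j$. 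Crossing zero with $|u'|\approx1$, I would use monotonicity of $r^{N-1}\phi(u')$ on the next hump together with the smallness of $\mu g(u)$ near $u=0$ given by $(g_2)$. Over a short interval where $|u|\lesssim\mu^{-1/(\gamma-1)}$, the change in $\phi(u')$ is small; more precisely, integrating $\mu|u|^{\gamma-1}$ while $u$ grows linearly gives a total variation of order $\mu\eta^{\gamma}$, which is small if $\eta\ll\mu^{-1/\gamma}$. Hence $|u'|$ stays near $1$ and $|u|$ reaches a fixed positive size in the next region. Propagating this from a region where $u_\infty\ne0$, both forward and backward from zero to zero, shows that every hump of $u_\infty$ is nontrivial. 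Consequently $u_\infty$ is a tent on each $[\bar r_{j-1},\bar r_j]$, these are distinct zeros, there are exactly $\ell-1$ interior zeros, $u_\infty$ is decreasing on $[0,\bar r_1]$ because $s_1=0$, and there is exactly one extremum per interval. The delicate point is the backward propagation with the $r^{N-1}$ weight and the $\lambda u$ term, both of which are bounded perturbations.

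\textbf{Alternative (1), $u_\infty\equiv0$.} Here uniform convergence is already known, and it remains to find $\bar r\in[0,R)$ with $u'\to0$ uniformly on $[\bar r,R]$. I would use the energy
\[
E=\Phi(u')+\tfrac{\lambda}{2}u^2+\mu G(u),\qquad \Phi(s)=1-\sqrt{1-s^2},
\]
whose derivative is $E'=-\frac{N-1}{r}u'\phi(u')\le0$, so $E$ is nonincreasing. By $(g_2)$, $\mu G(u)\le C\mu|u|^\gamma$. Suppose $u'$ does not tend to $0$ near $R$, so that $|u'(t_\mu)|\ge\epsilon$ at points $t_\mu\to R$. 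The energy is then at least $\Phi(\epsilon)$ at every earlier radius. Combining this with $u\to0$ uniformly, a slope bounded away from $0$ persisting over a fixed length would contradict $u_\infty\equiv0$; this gives smallness of $u'$ once energy is lost. I would choose $\bar r$ to be the limit of the last maximum point $s_\ell$ (or of $r_{\ell-1}$) and argue via the key-estimate contrapositive and the energy that $\sup_{[\bar r,R]}|u'|\to0$. This step requires a careful choice of $\bar r$ and may be the second-hardest part.
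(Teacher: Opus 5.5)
Your argument for alternative (2) breaks at the zero-crossing step. Knowing that $|u'|\approx 1$ just before $r_j$ only tells you that $|\varphi(u'(r_j))|\to\infty$. It says nothing about the size of the momentum $v(r_j)=r_j^{N-1}\varphi(u'(r_j))$ relative to $\mu$. Your estimate (a variation $O(\mu\eta^\gamma)$ of $\varphi(u')$ while $|u|\le\eta\ll\mu^{-1/\gamma}$) keeps $|u'|$ near $1$ only until $|u|$ reaches a level that tends to $0$. Beyond that level $\mu|g(u)|\gtrsim\mu^{1/\gamma}$ is large. If, say, $|v(r_j)|$ is only of order $\sqrt{\mu}$ (still giving $|u'|\to1$), the next hump stops at a height of order $\mu^{-1/(2\gamma)}$. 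So ``$|u|$ reaches a fixed positive size in the next region'' does not follow, and the backward propagation has the same defect.

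The missing ingredient is a lower bound on the rescaled momentum $v/\mu$:
\begin{itemize}
\item Integrating the equation from the extremum $s_j$ (where $v=0$) across the part of the hump where $|u_\infty|\ge c$ gives $|v(r_j)|\ge c'\mu$.
\item Since $v$ is monotone on the next nodal interval and vanishes at the next extremum, you then get $\int_{r_j}^{s_{j+1}}r^{N-1}|g(u)|\,dr\ge c'-o(1)$.
\item Using only continuity of $g$, $g(0)=0$ and $|u|\le R$, this forces the next hump to have height and length bounded below.
\item The backward direction works the same way.
\end{itemize}
With this fix your hands-on route becomes a legitimate alternative to the paper's. The paper does the same bookkeeping through the limit $w_\mu=v_\mu/\mu\to w_\infty$. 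It shows that $(u_\infty,w_\infty)$ never meets the origin: otherwise $u_\infty$ would change sign in every one-sided neighbourhood of the contact point, contradicting the fixed nodal count. It then transfers the winding number $\frac{\pi}{2}+(\ell-1)\pi$ to the limit by a homotopy in $\mathbb{R}^2\setminus\{0\}$.

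Alternative (1) is not actually proved by your sketch. First, the monotone energy has kinetic term $(1-s^2)^{-1/2}-1$, not $1-\sqrt{1-s^2}$; only the former gives $E'=-\frac{N-1}{r}u'\varphi(u')$. More seriously, propagating $E\ge\Phi(\epsilon)$ to earlier radii gives no lower bound on $|u'|$ there. The energy may be stored in $\mu G(u)\asymp\mu|u|^\gamma$, which is not small merely because $u\to0$ uniformly. In addition, nothing guarantees that $\lim s_\ell$ or $\lim r_{\ell-1}$ is $<R$.

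The paper instead uses the energy forward:
\begin{itemize}
\item By pigeonhole it selects a monotone piece $J_\mu=[a_\mu,b_\mu]$ of a nodal interval with $b_\mu-a_\mu>c_0$ and $a_\mu<R-c_0/2$.
\item On $J_\mu$ it proves $|u'|\le K_1|u|$ and $|u|\asymp|u(a_\mu)|$.
\item Integrating the equation over $J_\mu$ gives $\mu|u(a_\mu)|^{\gamma-2}\le C$, hence $E(a_\mu)\le C|u(a_\mu)|^2$.
\item Then $\mu G(u)\le E(a_\mu)$ on $[a_\mu,R]$ gives $\mu\|g(u)\|_{L^\infty(\bar r,R)}=O(1)$, a bound on $u''$, and $C^1$ convergence by Ascoli.
\end{itemize}
Your backward idea can be rescued, but only with a step you omitted. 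Since $\int_0^R|u'|\,dr\le(2\ell-1)\|u\|_\infty\to0$, on most of $[\bar r/2,\bar r]$ the kinetic term is small. There $\mu G(u)\gtrsim1$, i.e.\ $\mu|g(u)|\gtrsim\mu^{1/\gamma}$. Monotonicity of $v$ on each of the at most $\ell$ nodal intervals then makes $|v|\gtrsim\mu^{1/\gamma}$, hence $|u'|\to1$, on a set of measure bounded below. This contradicts the same total-variation bound.
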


In particular, when $\ell=1$, the nontrivial limiting profile is uniquely determined and is given by
\[
u_{\infty,1}(r)=R-r,
\]
which is nothing but the distance function $x \mapsto d(x,\partial\Omega)$ (with $r = \vert x \vert$) when $\Omega$ is the ball
$\mathcal{B}_R$. Thus, when restricted to positive solutions ($\ell = 1)$, the second alternative recovers the distance-function limit obtained in \cite{BoCoZi26} for the corresponding problem on a general domain.

Theorem~\ref{thm:main-dichotomy} is a purely a priori classification result: the only assumption is that, for each $\mu$, one is given a solution with a fixed nodal pattern; no information on how such solutions are obtained is required. Besides the dichotomy itself, a relevant point is that in the nontrivial case the whole nodal structure is preserved in the singular limit.

Let us briefly describe the main mechanism behind the proof. For a radial solution $u=u(r)$, setting
\[
\varphi(s)=\frac{s}{\sqrt{1-s^2}},
\qquad s\in(-1,1),
\]
and introducing
\[
v(r)=r^{N-1}\varphi(u'(r)),
\]
the radial equation can be written as the first-order system
\begin{equation}
\label{eq:first-order}
\begin{cases}
\displaystyle
u'=\varphi^{-1}\left(\frac{v}{r^{N-1}}\right),\\[2mm]
\displaystyle
v'=-r^{N-1}\bigl(\lambda u+\mu g(u)\bigr).
\end{cases}
\end{equation}
For a family $\{u_{\mu,\ell}\}_{\mu}$ as in Theorem~\ref{thm:main-dichotomy}, the natural rescaling is
\[
w_{\mu,\ell}=\frac{v_{\mu,\ell}}{\mu}.
\]
The universal spacelike bound $|u'_{\mu,\ell}|<1$ provides compactness for $\{u_{\mu,\ell}\}$, while
\[
w'_{\mu,\ell}
=
-r^{N-1}
\left(
\frac{\lambda}{\mu}u_{\mu,\ell}
+
g(u_{\mu,\ell})
\right)
\]
gives compactness for $\{w_{\mu,\ell}\}$. Thus, up to subsequences,
\[
u_{\mu,\ell}\to u_{\infty,\ell},
\qquad
w_{\mu,\ell}\to w_{\infty,\ell}
\]
uniformly on $[0,R]$, and
\[
w'_{\infty,\ell}
=
-r^{N-1}g(u_{\infty,\ell}).
\]
On the other hand,
\[
\varphi(u'_{\mu,\ell})
=
\mu\frac{w_{\mu,\ell}}{r^{N-1}}.
\]
Since
\[
\varphi^{-1}(s)\rightarrow\operatorname{sign}(s)
\qquad\text{as }|s|\to+\infty,
\]
one obtains, at every point where $w_{\infty,\ell}\neq0$,
\[
u'_{\infty,\ell}
=
\operatorname{sign}(w_{\infty,\ell}).
\]
The singular limit is therefore governed by the system
\begin{equation}
\label{eq:limit-system}
\begin{cases}
u'_{\infty,\ell}
=
\operatorname{sign}(w_{\infty,\ell})
&
\text{a.e. on }\{w_{\infty,\ell}\neq0\},
\\[1mm]
w'_{\infty,\ell}
=
-r^{N-1}g(u_{\infty,\ell}),
&
r\in(0,R),
\end{cases}
\tag{1.3}
\end{equation}
together with
\[
w_{\infty,\ell}(0)=0,
\qquad
u_{\infty,\ell}(R)=0.
\]

From the viewpoint of dynamical systems, \eqref{eq:limit-system} is particularly suggestive: away from the set $\{w_{\infty,\ell}=0\}$, it is the dynamics associated with a planar vector field whose first component is $\operatorname{sign}(w_{\infty,\ell})$, and hence is discontinuous across the axis $w_{\infty,\ell}=0$. Although differential systems with discontinuous vector fields admit a general theory of their own, here the specific phase-plane structure of \eqref{eq:limit-system} is enough to obtain the information needed for the asymptotic analysis. In particular, the first equation explains directly the saturation phenomenon: wherever $w_{\infty,\ell}\neq0$, the limiting profile travels with the maximal admissible slope, namely $+1$ or $-1$.

The main issue is therefore to understand what may happen when $w_{\infty,\ell}=0$. A crucial step is to prove that, unless the limiting pair is identically zero,
\[
\bigl(u_{\infty,\ell}(r),w_{\infty,\ell}(r)\bigr)
\neq(0,0)
\qquad
\text{for every }r\in[0,R].
\]
The proof uses in an essential way the fact that the approximating solutions have a uniformly prescribed number of zeros: if a nontrivial limiting trajectory reached the origin, the structure of \eqref{eq:limit-system} would force arbitrarily many sign changes near that point, contradicting the fixed nodal count. Once the origin has been excluded, the zeros of $w_{\infty,\ell}$ can be controlled and the phase-plane rotation of the approximating trajectories passes to the limit. This shows both that
\[
|u'_{\infty,\ell}|=1
\quad\text{a.e. in }[0,R]
\]
and that $u_{\infty,\ell}$ has exactly the same number of zeros as $u_{\mu,\ell}$. The nontrivial limit is consequently a piecewise affine oscillatory profile.

The vanishing alternative requires an additional, more technical ingredient. In this case, we prove that $u'_{\mu,\ell}\to0$ uniformly on a terminal subinterval of $(0,R]$. The argument combines the geometry of the nodal intervals with a monotonicity formula for a suitable energy along radial solutions. This strengthened convergence will also play a crucial role in the construction of the large solutions below.

Theorem~\ref{thm:main-dichotomy} only determines the possible limiting behaviours. Our second main result shows that both alternatives actually occur and can be realized by suitably selecting, among the radial solutions furnished by the shooting argument of \cite{BoCoZi26}, a small and a large family.

\begin{theorem}
\label{thm:realization}
Assume that $(g_1)$, $(g_2)$ and $(g_3)$ hold, and let $\ell\geq1$. Then:
\begin{itemize}
\item
there exist $\bar\mu>0$ and a family
$\{u^{(l)}_{\mu,\ell}\}_{\mu>\bar\mu}$
of solutions of \eqref{eq:main}, with
$u^{(l)}_{\mu,\ell}(0)>0$
and exactly $\ell-1$ zeros in $(0,R)$, such that, up to a subsequence,
\[
u^{(l)}_{\mu,\ell}
\rightarrow
u_{\infty,\ell}
\qquad
\text{uniformly on }[0,R],
\]
where $u_{\infty,\ell}$ is as in alternative~(2) of Theorem~\ref{thm:main-dichotomy} (in particular, $u_{\infty,1}(r) = R-r$);

\item
if, moreover,
$\lambda<\lambda_\ell^{\mathrm{rad}}$,
there exist $\bar\mu>0$ and a family
$\{u^{(s)}_{\mu,\ell}\}_{\mu>\bar\mu}$
of solutions of \eqref{eq:main}, with
$u^{(s)}_{\mu,\ell}(0)>0$
and exactly $\ell-1$ zeros in $(0,R)$, such that
\[
u^{(s)}_{\mu,\ell}
\longrightarrow0
\qquad
\text{uniformly on }[0,R].
\]
\end{itemize}
\end{theorem}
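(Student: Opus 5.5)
Since Theorem~\ref{thm:main-dichotomy} is already available, the plan is to construct, for large $\mu$, two solutions with the prescribed nodal pattern. For the large one we must rule out alternative~(1); for the small one we must rule out alternative~(2). The shooting framework of \cite{BoCoZi26} is the starting point. For the radial Cauchy problem $u(0)=d>0$, $u'(0)=0$ we write $\theta_\mu(r;d)$ for the angular (rotation) coordinate in the $(u,v)$ phase plane. Solutions with exactly $\ell-1$ zeros then correspond to values of $d\in(0,R)$ at which $\theta_\mu(R;d)$ hits the value that encodes $u(R)=0$ after $\ell-1$ half-turns.

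For the \emph{small family} we assume $\lambda<\lambda_\ell^{\mathrm{rad}}$ and refine the forward shooting so that the selected $d$ tends to $0$. The steps are as follows.
\begin{enumerate}
\item For $d$ very small (depending on $\mu$), use $(g_2)$ with $\gamma>2$ and the fact that $\varphi(s)\sim s$ near $0$. These show that the solution is a small perturbation of the linear problem $-\Delta u=\lambda u$. Since $\lambda<\lambda_\ell^{\mathrm{rad}}$, it performs fewer than $\ell$ half-rotations on $[0,R]$.
\item Find a value $d_\mu$ at which the solution performs at least $\ell$ half-rotations, with $d_\mu\to0$ as $\mu\to\infty$. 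A natural choice is $d=\mu^{-\alpha}$ for suitable $\alpha>0$. With this choice the term $\mu g(u)\gtrsim \mu d^{\gamma-1}$ is large compared with the scale required to oscillate on an interval of length $R/\ell$, while the slope stays small, so the operator remains essentially linear. One checks this through a rescaling of the equation that makes $\mu g$ the dominant force.
\item Apply continuity of the rotation number in $d$ on $[\text{small},d_\mu]$. This yields $u^{(s)}_{\mu,\ell}$ with $u(0)\le d_\mu\to0$. The sup norm is attained near the origin, or is controlled by $d_\mu$ via energy monotonicity, so $\|u\|_\infty\to0$.
\end{enumerate}

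For the \emph{large family} we use backward shooting from $r=R$. We impose $u(R)=0$ and $u'(R)=-(-1)^{\ell-1}\beta$ with $\beta\in(0,1)$, integrate backwards, and seek $\beta$ such that $v(0)=0$ is reached after exactly $\ell-1$ zeros. We then restrict $\beta$ to a range $[\beta_0,1)$ with $\beta_0>0$ independent of $\mu$, for instance $\beta_0=1/2$. If the resulting solutions satisfied alternative~(1), then $u'_{\mu,\ell}\to0$ in $C^1([\bar r,R])$, which contradicts $|u'(R)|\ge\beta_0$. This is precisely where the strengthened $C^1$ convergence of alternative~(1) is used. Hence alternative~(2) holds. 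The existence of a suitable $\beta$ follows from two estimates on the backward rotation number.
\begin{enumerate}
\item At $\beta=\beta_0$ and $\mu$ large, the backward trajectory rotates too much, by at least $\ell-1$ plus a half-turn before reaching $r=0$. The reason is that the slope stays of order one, so $u$ reaches values of size comparable to $1$ in bounded time, and there the $\mu g(u)$ forcing makes $v$ turn quickly.
\item As $\beta\to1^-$, the initial value $|v(R)|=R^{N-1}\varphi(\beta)\to\infty$. Then $|u'|$ stays close to $1$ on all of $[0,R]$ for fixed $\mu$, and the rotation is less than one half-turn.
\end{enumerate}
Continuity in $\beta$, together with the singular behaviour of $r^{1-N}v$ at $r=0$, then gives the required $\beta_\mu$ with exactly $\ell-1$ zeros and $v(0)=0$.

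The main obstacle is the large-family estimate~(1) together with the regularity at $r=0$ in the backward problem. The factor $r^{N-1}$ degenerates at the origin, so one must make sure that the backward solution extends to $r=0$ with $v(0)=0$ and that the intermediate-value argument counts nodal zeros correctly. The first attempt would be to work with the angle in the $(u,w)$ plane, where $w=v/\mu$. Its monotonicity near zeros of $u$ and of $w$ gives a well-defined rotation count. The remaining step is to show uniform-in-$\mu$ lower bounds on the angular velocity away from a neighbourhood of $r=0$.
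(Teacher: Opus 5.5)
Your overall architecture matches the paper's: forward shooting with the intermediate value theorem for the small family, and backward shooting from $r=R$ for the large family, with a $\mu$-independent lower bound on $|v(R)|$ combined with the $C^1$ part of alternative~(1). But the two rotation estimates that drive the argument are, respectively, wrong and missing.

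\emph{Small family, Step 2.} You prescribe $u(0)=d=\mu^{-\alpha}$ in the small-slope regime and argue that $\mu g(u)\gtrsim\mu d^{\gamma-1}$ forces $\ell$ half-turns. This presupposes $|u|\gtrsim d$ along the orbit, which is false. The damping term $\frac{N-1}{r}u'$, i.e.\ the degeneracy of $r^{N-1}$ at the origin, makes the amplitude decay. Take $\lambda=0$ and $g(s)=|s|^{\gamma-2}s$. Then the rescaling you have in mind, $u(r)\approx d\,U(\sqrt{\mu d^{\gamma-2}}\,r)$, leads to the radial Lane--Emden problem $U''+\frac{N-1}{s}U'+|U|^{\gamma-2}U=0$, $U(0)=1$, $U'(0)=0$. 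By the Pohozaev identity, its solution never vanishes when $N\geq3$ and $\gamma\geq 2N/(N-2)$, a range allowed by $(g_2)$. So in that range your mechanism produces no rotation at all.

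The missing idea is to stay away from $r=0$. The paper does not prescribe $u(0)$. It uses continuity of $\alpha\mapsto\mu^{2/\gamma}u_\alpha(1/i)^2+v_\alpha(1/i)^2$ to place the orbit on the ellipse of size $\Gamma_i$ at $r=1/i$. On $[1/i,R]$, in the variables $t=\mu^{1/\gamma}r$, $x=\mu^{1/\gamma}u$, $y=v$, the system has coefficients bounded between $\mu$-independent autonomous ones. The spiral comparison of Proposition~\ref{Prop21} then gives rotation larger than $(i+1)\pi$ for $\mu\geq\widetilde\mu_i$ (Lemma~\ref{claimellipse}). Smallness of the initial value comes only from the $1$-Lipschitz bound, $\hat\alpha_i\leq\Gamma_i\mu^{-1/\gamma}+1/i$, with $i=i(\mu)\to+\infty$. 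Note that $\hat\alpha_i$ may be much larger than $\mu^{-1/\gamma}$. The selected solutions therefore need not lie in your small-slope regime near the origin, and in the supercritical range they cannot.

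\emph{Large family, estimate (1).} This is the crux, and it is left unproved with an incorrect heuristic. Going backward from $R$ with $|u'(R)|=\beta_0$, the forcing $\mu g(u)$ drives $v$ to zero after an $r$-length of order $\mu^{-1/\gamma}$. At that point $|u|$ is only of order $\mu^{-1/\gamma}$; it never reaches values comparable to $1$. The natural plane is therefore $(\mu^{1/\gamma}u,v)$ with time $t=\mu^{1/\gamma}r$, not $(u,v/\mu)$. In the $(u,w)$ plane the relevant orbits are ellipses with semi-axes of order $\mu^{-1/\gamma}$ and $\mu^{-1}$. Along them the angular velocity is $O(\mu^{2/\gamma-1})$ except near crossings of the $w$-axis, so no uniform lower bound of the kind you hope for exists. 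Moreover, a lower bound on angular velocity is useless unless the orbit is confined to an annulus for the required number of turns; that confinement is exactly what spiral trapping provides. The paper obtains (1) from Proposition~\ref{timeinverse}, the time-reversed version of Proposition~\ref{Prop21} via $(\tilde x,\tilde y)(t)=(y(-t),x(-t))$. It applies this on $[r_0,R]$ with terminal datum $|v(R)|=\Gamma_\ell$ (Lemma~\ref{etastar}). This $\Gamma_\ell$ plays the role of your $\beta_0$: it is independent of $\mu$, and small so that the local proposition applies.
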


It is tempting to interpret this small/large dichotomy also from a variational perspective. Indeed, for $\ell=1$ the two asymptotic regimes are precisely those exhibited in \cite{BoCoZi26} by the min--max solution and the global minimizer, respectively. Whether a similar variational interpretation can be given to the nodal families for $\ell\geq2$ is an interesting open question. In particular, one may ask whether the large nodal solutions admit some minimizing characterization, while the small ones can be detected through suitable higher min--max constructions.

Although Theorem~\ref{thm:realization} fits naturally with the terminology and the multiplicity picture of \cite{BoCoZi26}, it does not follow directly from the shooting construction developed there. That argument detects the existence of small and large shooting parameters by comparing their angular displacement, but it does not localize the corresponding solutions sharply enough to determine their asymptotic behaviour as $\mu\to+\infty$. A substantial part of the present work is therefore devoted to revisiting the shooting argument with this additional information in mind.

For the small solutions, we refine the shooting construction so as to select a parameter whose initial value tends to zero as $\mu\to+\infty$. Theorem~\ref{thm:main-dichotomy} then forces the corresponding family to fall into the vanishing alternative. The construction relies on a quantitative estimate ensuring a large angular displacement for suitable trajectories at an increasingly small scale. After a rescaling, this reduces to a rotation estimate for a class of planar Hamiltonian systems.

The selection of the large solutions is more delicate. Instead of shooting forward from $r=0$, we perform a \emph{backward shooting} from the boundary $r=R$, prescribing
\[
u(R)=0,
\qquad
v(R)=\eta.
\]
The parameter $\eta$ is then chosen so as to recover the regularity condition $v(0)=0$ and the required number of nodal regions. The construction allows us, at the same time, to keep the terminal momentum $v(R)$ away from zero as $\mu\to+\infty$. If the corresponding solutions converged uniformly to zero, the $C^1$-information in the first alternative of Theorem~\ref{thm:main-dichotomy} would instead imply
\[
u'_{\mu,\ell}(R)\longrightarrow0,
\]
which is incompatible with such a lower bound on $v(R)$. Hence the selected family necessarily realizes the nontrivial, saturated alternative of Theorem~\ref{thm:main-dichotomy}.

Some of the refined shooting estimates require a strengthened version of a technical rotation result already used in \cite{BoCoZi26}. Since this result concerns a fairly general class of planar systems and is independent of the main asymptotic arguments, we present it separately in Appendix~\ref{appendix1}.

The plan of the paper is as follows. In Section \ref{sec2} we recall the radial shooting formulation and the phase-plane notation. Section \ref{sec-dic} is devoted to the proof of the asymptotic dichotomy in Theorem~\ref{thm:main-dichotomy}, while in Section \ref{sec4} we prove Theorem \ref{thm:realization} by constructing the small and large nodal families. Finally, Section \ref{sec:num} collects some numerical simulations that illustrate both asymptotic behaviours and Appendix~\ref{appendix1} contains the technical rotation estimates used in the shooting arguments.

\medbreak
\noindent
\textbf{Acknowledgments.} The authors are very grateful to Andrea Tellini for his generous help with the numerical simulations. His explanations and suggestions were invaluable in clarifying how to perform the computations in a mathematically reliable way, and greatly improved our understanding of the numerical aspects of the problem. 
This work was carried out under the auspices of GNAMPA–INdAM.

\medbreak
\noindent
\textbf{Declaration on the use of AI tools.}
The mathematical ideas, strategies, results, and proofs presented in this work were developed entirely by the authors. ChatGPT 5.6 (OpenAI) was used as an auxiliary tool to assist in checking the presentation and internal consistency of some of the proofs, and in the linguistic and stylistic revision of the manuscript. All mathematical arguments and all AI-assisted suggestions were independently checked and validated by the authors, who take full responsibility for the content of the paper.

\section{Preliminaries}\label{sec2}

In this section, we collect the notation and the main tools that will be used throughout the paper. Most of the material is taken from our previous work \cite{BoCoZi26}, where existence and multiplicity results for radial solutions of \eqref{eq:main} were established using a shooting method.

\subsection{The radial shooting problem}

As in the Introduction, throughout the paper, we denote by $\mathcal B_R \subset \mathbb{R}^N$ the ball centered at the origin with radius $R >0$, where $N \geq 2$. We consider radial solutions of
\begin{equation}\label{eq:PDE}
-\operatorname{div}\left(
\frac{\nabla u}{\sqrt{1-|\nabla u|^2}}
\right)
=\lambda u+\mu g(u),
\qquad
u=0 \quad\text{on }\partial \mathcal{B}_R.
\end{equation}
Writing $u=u(r)$, problem \eqref{eq:PDE} reduces to
\begin{equation}\label{eq:radial}
\begin{cases}
-\displaystyle\left(
r^{N-1}\varphi(u')
\right)'
=
r^{N-1}\bigl(\lambda u+\mu g(u)\bigr),
\qquad r\in(0,R),
\\[2mm]
u'(0)=0,\qquad
u(R)=0,
\end{cases}
\end{equation}
where
$$
\varphi(s)=\frac{s}{\sqrt{1-s^2}},
\qquad |s|<1.
$$

Following the shooting approach introduced in \cite{BoCoZi26}, for every initial value
$\alpha\in\mathbb R$, we consider the Cauchy problem

\begin{equation}\label{system}
\begin{cases}
\begin{aligned}
u' &= \varphi^{-1}\!\left(\frac{v}{r^{N-1}}\right), && r \in (0, R),\\
v' &= -r^{N-1} ( \lambda u+ \mu g(u)), && r \in (0, R),\\
u(0) &= \alpha,\\
v(0) &= 0,
\end{aligned}
\end{cases}
\end{equation}
Under assumptions $(g_1)$ and $(g_2)$, by the Cauchy-Lipschitz theory for ODEs, problem
\eqref{system} admits a unique global solution
$(u_\alpha,v_\alpha)$ depending continuously on the initial datum $\alpha$. More precisely, the following result holds true.

\begin{lemma}[{\cite[Lemma 2.1, Remark 1]{BoCoNo20b}}]
\label{continuityalpha}
For any $\alpha \in \mathbb{R}$, problem \eqref{system} has a unique solution $(u_\alpha, v_\alpha)$ defined on $[0,R]$, which in particular satisfies $$ \lim_{r \to 0^+} \frac{v_\alpha (r)}{r^{N-1}} = 0,$$
that is, $v_\alpha(r) / r^{N-1}$ can always be extended continuously by $0$ at $r=0$.
Moreover, if $\alpha \to \bar{\alpha}$ , then
\begin{equation}
\label{unifalpha}
\begin{aligned}
    &u_{\alpha}(r) \to u_{\bar{\alpha}} (r) \quad &&\text{uniformly on } [0,R], \\
    &v_{\alpha}(r) \to v_{\bar{\alpha}} (r) \quad &&\text{uniformly on } [0,R],\\
    &\frac{v_\alpha(r)}{r^{N-1}} \to \frac{v_{\bar \alpha}(r)}{r^{N-1}} &&\text{uniformly on }[0,R].
\end{aligned}\end{equation}
\end{lemma}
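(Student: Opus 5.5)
The plan is to treat \eqref{system} as a regular Cauchy problem away from $r=0$ and to handle the singular point $r=0$ by an integral (fixed point) formulation. Near $r=0$ we use the equivalent integral system
\[
v(r)=-\int_0^r t^{N-1}\bigl(\lambda u(t)+\mu g(u(t))\bigr)\,dt,\qquad u(r)=\alpha+\int_0^r \varphi^{-1}\!\Bigl(\frac{v(t)}{t^{N-1}}\Bigr)dt .
\]
The key observation is that $|v(r)|/r^{N-1}\le \frac{r}{N}\sup_{[0,r]}|\lambda u+\mu g(u)|$. Hence the quotient $v/r^{N-1}$ is a bounded continuous function of $r$ vanishing at $0$. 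This gives at once the limit $\lim_{r\to0^+}v_\alpha(r)/r^{N-1}=0$ for any solution.

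\textbf{Local existence and uniqueness.} On a small interval $[0,\rho]$ we apply the contraction principle to the map $T(u)(r)=\alpha+\int_0^r\varphi^{-1}\bigl(-t^{1-N}\int_0^t s^{N-1}(\lambda u+\mu g(u))\,ds\bigr)dt$ on a ball of $C([0,\rho])$ around $\alpha$.
\begin{itemize}
\item $\varphi^{-1}(s)=s/\sqrt{1+s^2}$ is globally $1$-Lipschitz with values in $(-1,1)$.
\item $g$ is Lipschitz on bounded sets by $(g_1)$, with constant $L$.
\end{itemize}
Together these give $\|T u_1-Tu_2\|\le \frac{\rho^2}{2N}(\lambda+\mu L)\|u_1-u_2\|$, so $T$ is a contraction for small $\rho$. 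For $r\ge\rho$ the system is a regular ODE with a right-hand side that is locally Lipschitz in $(u,v)$, so standard Cauchy--Lipschitz theory extends the solution uniquely.

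\textbf{Global existence on $[0,R]$.} Blow-up is excluded because $|u'|<1$ gives $|u(r)|\le|\alpha|+R$. Then $v$ is bounded by integrating a bounded right-hand side, so the solution cannot escape to infinity before $R$.

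\textbf{Continuous dependence.} Fix $\bar\alpha$ and $\alpha$ in a bounded set. Both $u_\alpha$ and $u_{\bar\alpha}$ lie in $[-|\alpha|-R,|\alpha|+R]$, where $g$ has a uniform Lipschitz constant $L$. Set $z(r)=\sup_{[0,r]}|u_\alpha-u_{\bar\alpha}|$. The integral formulation gives
\[
\Bigl|\frac{v_\alpha}{r^{N-1}}-\frac{v_{\bar\alpha}}{r^{N-1}}\Bigr|\le \frac{r}{N}(\lambda+\mu L)\,z(r),\qquad z(r)\le|\alpha-\bar\alpha|+\int_0^r \frac{t}{N}(\lambda+\mu L)z(t)\,dt .
\]
Gronwall's inequality then yields $z(R)\le C|\alpha-\bar\alpha|$. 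Feeding this back into the two displayed estimates gives uniform convergence of $v_\alpha$ and of $v_\alpha/r^{N-1}$ on $[0,R]$, which is \eqref{unifalpha}.

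\textbf{Main obstacle.} The only delicate point is the singularity at $r=0$, and it is handled by dividing by $r^{N-1}$ only after integrating. The averaging estimate $|t^{1-N}\int_0^t s^{N-1}f|\le \frac tN\|f\|_\infty$ removes the singularity and makes every constant uniform down to $r=0$.
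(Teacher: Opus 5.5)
Your proof is correct. The paper gives no argument of its own here: it imports the lemma from \cite{BoCoNo20b} and only invokes Cauchy--Lipschitz theory. Your route is the standard way to make that citation self-contained: the integral reformulation with the averaging bound $|t^{1-N}\int_0^t s^{N-1}f\,ds|\le \frac{t}{N}\|f\|_\infty$ to handle $r=0$, a contraction near $0$, continuation via $|u'|<1$, and Gronwall. It even yields locally Lipschitz dependence on $\alpha$ using only $(g_1)$. The one step worth writing out is that every solution satisfies the integral system (integrate on $[\varepsilon,r]$ and let $\varepsilon\to0^+$) and satisfies $|u(r)-\alpha|\le r$, so it lies in the ball where $T$ is a self-map contraction; that is what turns uniqueness of the fixed point into uniqueness of the solution.
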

Moreover, by structure, one has that for every $u_\alpha$ solution of \eqref{system}, it holds that
$$
|u_\alpha'(r)|<1,
\qquad
r\in[0,R].
$$
Radial solutions of \eqref{eq:radial} are therefore obtained by selecting
those values of $\alpha$ satisfying
$$
u_\alpha(R)=0.
$$

\subsubsection{Polar coordinates}
\label{subsec:polarcoordinates}
Throughout the paper, we encode the nodal properties of the shooting
trajectory by means of suitable clockwise polar coordinates in the phase plane. 

Let $\alpha>0$, and let $(u_\alpha,v_\alpha)$ be the solution of \eqref{system}. Since $u_\alpha(0)=\alpha>0$, the trajectory does not start from the origin; hence, by uniqueness, $(u_\alpha(r), v_\alpha(r)) \neq (0,0)$ for every $r \in [0,R]$. We may therefore write
\[
u_\alpha(r)=\rho_\alpha(r)\cos\theta_\alpha(r),
 \qquad
 v_\alpha(r)=-\rho_\alpha(r)\sin\theta_\alpha(r),
\]
where $\rho_\alpha(r)>0$ and the continuous angular coordinate is normalized by
\[
 \theta_\alpha(0)=0.
\]
The minus sign in the second identity means that increasing values of $\theta_\alpha$ correspond to clockwise rotation.

As observed in \cite{BoCoZi26}, when $\lambda + \mu > 0$ assumption $(g_3)$ yields
\begin{equation} \label{thetaincreasing} \theta_\alpha'(r) > 0 \quad \text{for every } r\in (0,R],\end{equation}
namely, the trajectory $(u_\alpha,v_\alpha)$ moves clockwise in the phase plane. In particular, between any two consecutive zeros of $u_\alpha$ there is exactly one zero of $u_\alpha'$, and vice versa.
This allows us to characterize the nodal
properties of $u_\alpha$ in terms of the total angular variation
$\theta_\alpha(R)$. More precisely, if
\begin{equation}\label{nodaltheta}
\theta_\alpha(R)=\frac{\pi}{2}+(\ell-1)\pi,
\qquad \ell\ge1,
\end{equation}
then $u_\alpha(R)=0$, so that $u_\alpha$ is a solution of
\eqref{eq:radial}, having exactly $\ell-1$ zeros in $(0,R)$ and exactly one critical point between two consecutive zeros. 

We shall occasionally use rescaled polar coordinates $(\tilde\rho_\alpha,\tilde\theta_\alpha)$. More precisely, for two positive constants $\beta_1,\beta_2>0$, not simultaneously equal to 1, we may write
\begin{equation}\label{scaledpolar}
u_\alpha(r)=\beta_1\tilde\rho_\alpha(r)\cos\tilde\theta_\alpha(r),
    \qquad
    v_\alpha(r)=-\beta_2\tilde\rho_\alpha(r)\sin\tilde\theta_\alpha(r).
\end{equation}
These are the standard clockwise polar coordinates of the rescaled trajectory
\[
 r\longmapsto
(\tilde u_\alpha(r), \tilde v_\alpha(r))= \left(\frac{u_\alpha(r)}{\beta_1},
       \frac{v_\alpha(r)}{\beta_2}\right).
\]
For future use, we highlight here that, since a positive rescaling of the coordinate axes preserves the signs of both components, it preserves each quadrant, each coordinate half-axis, the orientation of the rotation, and all axis-crossing times.
More precisely, since $\theta_\alpha$ and $\tilde\theta_\alpha$ are continuous and normalized by
\[
\theta_\alpha(0)=\tilde\theta_\alpha(0)=0,
\]
for every $i\in\mathbb Z$,
\[
 \theta_\alpha(r)\in
 \left(i\frac{\pi}{2},(i+1)\frac{\pi}{2}\right)
 \quad\Longleftrightarrow\quad
 \tilde\theta_\alpha(r)\in
 \left(i\frac{\pi}{2},(i+1)\frac{\pi}{2}\right),
\]
and
\[
 \theta_\alpha(r)=i\frac{\pi}{2}
 \quad\Longleftrightarrow\quad
 \tilde\theta_\alpha(r)=i\frac{\pi}{2}.
\]
Moreover, a standard computation (cf. \cite[formula (3.8)]{BoCoZi26}) yields in $(0,R)$ 
\begin{equation}
    \label{eqtheta}        \tilde\theta'_\alpha(r) \frac{u'_\alpha v_\alpha - v_\alpha' u _\alpha}{\beta_1 \beta_2 \tilde \rho_\alpha^2} = \frac{\rho_\alpha^2}{\beta_1 \beta_2 \tilde \rho_\alpha^2} \theta'_\alpha >0
\end{equation}
Therefore, also for the rescaled coordinates we can conclude that $\tilde \theta'_\alpha>0$ in $(0,R]$ if $\lambda+\mu>0$. 
It follows that positive rescalings of the axes do not affect the number or the order of the zeros and critical points of $u_\alpha$, nor the rotation count encoded by the successive crossings of the coordinate axes.

\section{Proof of Theorem \ref{thm:main-dichotomy}}
\label{sec-dic}

In this section, we consider problem \eqref{eq:radial} for $\lambda, \mu \geq 0$ and we provide the proof of Theorem \ref{thm:main-dichotomy}. A few comments on the statement and the proof are in order. 

In alternative (1), we expect the convergence $u_\mu' \to 0$ to be uniform on the whole interval $[0,R]$. However, since this fact will only be used as a technical (albeit crucial) ingredient in the arguments of Section \ref{largesols}, we restrict ourselves to proving the weaker statement that $u_\mu' \to 0$ uniformly on $[\bar r,R]$ for some $\bar r \in (0,R)$, which is sufficient for our purposes. The proof of even this weaker result is rather technical.

For this reason, and in order to improve readability, we divide the proof of Theorem \ref{thm:main-dichotomy} into two parts. In the first part, presented immediately below, we establish the dichotomy between alternatives (1) and (2), without addressing the additional convergence property of $u_\mu'$ in case (1). The latter requires several technical results, which will be proved in the remainder of the proof.

\begin{proof}[Proof of Theorem \ref{thm:main-dichotomy} - first part]
Let us first recall that, due to the bound $|u_\mu'| < 1$, it holds that $|u_\mu(r)| \leq R$ for every $r \in [0,R]$ and for every $\mu > \bar\mu$. Then, as a direct consequence of the Arzelà-Ascoli theorem, we have, up to subsequence, $u_\mu \to u_\infty$ uniformly, with $u_\infty$ a Lipschitz continuous function with $|u_\infty'(r)| \leq 1$ for a.e. $r \in [0,R]$. Moreover, using again the bound $|u_\mu'| < 1$, the sequential version of the Banach-Alaoglu theorem yields $u_\mu' \to u_\infty'$ in the weak-$*$ topology of $L^\infty$.

On the other hand, recalling $v_\mu(r)=r^{N-1}\varphi(u_\mu'(r))$ as in \eqref{system}, let us define \begin{equation} \label{def:w}w_\mu(r):= \frac{v_\mu(r)}{\mu}=-\int_0^r s^{N-1} \left(\frac{\lambda}{\mu} u_\mu(s) + g(u_\mu(s)) \right) ds.\end{equation}
Using the uniform boundedness of $u_\mu$, it is easily seen that both $w_\mu$ and $w_\mu'$ are uniformly bounded, as well. Therefore, using the Arzelà--Ascoli theorem again, we have that, up to a further subsequence, $w_\mu \to w_\infty$ uniformly in $[0,R]$. From \eqref{def:w} and the uniform convergence of $\{u_\mu\}_{\mu\geq \bar \mu}$ we deduce that, in fact,
    $$w_\infty(r) =- \int_0^r s^{N-1} g(u_\infty(s)) ds,$$
    and so $w_\infty \in C^1([0,R])$. Moreover, by the definition of $v_\mu$ we have for $r \in (0,R]$
    $$\varphi(u_\mu'(r)) = \mu \frac{w_\mu(r)}{r^{N-1}},$$
    which, for $\mu \to +\infty$, goes pointwise to $- \infty$ if $w_\infty(r) < 0$, and to $+ \infty$ if $w_\infty(r) > 0$. Hence, $u_\mu'(r) \to -1$ and $+1$ respectively, that is, equivalently, $u_\mu'(r) \to \operatorname{sign}(w_\infty(r))$ pointwise on the open set $\{w_\infty(r) \neq 0\}$. By uniqueness of the limit, it follows that
    $$u_\infty'=\operatorname{sign}(w_\infty) \text{ a.e. in } \{w_\infty \not = 0\}.$$
Summing up, we have thus proved that the pair $(u_\infty, w_\infty)$ satisfies
    \begin{equation}
    \label{limitproblem}
        \begin{cases}
            \begin{aligned}
                u_\infty' &= \operatorname{sign}(w_\infty) &&\text{a.e. in } \{w_\infty \not = 0\}, \\
                w_\infty' &= -r^{N-1} g(u_\infty) && r\in (0,R).
            \end{aligned}
        \end{cases}
    \end{equation}
    
Of course, the pair $(u_\infty, w_\infty) \equiv (0,0)$ is a solution of \eqref{limitproblem}: in this case, we are in the first alternative given in the statement of the theorem and what has still to be proved is that $u_\mu' \to 0$ uniformly on $[\bar{r},R]$ for some $\bar r \in (0,R)$. This is rather technical, and it will be postponed to the second part of the proof.

On the other hand, let us assume that $u_\infty \not\equiv 0$. In this case, we first claim that
\begin{equation}\label{non-zero}
u_\infty^2(r) + w^2_\infty(r) \not = 0 \quad \text{for every } r \in [0,R].
\end{equation}
Suppose by contradiction that
$$
u_\infty(r_0)=w_\infty(r_0)=0
$$
for some $r_0\in[0,R]$. Let
$$
Z:=\{r\in[0,R]:(u_\infty(r),w_\infty(r))=(0,0)\},
$$
and let $I$ be the connected component of $Z$ containing $r_0$. Since the pair $(u_\infty,w_\infty)$ is not identically zero, $I\neq[0,R]$. Hence at least one endpoint of $I$ has points of $[0,R]\setminus Z$ arbitrarily close to it. Assume, without loss of generality, that this happens at the right endpoint $a<R$ of $I$. Thus
$$
u_\infty(a)=w_\infty(a)=0,
$$
while $(u_\infty,w_\infty)$ is not identically zero on any interval $[a,a+\varepsilon]$. We claim that $u_\infty$ takes both positive and negative values in every interval $(a,a+\varepsilon)$. Suppose otherwise. Then, for some $b>a$, either
$$
u_\infty\geq0\quad\text{on }[a,b] \quad 
\text{or} \quad 
u_\infty\leq0\quad\text{on }[a,b].
$$
We consider the first case, the second one being completely analogous. By assumption $(g_3)$,
$$
g(u_\infty)\geq0\quad\text{on }[a,b],
$$
and hence, by \eqref{limitproblem},
$$
w_\infty'\leq0\quad\text{on }[a,b].
$$
Since $w_\infty(a)=0$, from the latter we deduce $w_\infty\leq0$ on $[a,b]$. We show that in fact $w_\infty\equiv0$ on $[a,b]$. Otherwise, let $(c,d)$ be a connected component of the open set
$$
\{r\in(a,b):w_\infty(r)<0\}.
$$
Assume first that $c> a$: by continuity, $w_\infty(c)=0$. Moreover, since $c \in (a,b)$, the fact that $w_\infty\leq0$ implies that $c$ is a local maximum of $w_\infty$, and hence $w_\infty'(c)=0$.  By \eqref{limitproblem} and $(g_3)$, this gives $u_\infty(c)=0$. On the other hand, as stated before, on $(c,d)$ we have $w_\infty<0$, and therefore
$$
u_\infty'=-1
\qquad\text{a.e. on }(c,d).
$$
It follows that
$$
u_\infty(r)=-(r-c)<0
\qquad\text{for }r\in(c,d),
$$
contradicting $u_\infty\geq0$ on $[a,b]$. Now, assume that $c= a$: In this case, since we have $u_\infty' =-1$ a.e. in $(a,d)$, we deduce 
$$u_\infty(r) = -(r-a) < 0 \quad \text{in } (a,d),$$
also contradicting that $u_\infty \geq 0$ in $[a,b]$. Therefore the set $\{ r \in (a,b) : w_\infty(r) < 0\}$ is empty, and hence $w_\infty\equiv 0$ on $[a,b]$. Then $w_\infty'\equiv0$, and the second equation in \eqref{limitproblem}, together with $(g_3)$, yields
$$
u_\infty\equiv0\quad\text{on }[a,b],
$$
contradicting the choice of $a$.

In conclusion, $u_\infty$ takes both positive and negative values in every right neighbourhood of $a$. We may consequently choose $\ell+1$ points
$$
a<r_1<r_2<\cdots<r_{\ell+1}<R
$$
such that
$$
u_\infty(r_j)u_\infty(r_{j+1})<0,
\qquad j=1,\ldots,\ell.
$$
Indeed, such a finite sequence can be extracted from a strictly decreasing sequence of points converging to $a$ from the right, where $u_\infty$ assumes alternating signs, and then conveniently relabeled. Since $u_\mu\to u_\infty$ uniformly, for all sufficiently large $\mu$ we have simultaneously
$$
u_\mu(r_j)u_\mu(r_{j+1})<0,
\qquad j=1,\ldots,\ell.
$$
Hence $u_\mu$ has at least one zero in each interval
$$
(r_j,r_{j+1}),\qquad j=1,\ldots,\ell,
$$
and thus at least $\ell$ zeros in $(0,R)$. This contradicts the assumption that $u_\mu$ has exactly $\ell-1$ zeros. The case in which the relevant endpoint of $I$ is approached from the left is completely symmetric. This proves \eqref{non-zero}.

As a consequence of \eqref{limitproblem} and \eqref{non-zero},
$w_\infty$ has only finitely many zeros. Indeed, since $u_\mu(0)>0$
and $u_\mu\to u_\infty$ uniformly, we have $u_\infty(0)\geq0$.
Moreover, $w_\infty(0)=0$ and \eqref{non-zero} imply
$u_\infty(0)\neq0$, and hence $u_\infty(0)>0$. Therefore, by
continuity, $u_\infty(r)>0$ for $r>0$ sufficiently small, and thus,
by \eqref{limitproblem} and $(g_3)$, $w_\infty$ is strictly
decreasing on a right neighborhood of zero. Hence $r=0$ is an
isolated zero of $w_\infty$. Moreover, since $u_\infty(R)=0$,
\eqref{non-zero} gives $w_\infty(R)\neq0$. Thus, if $w_\infty$ had
infinitely many zeros, they would admit an accumulation point
$\bar r\in(0,R)$. Since $w_\infty\in C^1([0,R])$, we would have
$w_\infty(\bar r)=w_\infty'(\bar r)=0.$
Then, from the second equation in \eqref{limitproblem} and $(g_3)$,
we would obtain $u_\infty(\bar r)=0$, contradicting
\eqref{non-zero}.

Now, again using \eqref{non-zero}
we can write the planar path $(u_\infty,w_\infty)$ in polar coordinates as
$$
\begin{cases}
u_\infty(r) = \rho_\infty(r) \cos\theta_\infty(r) \\
w_\infty(r) = -\rho_\infty(r) \sin \theta_\infty(r),
\end{cases}
$$
where $\rho_\infty(r) > 0$ and $\theta_\infty(0) = 0$. Moreover, $\theta_\infty$ is differentiable outside the (finite) set of zeros of $w_\infty$, with
\begin{equation} \label{theta-derivative} \begin{aligned}
    \theta_\infty'(r) &= \left (\frac{u_\infty'(r) w_\infty(r) - w_\infty'(r) u_\infty(r)}{u_\infty^2(r) + w_\infty^2(r)} \right) \\
    &= \left( \frac{ \operatorname{sign}(w_\infty(r) ) w_\infty(r) + r^{N-1} g(u_\infty(r)) u_\infty(r)}{u_\infty^{2} (r) + w_\infty^2 (r) }\right) > 0.\end{aligned}
\end{equation}

Now, since $(u_\mu,w_\mu)$ converges uniformly to $(u_\infty,w_\infty)$
and, by \eqref{non-zero}, the limiting path
$(u_\infty,w_\infty)$ does not meet the origin, for $\mu$ sufficiently
large the paths $(u_\mu,w_\mu)$ and $(u_\infty,w_\infty)$ are homotopic
in $\mathbb{R}^2\setminus\{0\}$ through the straight-line homotopy.
Moreover, their endpoints lie on the same rays, and therefore they have
the same angular variation. Hence
\[
\theta_\infty(R)-\theta_\infty(0)
=
\Delta\arg(u_\mu,w_\mu),
\]
where by $\Delta \arg(\cdot, \cdot)$ we mean the clockwise angular variation. On the other hand, the paths $(u_\mu,v_\mu)$ and $(u_\mu,w_\mu)$ also
have the same angular variation. Indeed, since $w_\mu=v_\mu/\mu$ and
$\mu>0$, they are joined in $\mathbb{R}^2\setminus\{0\}$ by the homotopy
\[
H_\mu(s,r)
=
\left(
u_\mu(r),
\left((1-s)+\frac{s}{\mu}\right)v_\mu(r)
\right),
\qquad (s,r)\in[0,1]\times[0,R].
\]
The endpoints remain on the same rays throughout this homotopy.
Consequently,
\[
\theta_\infty(R)-\theta_\infty(0)
=
\Delta\arg(u_\mu,v_\mu)
=
\frac{\pi}{2}+(\ell-1)\pi.
\]
Since $\theta_\infty(0)=0$, we conclude that
\begin{equation} \label{total-angle}
\theta_\infty(R)
=
\frac{\pi}{2}+(\ell-1)\pi.
\end{equation}

Since by \eqref{theta-derivative} the path moves clockwise, from
\eqref{limitproblem} and \eqref{total-angle} one deduces that
$u_\infty$ has exactly $\ell-1$ zeros in $(0,R)$, is strictly
decreasing from $r=0$ to its first zero, and has exactly one local
extremum between any two consecutive zeros. Finally, since
$w_\infty$ has only finitely many zeros, the set
$\{w_\infty\neq0\}$ has finitely many connected components. On each
of them, $w_\infty$ has constant sign and, by \eqref{limitproblem},
\[
u_\infty'=\operatorname{sign}(w_\infty)=\pm1
\qquad\text{a.e.}
\]
Hence $u_\infty$ is affine on each such component, and therefore
piecewise affine on $[0,R]$, with $|u_\infty'|=1$ almost everywhere.
This concludes the first part of the proof.
\end{proof}
As already mentioned, the rest of the section will be devoted to proving the additional information concerning the convergence $u_\mu' \to 0$ in the case (1). For this, we first need to define, following the ideas in \cite{CaDaZa01}, a notion of energy along a solution $u_\mu$ as
\begin{equation}
\label{energy}
E_\mu(r) 
= \frac{1}{\sqrt{1-u_\mu'(r)^2}} - 1 + \frac{\lambda}{2} u_\mu(r)^2 + \mu G(u_\mu(r)),
\end{equation}
where $G(s) = \int_0^s g(\xi)\,d\xi$.
Note that $E_\mu(r) \geq 0$ for every $r \in [0,R]$; moreover, we claim that
\begin{lemma}
\label{decre}
    $E_\mu(r)$ is non-increasing in $[0,R]$.
\end{lemma}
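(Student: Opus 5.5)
We differentiate $E_\mu$ along the solution and show that its derivative is a nonpositive friction term coming from the factor $r^{N-1}$. On $(0,R]$ the solution $u_\mu$ is $C^1$, and $v_\mu = r^{N-1}\varphi(u_\mu')$ is $C^1$. Since $\varphi^{-1}$ is smooth, $u_\mu' = \varphi^{-1}(v_\mu/r^{N-1})$ is $C^1$ on $(0,R]$. The first step is the elementary identity
\[
\frac{d}{dr}\left(\frac{1}{\sqrt{1-s^2}}\right)\Big|_{s=u_\mu'} u_\mu''
= \frac{u_\mu'\,u_\mu''}{(1-u_\mu'^2)^{3/2}}
= u_\mu'\,\bigl(\varphi(u_\mu')\bigr)',
\]
which uses $\varphi'(s)=(1-s^2)^{-3/2}$. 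It follows that
\[
E_\mu'(r) = u_\mu'\Bigl[\bigl(\varphi(u_\mu')\bigr)' + \lambda u_\mu + \mu g(u_\mu)\Bigr].
\]

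Next we expand the radial equation \eqref{eq:radial}, $-(r^{N-1}\varphi(u_\mu'))' = r^{N-1}(\lambda u_\mu+\mu g(u_\mu))$, for $r>0$:
\[
\bigl(\varphi(u_\mu')\bigr)' + \lambda u_\mu + \mu g(u_\mu) = -\frac{N-1}{r}\,\varphi(u_\mu').
\]
Substituting this gives
\[
E_\mu'(r) = -\frac{N-1}{r}\,u_\mu'\,\varphi(u_\mu') = -\frac{N-1}{r}\,\frac{u_\mu'^2}{\sqrt{1-u_\mu'^2}}\le 0
\qquad\text{for } r\in(0,R].
\]
Hence $E_\mu$ is non-increasing on $(0,R]$.

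The only delicate point is $r=0$, where the equation is singular and $u_\mu''$ may not be defined a priori. We handle it by continuity. By Lemma~\ref{continuityalpha}, $v_\mu(r)/r^{N-1}\to 0$ as $r\to0^+$, so $u_\mu'(r)\to 0=u_\mu'(0)$. Therefore $E_\mu$ is continuous on $[0,R]$. Given $0<r_1<r_2$, we have $E_\mu(r_2)\le E_\mu(r_1)$; letting $r_1\to0^+$ yields $E_\mu(r_2)\le E_\mu(0)$. This extends the monotonicity to the closed interval $[0,R]$.

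I expect no real obstacle in this argument. The only care needed is the regularity near $r=0$, which the continuity statement in Lemma~\ref{continuityalpha} settles.
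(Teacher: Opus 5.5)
Your proposal is correct and follows essentially the same route as the paper: differentiate $E_\mu$, eliminate the second-order term using the radial equation, and obtain $E_\mu'(r) = -\frac{N-1}{r}\frac{u_\mu'^2}{\sqrt{1-u_\mu'^2}} \le 0$. The paper substitutes an explicit formula for $u_\mu''$, whereas you work with $(\varphi(u_\mu'))'$, which is an equivalent bookkeeping choice. Your continuity argument at $r=0$, based on $v_\mu(r)/r^{N-1}\to0$, is a small point of care that the paper leaves implicit.
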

\begin{proof}[Proof of the lemma]
 Indeed, differentiating with respect to $r$ in \eqref{energy} we obtain
$$
E_\mu'(r)
= -\frac{1}{2}
\frac{-2u_\mu'(r) u_\mu''(r) }{(1-u_\mu'(r)^2)^{3/2}}
+ \left[\lambda u_\mu(r) + \mu g(u_\mu(r))\right]u_\mu'(r),
$$
that is
$$
E_\mu'(r)
= u_\mu'(r) 
\left[
\frac{u_\mu''(r) }{(1-u_\mu'(r)^2)^{3/2}}
+ \lambda u_\mu(r) + \mu g(u_\mu(r))
\right].
$$
Replacing in the above formula the expression for $u_\mu''$ coming from the equation, that is,
$$-u_\mu''(r) = (\lambda u_\mu(r) + \mu g(u_\mu(r)))(1-u_\mu'(r)^2)^{3/2} + \frac{(N-1)}{r} u_\mu'(r)(1-u_\mu'(r)^2),$$
we finally obtain
\begin{equation}
\label{denergy}
E_\mu'(r)
= -\frac{N-1}{r}
\frac{u_\mu'(r)^2}{\sqrt{1-u_\mu'(r)^2}}
\le 0,
\end{equation}
which concludes the proof. \end{proof}

With this result, we can proceed with the second part of the proof of Theorem \ref{thm:main-dichotomy}.
\begin{proof}[Proof of Theorem \ref{thm:main-dichotomy} - second part.] As said before, the rest of the proof will be devoted to proving the additional information concerning the convergence $u_\mu' \to 0$ in the case (1). Thus, we have to prove that, if $u_\infty \equiv 0$, then $u_\mu' \to 0$ uniformly on $[\bar r,R]$ for some $\bar r \in (0,R).$ We first observe that this follows if we show that
\begin{equation}\label{stimagmu}
\mu \Vert g(u_\mu) \Vert_{L^\infty(\bar r,R)} = O(1)
 \qquad \mu\to+\infty.
\end{equation}
Indeed, from equation \eqref{eq:radial} we obtain
for $r \in (0, R]$,
\begin{equation}\label{formulau''}
-u_\mu''(r) = (\lambda u_\mu(r) + \mu g(u_\mu(r)))(1-u_\mu'(r)^2)^{3/2} + \frac{(N-1)}{r} u_\mu'(r)(1-u_\mu'(r)^2)
\end{equation}
and so from the bound \eqref{stimagmu} we immediately get
$$
\Vert u_\mu'' \Vert _{L^\infty(\bar r,R)} = O(1)
  \quad\text{as } \mu\to+\infty.
$$
Then, by the Arzelà--Ascoli theorem, $u_\mu'$ converges uniformly on $[\bar r, R]$ to a continuous function which, since $u_\infty\equiv 0$, must be the trivial one.

So, let us prove \eqref{stimagmu}. Recalling the discussion in Section \ref{subsec:polarcoordinates}, we can order the zeros $\{r_\mu^i\}_{i=1}^\ell$ and the critical points $\{s_\mu^i\}_{i=0}^{\ell-1}$ of $u_\mu$ as follows
\[
  0 = s^0_\mu < r^1_\mu < s^1_\mu < r^2_\mu < \cdots
  < r^{\ell-1}_\mu < s^{\ell-1}_\mu < r^\ell_\mu = R,
\]
thus producing $2\ell-1$ consecutive intervals whose lengths sum to $R$. By the pigeonhole principle, we deduce that, for every $\mu$, there exists at least one interval of the type $[r_\mu^i,s_\mu^i]$ or $[s_\mu^i,r_\mu^{i+1}]$ having length greater than or equal to $R/(2\ell-1)$. Hence, for every $\mu$ there exists an index $i=i(\mu)$ such that
\begin{equation}
\label{liminf}
r_\mu^{i+1}-s_\mu^i \geq \frac{R}{2\ell-1}
\quad\text{or}\quad
s_\mu^i-r_\mu^i \geq \frac{R}{2\ell-1}.
\end{equation}
Set
\[
c_0:=\frac{R}{2(2\ell-1)}
\]
and fix $\delta_0\in(0,c_0/4)$. For every $\mu$, we select one of the intervals satisfying \eqref{liminf} and define
\[
J_\mu=[a_\mu,b_\mu]
\]
by setting
\[
J_\mu=[s_\mu^i+\delta_0,r_\mu^{i+1}-\delta_0]
\]
if the first alternative in \eqref{liminf} holds, and
\[
J_\mu=[r_\mu^i+\delta_0,s_\mu^i-\delta_0]
\]
if the second one holds. Notice that, in both cases,
\begin{equation}
\label{Jlength}
b_\mu-a_\mu\geq 2c_0-2\delta_0>c_0
\end{equation}
and $a_\mu\geq\delta_0$. Moreover, $u_\mu$ has constant sign on the nodal interval containing $J_\mu$. We denote such a sign by $\varepsilon_\mu\in\{-1,1\}$, so that
\[
\varepsilon_\mu u_\mu>0
\]
on $J_\mu$. In this way, the four possible configurations, corresponding to the two possible signs of $u_\mu$ and to the two possible alternatives in \eqref{liminf}, can be treated at the same time. Importantly, all the constants appearing in the estimates below can be chosen independently of the particular index $i=i(\mu)$ and of the interval selected in \eqref{liminf}. From now on, in order to lighten the notation, we will simply write $i$ in place of $i(\mu)$.

With this in mind, we first prove two technical claims. 

\begin{claim}
\label{claim1}
There exists $K_1>0$, independent of $\mu$, such that, for every $\mu$,
\begin{equation}
\label{claim1:largesols}
|u_\mu'(r)|\leq K_1|u_\mu(r)|,
\quad \text{for every }r\in J_\mu.
\end{equation}
\end{claim}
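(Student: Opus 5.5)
The plan is to get the bound from the monotonicity of $|v_\mu|$ on a single monotone branch of $u_\mu$, combined with the fundamental theorem of calculus over a subinterval of fixed length $\delta_0$ that reaches the zero adjacent to $J_\mu$. The margin $\delta_0$ in the definition of $J_\mu$ guarantees that such a subinterval stays inside the branch. This should give $K_1$ depending only on $\delta_0$, $R$ and $N$. I do not expect to need the assumption $u_\infty\equiv0$ here; only the sign structure of \eqref{system} and $(g_3)$ enter. By the symmetry $u\mapsto-u$, I will treat $\varepsilon_\mu=1$ and handle the two alternatives in \eqref{liminf} separately.

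\emph{Case $J_\mu\subset[r^i_\mu,s^i_\mu]$.} Here $u_\mu>0$ is increasing, so $v_\mu\ge0$. Since $v_\mu'=-r^{N-1}(\lambda u_\mu+\mu g(u_\mu))<0$ by $(g_3)$, $v_\mu$ is decreasing there. Take $r\in J_\mu$ and $t\in[r-\delta_0,r]$; this interval lies in $[r^i_\mu,r]$ because $a_\mu=r^i_\mu+\delta_0$. Then
\[
\varphi(u_\mu'(t))=\frac{v_\mu(t)}{t^{N-1}}\ge\frac{v_\mu(r)}{r^{N-1}}=\varphi(u_\mu'(r)),
\]
and since $\varphi$ is increasing, $u_\mu'(t)\ge u_\mu'(r)\ge0$. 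Because $u_\mu\ge0$ on $[r-\delta_0,r]$, integrating gives $u_\mu(r)\ge\int_{r-\delta_0}^r u_\mu'\ge\delta_0\,u_\mu'(r)$.

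\emph{Case $J_\mu\subset[s^i_\mu,r^{i+1}_\mu]$.} Here $u_\mu>0$ is decreasing and vanishes at $r^{i+1}_\mu$, while $v_\mu\le0$ is decreasing, so $|v_\mu|$ is increasing. Take $r\in J_\mu$ and $t\in[r,r+\delta_0]$; this interval lies in $[r,r^{i+1}_\mu]$ because $b_\mu=r^{i+1}_\mu-\delta_0$. Then
\[
\varphi(|u_\mu'(t)|)=\frac{|v_\mu(t)|}{t^{N-1}}\ge\Big(\frac{r}{t}\Big)^{N-1}\varphi(|u_\mu'(r)|)\ge c\,\varphi(|u_\mu'(r)|),\qquad c:=\Big(\frac{\delta_0}{R}\Big)^{N-1}\in(0,1),
\]
using $r\ge a_\mu\ge\delta_0$ and $t\le R$. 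The map $\varphi^{-1}$ is concave on $[0,\infty)$ and vanishes at $0$, so $\varphi^{-1}(cs)\ge c\,\varphi^{-1}(s)$ for $s\ge0$, and therefore $|u_\mu'(t)|\ge c\,|u_\mu'(r)|$. Since $u_\mu(r^{i+1}_\mu)=0$ and $u_\mu$ is monotone on $[r,r^{i+1}_\mu]$, we get $u_\mu(r)=\int_r^{r^{i+1}_\mu}|u_\mu'|\ge\int_r^{r+\delta_0}|u_\mu'|\ge c\,\delta_0\,|u_\mu'(r)|$.

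Both cases give \eqref{claim1:largesols} with $K_1=(c\,\delta_0)^{-1}=R^{N-1}\delta_0^{-N}$, which is independent of $\mu$ and of the index $i$. The only genuinely delicate point is the second case. There the weight $r^{N-1}$ works against the monotonicity of $|v_\mu|$, and we need the lower bound $r\ge\delta_0$ together with the concavity of $\varphi^{-1}$ to convert the bound on $\varphi(|u_\mu'|)$ into a bound on $|u_\mu'|$ itself.
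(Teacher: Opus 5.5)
Your proof is correct and follows the paper's argument. On the decreasing branch $[s^i_\mu,r^{i+1}_\mu]$ you use the same monotonicity of $r^{N-1}\varphi(|u_\mu'|)$ and the same concavity inequality for $\varphi^{-1}$. On the increasing branch you show $|u_\mu'|$ is decreasing and integrate from the zero $r^i_\mu$; the only cosmetic difference is that the paper reads off this monotonicity from the formula for $u_\mu''$, whereas you get it from the monotonicity of $v_\mu/r^{N-1}$. Your constant $R^{N-1}\delta_0^{-N}$ equals the paper's $\max\{R^{N-1}\delta_0^{-N},\delta_0^{-1}\}$.
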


\begin{proof}[Proof of the Claim]
We distinguish the two possible alternatives in \eqref{liminf}.

\noindent
Assume first that the selected interval is $[s_\mu^i,r_\mu^{i+1}]$. In this case,
\[
\varepsilon_\mu u_\mu'<0
\qquad\text{in }(s_\mu^i,r_\mu^{i+1}).
\]
Since $\varphi$ is an increasing odd function, and from the equation, the map $r\mapsto -\varepsilon_\mu r^{N-1}\varphi(u_\mu'(r)) =r^{N-1}\varphi(|u_\mu'(r)|)$ is positive and increasing on this interval. Therefore, fixing any
$r\in J_\mu$, for every $\xi\in[r,r_\mu^{i+1}]$ we have
\[
\xi^{N-1}\varphi(|u_\mu'(\xi)|)
\geq
r^{N-1}\varphi(|u_\mu'(r)|).
\]
Applying $\varphi^{-1}$ to both sides and using the elementary convexity inequality
\[
\varphi^{-1}(\theta\varphi(s))\geq\theta s,
\quad \text{for every }s\in[0,1)\text{ and }\theta\in[0,1],
\]
together with $\delta_0\leq r\leq\xi\leq r_\mu^{i+1}\leq R$, we obtain
\[
|u_\mu'(\xi)|
\geq
\left(\frac{r}{\xi}\right)^{N-1}|u_\mu'(r)|
\geq
\left(\frac{\delta_0}{R}\right)^{N-1}|u_\mu'(r)|.
\]
Now, using $u_\mu(r_\mu^{i+1})=0$ and integrating from $r$ to $r_\mu^{i+1}$, we get
\[
\begin{aligned}
|u_\mu(r)|
&=
\int_r^{r_\mu^{i+1}}|u_\mu'(\xi)|\,d\xi \geq
\left(\frac{\delta_0}{R}\right)^{N-1}
(r_\mu^{i+1}-r)|u_\mu'(r)|\geq
\left(\frac{\delta_0}{R}\right)^{N-1}
\delta_0|u_\mu'(r)|.
\end{aligned}
\]
Therefore,
\[
|u_\mu'(r)|
\leq
\frac{R^{N-1}}{\delta_0^N}|u_\mu(r)|.
\]

Assume now that the selected interval is $[r_\mu^i,s_\mu^i]$. In this case,
\[
\varepsilon_\mu u_\mu'>0
\qquad\text{in }(r_\mu^i,s_\mu^i).
\]
Recalling \eqref{formulau''}, we have
\[
-u_\mu''
=
(\lambda u_\mu+\mu g(u_\mu))(1-(u_\mu')^2)^{3/2}
+\frac{N-1}{r}u_\mu'(1-(u_\mu')^2).
\]
Both terms on the right-hand side have sign $\varepsilon_\mu$. Hence,
\[
\varepsilon_\mu u_\mu''<0
\]
on $(r_\mu^i,s_\mu^i)$ and therefore $|u_\mu'|$ is decreasing on this interval. Thus, for every $r\in J_\mu$ and every $\xi\in[r_\mu^i,r]$,
\[
|u_\mu'(\xi)|\geq |u_\mu'(r)|.
\]
Using $u_\mu(r_\mu^i)=0$, we obtain
\[
\begin{aligned}
|u_\mu(r)|
&=
\int_{r_\mu^i}^r|u_\mu'(\xi)|\,d\xi \geq
(r-r_\mu^i)|u_\mu'(r)| \geq
\delta_0|u_\mu'(r)|.
\end{aligned}
\]
The conclusion follows in both cases, by taking, for instance,
\[
K_1=
\max\left\{
\frac{R^{N-1}}{\delta_0^N},
\frac1{\delta_0}
\right\}.
\]
\end{proof}

\begin{claim}
\label{claim2}
There exists $K_2>1$, independent of $\mu$, such that, for every $\mu$,
\begin{equation}
\label{claim2:largesols}
K_2^{-1}|u_\mu(a_\mu)|
\leq |u_\mu(r)|
\leq K_2|u_\mu(a_\mu)|,
\quad \text{for every }r\in J_\mu.
\end{equation}
\end{claim}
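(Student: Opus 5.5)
The natural route is to integrate the logarithmic derivative of $|u_\mu|$ along $J_\mu$ using Claim~\ref{claim1}. On $J_\mu$ the function $u_\mu$ has constant sign $\varepsilon_\mu$, so $|u_\mu| = \varepsilon_\mu u_\mu > 0$ there. Hence $r \mapsto \log|u_\mu(r)|$ is $C^1$ on $J_\mu$, with
\[
\left|\frac{d}{dr}\log|u_\mu(r)|\right| = \frac{|u_\mu'(r)|}{|u_\mu(r)|} \leq K_1,
\qquad r \in J_\mu ,
\]
by \eqref{claim1:largesols}.

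Integrating from $a_\mu$ to any $r \in J_\mu = [a_\mu, b_\mu]$ gives
\[
\bigl|\log|u_\mu(r)| - \log|u_\mu(a_\mu)|\bigr| \leq K_1 (r - a_\mu) \leq K_1 R .
\]
Exponentiating, we obtain
\[
e^{-K_1 R}\,|u_\mu(a_\mu)| \leq |u_\mu(r)| \leq e^{K_1 R}\,|u_\mu(a_\mu)|
\]
for every $r \in J_\mu$. So we may take $K_2 := e^{K_1 R} > 1$. It is independent of $\mu$ because $K_1$ depends only on $\delta_0$, $R$ and $N$, and $b_\mu - a_\mu \leq R$.

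Only two points need checking, and neither is a real obstacle. First, $J_\mu$ lies strictly inside a nodal interval, since it is shifted by $\delta_0$ from the zero endpoint. Second, if the other endpoint is a critical point $s^i_\mu$, then $u_\mu(s^i_\mu) \neq 0$, because $(u_\mu, v_\mu)$ never reaches the origin. Together these guarantee $|u_\mu| > 0$ on all of $J_\mu$, so the logarithm is well defined. Everything else is immediate from Claim~\ref{claim1}, which already carries the substantive work.
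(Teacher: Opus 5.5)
Your proposal is correct and matches the paper's proof: you integrate the bound $\bigl|\frac{d}{dr}\log|u_\mu|\bigr|\le K_1$ from Claim~\ref{claim1} over $J_\mu$ and take $K_2=e^{K_1R}$. Your check that $|u_\mu|>0$ on $J_\mu$ holds because $J_\mu$ lies inside a nodal interval, so the logarithm is well defined; the paper leaves this implicit.
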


\begin{proof}[Proof of the Claim]
By Claim \ref{claim1}, one has
\[
\frac{|u_\mu'(r)|}{|u_\mu(r)|}\leq K_1,
\quad \text{for every }r\in J_\mu,
\]
or, equivalently,
\[
\left|\frac{d}{dr}\log|u_\mu(r)|\right|
\leq K_1,
\quad \text{for every }r\in J_\mu.
\]
Integrating this from $a_\mu$ to $r$, and using that $|r-a_\mu|\leq R$, it yields
\[
e^{-K_1R}|u_\mu(a_\mu)|
\leq
|u_\mu(r)|
\leq
e^{K_1R}|u_\mu(a_\mu)|,
\]
from which the conclusion follows.
\end{proof}

Now we can finally prove \eqref{stimagmu}. To begin, given Claims~\ref{claim1} and~\ref{claim2}, there exist constants $K_1>0$ and $K_2>1$, independent of $\mu$, such that
\begin{align}
|u_\mu'(r)|
&\leq K_1|u_\mu(r)|
&&\text{for all \,}r\in J_\mu,
\label{eq:C1bound}\\
K_2^{-1}|u_\mu(a_\mu)|
\leq |u_\mu(r)|
&\leq K_2|u_\mu(a_\mu)|
&&\text{for all \,}r\in J_\mu.
\label{eq:Kbound}
\end{align}
 From now on, $C$ will refer to a generic constant independent of $\mu$, that might change from line to line. Note that, since $u_\mu\to0$ uniformly on $[0,R]$, for $\mu$ large enough one has
\[
K_1\|u_\mu\|_{L^\infty(0,R)}\leq\frac12,
\]
which, combined with \eqref{eq:C1bound}, yields
\begin{equation}
\label{1half}
|u_\mu'(r)|\leq\frac12
\quad\text{in }J_\mu.
\end{equation}
Then, using the elementary estimate
\[
|\varphi(s)|\leq 2\varphi(1/2)|s|,
\qquad |s|\leq\frac12,
\]
together with \eqref{eq:C1bound} and \eqref{eq:Kbound}, we obtain, for every $r\in J_\mu$,
\begin{equation}
\label{eq:phibound}
|\varphi(u_\mu'(r))|
\leq
2\varphi(1/2)|u_\mu'(r)|
\leq
C|u_\mu(a_\mu)|,
\end{equation}
Integrating \eqref{eq:radial} over $J_\mu=[a_\mu,b_\mu]$, we obtain
\begin{equation}
\label{eq:integrated}
\int_{a_\mu}^{b_\mu}
r^{N-1}\bigl(\lambda u_\mu+\mu g(u_\mu)\bigr)\,dr
=
-\Bigl[r^{N-1}\varphi(u_\mu')\Bigr]_{a_\mu}^{b_\mu}.
\end{equation}
Multiplying the previous equality by $\varepsilon_\mu$, and recalling that $u_\mu$ and $g(u_\mu)$ have the same sign on $J_\mu$, gives
\[
\int_{a_\mu}^{b_\mu}
r^{N-1}\bigl(\lambda|u_\mu|+\mu|g(u_\mu)|\bigr)\,dr
=
-\varepsilon_\mu
\Bigl[r^{N-1}\varphi(u_\mu')\Bigr]_{a_\mu}^{b_\mu}.
\]
Using \eqref{eq:phibound} on the right-hand side, we obtain
\begin{equation}
\label{eq:phibound2}
\int_{a_\mu}^{b_\mu}
r^{N-1}\bigl(\lambda|u_\mu|+\mu|g(u_\mu)|\bigr)\,dr
\leq
C|u_\mu(a_\mu)|.
\end{equation}

On the other hand, for $\mu$ sufficiently large, so that $|u_\mu|<\delta$, assumption $(g_2)$ and \eqref{eq:Kbound} imply
\[
|g(u_\mu(r))|
\geq
C|u_\mu(a_\mu)|^{\gamma-1},
\qquad
r\in J_\mu,
\]
Moreover, from \eqref{Jlength} and $a_\mu\geq\delta_0$,
\[
\int_{a_\mu}^{b_\mu}r^{N-1}\,dr
\geq a_\mu^{N-1} (b_\mu - a_\mu) \geq \delta_0^{N-1} c_0 \geq C,
\]
Therefore,
\[
\int_{a_\mu}^{b_\mu}
r^{N-1}\mu|g(u_\mu)|\,dr
\geq
\mu C
|u_\mu(a_\mu)|^{\gamma-1}.
\]
Combining this estimate with \eqref{eq:phibound2}, we conclude that
\begin{equation}
\label{gamma2}
\mu|u_\mu(a_\mu)|^{\gamma-2}\leq C.
\end{equation}

\medskip

Recalling \eqref{1half}, one has in particular that
$|u_\mu'(a_\mu)|\leq\tfrac12$. Together with Claim~\ref{claim1}, this allows us to obtain
\[
\frac1{\sqrt{1-u_\mu'(a_\mu)^2}}-1
\leq
C|u_\mu'(a_\mu)|^2
\leq
C|u_\mu(a_\mu)|^2.
\]
Moreover, by $(g_2)$ and up to taking $\mu$ large enough, we also have
\[
\begin{aligned}
\mu G(u_\mu(a_\mu))
&\leq
C\mu|u_\mu(a_\mu)|^\gamma =
C|u_\mu(a_\mu)|^2
\bigl(\mu|u_\mu(a_\mu)|^{\gamma-2}\bigr)
\leq
C|u_\mu(a_\mu)|^2,
\end{aligned}
\]
where we use \eqref{gamma2} for the passage from the second to the third line. Hence, evaluating the energy $E_\mu$ (see \eqref{energy}) in $a_\mu$ and using the above estimates, we obtain
\[
E_\mu(a_\mu)
\leq
C|u_\mu(a_\mu)|^2.
\]
Finally, since $E_\mu$ is non-increasing (see Lemma \ref{decre}), for every
$r\in[a_\mu,R]$ we have
\[
\mu G(u_\mu(r))
\leq
E_\mu(r)
\leq
E_\mu(a_\mu)
\leq
C|u_\mu(a_\mu)|^2.
\]
Using again $(g_2)$, we conclude that
\begin{equation}
\label{u-energy-bound}
\mu|u_\mu(r)|^\gamma
\leq
C|u_\mu(a_\mu)|^2,
\qquad
r\in[a_\mu,R].
\end{equation}

It remains only to choose an interval independent of $\mu$. Indeed, since the interval selected in \eqref{liminf} has length at least $2c_0$, its left endpoint is smaller than or equal to $R-2c_0$. Therefore,
\[
a_\mu\leq R-2c_0+\delta_0<R-\frac{c_0}{2}.
\]
Hence, setting
\[
\bar r:=R-\frac{c_0}{2}\in(0,R),
\]
we have
\[
[\bar r,R]\subset[a_\mu,R]
\]
for every $\mu$. Finally, from \eqref{gamma2},
\[
|u_\mu(a_\mu)|
\leq
C\mu^{-1/(\gamma-2)}.
\]
Combining this with \eqref{u-energy-bound}, we obtain
\[
|u_\mu(r)|
\leq
C\mu^{-1/(\gamma-2)},
\qquad
r\in[\bar r,R].
\]
Using once more $(g_2)$, we conclude that
\[
\mu|g(u_\mu(r))|
\leq
C\mu|u_\mu(r)|^{\gamma-1}
\leq
C\mu^{-1/(\gamma-2)}
\leq C,
\qquad
r\in[\bar r,R].
\]
This proves \eqref{stimagmu}, hence concluding the proof.

\end{proof}

\section{Proof of Theorem \ref{thm:realization}}\label{sec4}

Throughout this section, we fix $\lambda \ge 0$ and we work with problem \eqref{eq:radial}. We rely on \cite{BoCoZi26} for
the existence and multiplicity of solutions with a prescribed number of zeros, and we use
the shooting parametrization $(u_\alpha, v_\alpha)$ of \eqref{system} and the associated
polar coordinates of Section~2.1.1. Our goal is to select, among the solutions given by
\cite[Theorem~2.1]{BoCoZi26}, two distinguished families realizing the two alternatives of Theorem \ref{thm:main-dichotomy}.

\subsection{Construction of small solutions}

We first recall two elementary facts concerning the shooting problem.
For $\alpha>0$, let $(u_\alpha,v_\alpha)$ denote the solution of
\eqref{system} with
\[
u_\alpha(0)=\alpha,
\qquad
v_\alpha(0)=0,
\]
and let $\theta_\alpha$ be its clockwise angular coordinate, normalized by
$\theta_\alpha(0)=0$. The first fact concerns the angular behaviour of trajectories starting
sufficiently close to the origin. More precisely, if
\[
\lambda<\lambda_\ell^{\mathrm{rad}},
\]
then, for every fixed $\mu\geq0$, there exists
$\alpha_*=\alpha_*(\lambda,\mu)>0$ such that
\begin{equation}
\label{eq:small-rotation}
\theta_\alpha(R)
<
\frac{\pi}{2}+(\ell-1)\pi
\qquad
\text{for every }\alpha\in(0,\alpha_*).
\end{equation}
This follows from the same comparison argument with the $\ell$-th
radial eigenfunction of the Dirichlet Laplacian used in
\cite[Lemma~2.7]{BoCoZi26}.

For later use, when $\mu>0$, we denote by
$\widetilde\rho_\alpha,\widetilde\theta_\alpha$
the coordinates in \eqref{scaledpolar} corresponding to
$\beta_1=\mu^{-1/\gamma}$ and $\beta_2=1$, with
$\widetilde\theta_\alpha(0)=0$.
Thus,
\[
    u_\alpha(r)=
    \mu^{-1/\gamma}\widetilde\rho_\alpha(r)
    \cos\widetilde\theta_\alpha(r),
    \qquad
    v_\alpha(r)=
    -\widetilde\rho_\alpha(r)
    \sin\widetilde\theta_\alpha(r).
\]
By $(g_3)$, both $\theta_\alpha$ and
$\widetilde\theta_\alpha$ are strictly increasing on $(0,R]$.
Moreover, as recalled in Section~\ref{subsec:polarcoordinates}, they have the same
comparisons with integer multiples of $\pi/2$.

The key technical tool for the construction of the family of small solutions is the following quantitative estimate on the angular displacement. It is essentially \cite[Lemma~2.9]{BoCoZi26}; we include the proof, based on a rescaling argument combined with Proposition~\ref{Prop21}, for the sake of transparency, since we are going to use it in a slightly different way than in the original work. 

\begin{lemma}
\label{claimellipse}
Let $\lambda\geq 0$. There exists $i_0\in\mathbb{N}$ such that, for every integer $i\geq i_0$, there exist $
\widetilde\mu_i=\widetilde\mu_i(\lambda)\geq 1$, $\Gamma_i=\Gamma_i(\lambda)\in(0, 1/i),
$ with
$$
\widetilde\mu_i\to+\infty,
\qquad
\Gamma_i\to0
\qquad\text{as }i\to+\infty,
$$
such that, for every $\mu\geq\widetilde\mu_i$, every solution $(u(r),v(r))$ of the differential system in \eqref{system}, defined on $[1/i,R]$ and satisfying
\begin{equation}\label{eq:threshold}
\mu^{2/\gamma}u(1/i)^2+v(1/i)^2=\Gamma_i^2,    
\end{equation}
fulfills
$$
\widetilde\theta(R)-\widetilde\theta(1/i)>(i+1)\pi,
$$
where $\gamma$ is as in $(g_2)$ and
$\widetilde\theta$ is the scaled angular coordinate
in \eqref{scaledpolar}, with
$\beta_1=\mu^{-1/\gamma}$ and $\beta_2=1$. Moreover, the sequence $\{\widetilde\mu_i\}_{i\geq i_0}$ can be chosen strictly increasing.
\end{lemma}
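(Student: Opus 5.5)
The plan is to rescale so that the problem becomes a perturbation of an autonomous superlinear Hamiltonian system with a fast time, and then to invoke the rotation estimate of Proposition~\ref{Prop21} in Appendix~\ref{appendix1}. Set $U=\mu^{1/\gamma}u$ and keep $v$, so that $(U,v)=\widetilde\rho(\cos\widetilde\theta,-\sin\widetilde\theta)$. Then
\[
U'=\mu^{1/\gamma}\varphi^{-1}\!\left(\frac{v}{r^{N-1}}\right),\qquad
v'=-r^{N-1}\Bigl(\lambda\mu^{-1/\gamma}U+\mu\, g(\mu^{-1/\gamma}U)\Bigr).
\]
By $(g_2)$, for $|U|\le\mu^{1/\gamma}\delta$ we have $\mu\, g(\mu^{-1/\gamma}U)U\asymp |U|^\gamma$. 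Moreover, for $|v|\le \Gamma_i$ small and $r\ge 1/i$, the map $\varphi^{-1}(v/r^{N-1})$ is comparable to $v/r^{N-1}$. Thus, on $[1/i,R]$ and as long as $\widetilde\rho$ stays in a small range, the system is of the form
\[
U'=\mu^{1/\gamma}a(r,v)\,v,\qquad v'=-b(r,U)\,U,
\]
where $a\ge c_i>0$ and $b(r,U)U^2\ge c_i|U|^\gamma$ with constants depending only on $i$. The angular speed is
\[
\widetilde\theta'=\frac{\mu^{1/\gamma}a v^2+bU^2}{\widetilde\rho^2}\gtrsim \mu^{1/\gamma}\frac{v^2}{\widetilde\rho^2}+\frac{|U|^\gamma}{\widetilde\rho^2}.
\]
Along a quarter turn near the $U$-axis the second term dominates, with time of order $\widetilde\rho^{\,1-\gamma/2}$; near the $v$-axis the first term dominates, with time of order $\mu^{-1/\gamma}$. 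The time needed for one full turn at radius $\widetilde\rho$ is therefore roughly $T(\widetilde\rho)\lesssim \widetilde\rho^{\,(2-\gamma)/2}+\mu^{-1/\gamma}$, which is small when $\widetilde\rho$ is large. The obstruction is that $\widetilde\rho$ must stay at most about $\mu^{1/\gamma}\delta$ in the $U$-direction, and small in the $v$-direction so that the linearization of $\varphi^{-1}$ holds.

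The first step is to fix the level $\Gamma_i$ on the ellipse \eqref{eq:threshold}, that is, $\widetilde\rho(1/i)=\Gamma_i$. The second step is to control how $\widetilde\rho$ evolves on $[1/i,R]$. I would use a modified energy $H=\tfrac{\mu^{1/\gamma}}{2}A(v)+\mathcal G(U)$, in the spirit of \eqref{energy}, which is nonincreasing up to the factor $r^{N-1}$; this keeps the trajectory in an annulus $\Gamma_i/K_i\le\widetilde\rho\le K_i\Gamma_i$, with $K_i$ depending only on $i$. Since $\gamma>2$, a small $\widetilde\rho$ actually slows the $U$-direction rotation. For this reason I expect the right choice is to take $\Gamma_i$ small (to stay in the region where $(g_2)$ and the linearization of $\varphi^{-1}$ hold) but fixed for each $i$, and then let $\mu\to\infty$. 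In rescaled time $t=\mu^{1/(2\gamma)}$-type variables the system approaches the Hamiltonian system $U'=\kappa v$, $v'=-|U|^{\gamma-2}U$ on the annulus, whose period at the energy level $\Gamma_i$ is finite. Letting $\mu$ grow then makes the number of turns on the fixed interval $[1/i,R]$ diverge. This is exactly the content I expect Proposition~\ref{Prop21} provides: a lower bound on the rotation for systems $x'=\mu^{\kappa}f(t,y)$, $y'=-g(t,x)$ with uniform sign and growth conditions on an annulus.

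The third step is the choice of parameters. For each $i\ge i_0$, choose $\Gamma_i<\min\{1/i,\Gamma_{i-1}\}$ such that the annulus lies where $(g_2)$ and $|v|/r^{N-1}\le 1/2$ hold for $r\ge 1/i$. Then choose $\widetilde\mu_i>\max\{\widetilde\mu_{i-1}+1,i\}$ so that the rotation exceeds $(i+1)\pi$ for all $\mu\ge\widetilde\mu_i$. The monotonicity of $\widetilde\mu_i$ and the limits $\widetilde\mu_i\to\infty$, $\Gamma_i\to0$ are then built into this choice.

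The main obstacle is the uniform confinement of the trajectory to the annulus, which is needed for Proposition~\ref{Prop21} to apply. The anisotropic scaling (with the factor $\mu^{1/\gamma}$ in the first equation) makes the energy involve $\mu$, so the bounds must be checked to be uniform in $\mu\ge\widetilde\mu_i$. The first approach I would try is the energy $H$ together with Gronwall on $\log H$, using $r\ge 1/i$ to control the damping term $\frac{N-1}{r}$ and the term $\lambda\mu^{-1/\gamma}U$ as a small perturbation.
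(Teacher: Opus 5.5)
Your overall architecture is the paper's: fix $\Gamma_i$ small depending only on $i$, let $\mu\to+\infty$, and choose $\widetilde\mu_i$ increasing. However, the step that actually carries the proof, the reduction to Proposition~\ref{Prop21}, is missing, and the scaling computation you offer in its place is wrong. With $u=\mu^{-1/\gamma}U$, assumption $(g_2)$ gives
\[
\mu\, g(\mu^{-1/\gamma}U)\,U=\mu^{1/\gamma}\,\mu\, g(u)\,u\asymp \mu^{1/\gamma}|U|^\gamma,
\]
not $|U|^\gamma$. So both components of the vector field carry the same factor $\mu^{1/\gamma}$. The system is not anisotropic, and the right time variable is $t=\mu^{1/\gamma}r$, not a $\mu^{1/(2\gamma)}$-type scaling. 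Set $x(t)=\mu^{1/\gamma}u(t\mu^{-1/\gamma})$ and $y(t)=v(t\mu^{-1/\gamma})$. Then the threshold \eqref{eq:threshold} becomes the fixed circle $x^2+y^2=\Gamma_i^2$, and the interval has length $\mu^{1/\gamma}(R-1/i)\to+\infty$. For every $\mu\ge1$ the hypotheses of Proposition~\ref{Prop21} hold with the $\mu$-independent functions $a_1(y)=\varphi^{-1}(y/R^{N-1})$, $b_1(y)=\varphi^{-1}(i^{N-1}y)$, $b_2(x)=C_1 i^{1-N}|x|^{\gamma-2}x$, $a_2(x)=R^{N-1}(\lambda x+C_2|x|^{\gamma-2}x)$; the $\lambda$-term is harmless since $\mu^{-2/\gamma}\le1$. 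Taking $k=i+1$, $\Gamma_i<\min\{\hat\rho_{i+1},\delta,1/i\}$, and then $\mu^{1/\gamma}(R-1/i)>\tau^*_{i+1}$ is the whole proof. Proposition~\ref{Prop21} is not a result about systems $x'=\mu^\kappa f(t,y)$. It needs $\mu$-free bounds on long intervals, which is exactly what the correct scaling provides. As written, your own estimates do not give divergence either. With $b$ independent of $\mu$, your bound $T(\widetilde\rho)\lesssim\widetilde\rho^{\,(2-\gamma)/2}+\mu^{-1/\gamma}$ stays of order $\Gamma_i^{(2-\gamma)/2}$ for fixed $\Gamma_i$, so it yields no growth in the number of turns as $\mu\to+\infty$.

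Also, confinement to an annulus is not a hypothesis of Proposition~\ref{Prop21}. It only needs the two-sided bounds on $(-\delta,\delta)$ and the global sign condition $X(t,y)y>0$ for $y\neq0$, which is automatic for $\varphi^{-1}$. A possible exit from the annulus is handled inside its proof via the spiral comparison, so your \emph{main obstacle} does not arise on the paper's route. That said, once the scaling is corrected, your energy idea gives a valid and more elementary alternative that avoids the spirals. Since $s/\sqrt{1-s}\le 2\bigl(1/\sqrt{1-s}-1\bigr)$ for $s\in[0,1)$, the energy \eqref{energy} satisfies $E'\ge-\frac{2(N-1)}{r}E$. Hence $(iR)^{-2(N-1)}E(1/i)\le E(r)\le E(1/i)$ on $[1/i,R]$. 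For $\Gamma_i$ small, $(g_2)$ and $(g_3)$ turn this into confinement of $(\mu^{1/\gamma}u,v)$ to an annulus independent of $\mu\ge1$. This annulus is not of the form $\Gamma_i/K_i\le\widetilde\rho\le K_i\Gamma_i$, since $E$ is not homogeneous in $(\mu^{1/\gamma}u,v)$, but that is irrelevant for fixed $i$. On this annulus $\widetilde\theta'\ge\mu^{1/\gamma}\delta_i^*$ for some $\delta_i^*>0$. Thus $\widetilde\theta(R)-\widetilde\theta(1/i)\ge\mu^{1/\gamma}\delta_i^*(R-1/i)$, and you conclude by choosing $\widetilde\mu_i$ accordingly.
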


\begin{proof}
Choose $i_0\in\mathbb{N}$ sufficiently large so that $1/i<R$ for every $i\geq i_0$, and fix such an $i$. We apply Proposition~\ref{Prop21} after a suitable rescaling. Set
$$
r=\frac{t}{\mu^{1/\gamma}},
$$
so that
$$
r\in[1/i,R]
\quad\Longleftrightarrow\quad
t\in I_{\mu,i}
:=
\left[\frac{\mu^{1/\gamma}}{i},\mu^{1/\gamma}R\right].
$$
We further define
$$
(x(t),y(t))
:=
\left(
\mu^{1/\gamma}u\left(\frac{t}{\mu^{1/\gamma}}\right),
v\left(\frac{t}{\mu^{1/\gamma}}\right)
\right).
$$
Then, with this transformation, the pair $(x(t),y(t))$ satisfies
$$
\begin{cases}
x'
=
\varphi^{-1}\left(
\left(\dfrac{\mu^{1/\gamma}}{t}\right)^{N-1}y
\right)
=:X_{\mu}(t,y),\\[2mm]
y'
=
-\left(\dfrac{t}{\mu^{1/\gamma}}\right)^{N-1}
\left[
\dfrac{\lambda}{\mu^{2/\gamma}}x
+
\mu^{(\gamma-1)/\gamma}
g\left(\dfrac{x}{\mu^{1/\gamma}}\right)
\right]
=:-Y_{\mu}(t,x).
\end{cases}
$$
with $t \in I_{\mu,i}$. Since
$$
\frac1i\leq \frac{t}{\mu^{1/\gamma}}\leq R,
$$
the sign and monotonicity of $\varphi^{-1}$ imply, for every $y\in\mathbb{R}$,
$$
\varphi^{-1}\left(\frac{y}{R^{N-1}}\right)y
\leq
X_\mu(t,y)y
\leq
\varphi^{-1}\left(i^{N-1}y\right)y,
$$
and, for every $y \not = 0$,
$$X_\mu(t,y)y > 0.$$
Moreover, for $\mu\geq1$ and $x\in(-\delta,\delta)$, assumption $(g_2)$ gives
$$
\frac{C_1}{i^{N-1}}|x|^{\gamma} 
\leq
Y_\mu(t,x)x
\leq
R^{N-1}\left(\lambda x+C_2|x|^{\gamma-2}x\right)x.
$$
Hence Proposition~\ref{Prop21} applies with
$$
a_1(y)
=
\varphi^{-1}\left(\frac{y}{R^{N-1}}\right),
\qquad
b_1(y)
=
\varphi^{-1}\left(i^{N-1}y\right),
$$
and
$$
b_2(x)
=
\frac{C_1}{i^{N-1}}|x|^{\gamma-2}x,
\qquad
a_2(x)
=
R^{N-1}\left(\lambda x+C_2|x|^{\gamma-2}x\right).
$$
Applying Proposition~\ref{Prop21} with $k=i+1$, we obtain a number
$\hat\rho_{i+1}>0$ such that, for every
$$
\rho_{i+1}^*\in(0,\hat\rho_{i+1}],
$$
there exists $\tau_{i+1}^*>0$ such that
$$x^2 \left ( \mu^{1/\gamma} \frac{1}{i} \right) + y^2  \left ( \mu^{1/\gamma} \frac{1}{i} \right) = (\rho_{i+1}^{*})^2,$$
implies
$$\vartheta( \mu^{1/\gamma} R) - \vartheta\left (\mu^{1/\gamma} \frac{1}{i}\right) >(i+1) \pi,$$
provided $\mu \geq 1$ and $|I_{\mu,i}| = \mu^{1/\gamma} (R- 1/i) > \tau_{i+1}^*.$ Since we are free to choose $\rho_{i+1}^* \in (0,\hat \rho_{i+1}]$, we take in particular
$$
0<\Gamma_i=\rho_{i+1}^*
<
\min\left\{
\widehat\rho_{i+1},\delta,\frac1i
\right\}.
$$
In particular,
$$
\Gamma_i\to0
\qquad\text{as }i\to+\infty.
$$
By the above discussion, one has that

$$
x\left(\frac{\mu^{1/\gamma}}{i}\right)^2
+
y\left(\frac{\mu^{1/\gamma}}{i}\right)^2
=
\Gamma_i^2
$$
implies
$$
\vartheta(\mu^{1/\gamma}R)
-
\vartheta\left(\frac{\mu^{1/\gamma}}{i}\right)
>
(i+1)\pi,
$$
provided that
$ |I_{\mu,i}| = \mu^{1/\gamma}\left(R-\frac1i\right) > \tau_{i+1}^*$. Thus it is enough to take
$$
\mu> \max\left \{1,
\left(
\frac{\tau_{i+1}^*}{R-1/i}
\right)^\gamma \right\}.
$$
Since
$$
x\left(\frac{\mu^{1/\gamma}}{i}\right)^2
+
y\left(\frac{\mu^{1/\gamma}}{i}\right)^2
=
\mu^{2/\gamma}u(1/i)^2+v(1/i)^2,
$$
and the standard angular coordinate $\vartheta$ of $(x(t),y(t))$ coincides with the scaled angular coordinate $\widetilde\theta$ defined in the statement, we obtain
$$
\widetilde\theta(R)-\widetilde\theta(1/i)>(i+1)\pi,
$$
yielding the conclusion. Then, for each $i$, let $\bar\mu_i$ be sufficiently large so that the previous conclusion holds for every $\mu\geq\bar\mu_i$. Finally, replacing these thresholds, if necessary, by a strictly increasing sequence satisfying
$$
\widetilde\mu_i\geq\overline\mu_i,
\qquad
\widetilde\mu_i\to+\infty,
$$
does not affect the conclusion. This finishes the proof.
\end{proof}

Let $\lambda<\lambda_\ell^{\mathrm{rad}}$ and fix $i\geq\ell$
sufficiently large. For every
\[
\mu\in[\widetilde\mu_i,\widetilde\mu_{i+1}),
\]
we first construct a shooting parameter
$\hat\alpha_i=\hat\alpha_i(\mu,\lambda)$ satisfying the threshold
condition \eqref{eq:threshold} in Lemma~\ref{claimellipse}. To this end, consider the map
\[
F_{\mu,i}(\alpha)
:=
\mu^{2/\gamma}u_\alpha(1/i)^2+v_\alpha(1/i)^2.
\]
By continuous dependence on $\alpha$ (see Lemma \ref{continuityalpha}), $F_{\mu,i}$ is
continuous and
\[
F_{\mu,i}(0)=0.
\]
On the other hand, taking $\alpha=R+1$ and recalling that
$|u_\alpha'|<1$, we have
\[
u_{R+1}(r)
\geq
R+1-r
\geq1,
\qquad r\in[0,R],
\]
and hence
\[
F_{\mu,i}(R+1)
\geq
\mu^{2/\gamma}.
\]
Since $\Gamma_i<1/i \leq 1$ and $\mu>1$, we therefore have
\[
F_{\mu,i}(0)<\Gamma_i^2<F_{\mu,i}(R+1).
\]
By continuity, there exists
\[
\hat\alpha_i(\mu,\lambda)\in(0,R+1)
\]
such that
\begin{equation}
\label{eq:threshold-alpha}
\mu^{2/\gamma}
u_{\hat\alpha_i}(1/i)^2
+
v_{\hat\alpha_i}(1/i)^2
=
\Gamma_i^2.
\end{equation}
Set
\[
\bar u_\mu:=u_{\hat\alpha_i(\mu,\lambda)},
\qquad
\bar v_\mu:=v_{\hat\alpha_i(\mu,\lambda)}.
\]
By Lemma~\ref{claimellipse},
\[
\widetilde\theta_{\hat\alpha_i}(R)
-
\widetilde\theta_{\hat\alpha_i}(1/i)
>
(i+1)\pi.
\]
Since $\widetilde\theta_{\hat\alpha_i}(0)=0$ and
$\widetilde\theta_{\hat\alpha_i}$ is strictly increasing, we have
$\widetilde\theta_{\hat\alpha_i}(1/i)>0$, and therefore
\[
\widetilde\theta_{\hat\alpha_i}(R)>(i+1)\pi.
\]
As positive rescalings of the axes preserve comparisons with integer multiples of $\pi/2$ (see the discussion in Section \ref{subsec:polarcoordinates}), the same comparison holds for the standard angular coordinate:
\[
\theta_{\hat\alpha_i}(R)>(i+1)\pi.
\]
Since $i\geq\ell$, this yields
\begin{equation}
\label{eq:large-rotation-threshold}
\theta_{\hat\alpha_i}(R)
>
\frac{\pi}{2}+(\ell-1)\pi.
\end{equation}
On the other hand, by \eqref{eq:small-rotation}, there exists
$\alpha_*(\lambda,\mu)>0$ such that
\[
\theta_\alpha(R)
<
\frac{\pi}{2}+(\ell-1)\pi
\qquad
\text{for every }\alpha\in(0,\alpha_*).
\]
Since the map $\alpha\mapsto\theta_\alpha(R)$
is continuous, \eqref{eq:large-rotation-threshold} and the intermediate
value theorem yield a parameter
\[
\alpha_{\mu,s}^\ell
\in
(0,\hat\alpha_i(\mu,\lambda))
\]
such that
\[
\theta_{\alpha_{\mu,s}^\ell}(R)
=
\frac{\pi}{2}+(\ell-1)\pi.
\]
The corresponding solution of \eqref{system} with $u_\mu(0) = \alpha_{\mu,s}^{\ell}$, satisfies $u_\mu(R) = 0$ and has exactly $\ell-1$ zeros in $(0,R)$. This, consequently, implies that $u_\mu$ is also a solution of \eqref{eq:radial}. Moreover,
\begin{equation}
\label{eq:shooting-order}
0<u_\mu(0)
=
\alpha_{\mu,s}^\ell
<
\hat\alpha_i(\mu,\lambda)
=
\bar u_\mu(0).
\end{equation}
For every sufficiently large $\mu$, let $i(\mu)$ be the unique integer
such that
\[
\widetilde\mu_{i(\mu)}
\leq
\mu
<
\widetilde\mu_{i(\mu)+1}.
\]
Since $\widetilde\mu_i\to+\infty$, we have
\[
i(\mu)\to+\infty
\qquad\text{as }\mu\to+\infty.
\]
From \eqref{eq:threshold-alpha},
\[
|\bar u_\mu(1/i(\mu))|
\leq
\frac{\Gamma_{i(\mu)}}{\mu^{1/\gamma}}<\frac{1}{\mu^{1/\gamma}},
\]
and therefore, since $\bar u_\mu$ is $1$-Lipschitz,
\[
|\bar u_\mu(0)|
\leq
|\bar u_\mu(1/i(\mu))|
+
\frac{1}{i(\mu)}
\longrightarrow0\quad\text{as }\mu\to +\infty.
\]
By \eqref{eq:shooting-order}, the previous estimate immediately gives
\[
u_\mu(0)\longrightarrow0
\qquad\text{as }\mu\to+\infty.
\]
To conclude, let $\mu_n\to+\infty$ be arbitrary. Since the functions
$u_{\mu_n}$ are uniformly bounded and $1$-Lipschitz, the Arzelà--Ascoli theorem yields a subsequence, still denoted by $u_{\mu_n}$ for simplicity, such that
\[
u_{\mu_n}\longrightarrow u_\infty
\qquad\text{uniformly on }[0,R].
\]
The convergence of the initial values gives $u_\infty(0)=0$. If
$u_\infty\not\equiv0$, then, as proved in the first part of the proof of Theorem~\ref{thm:main-dichotomy}, the corresponding limiting pair
$(u_\infty,w_\infty)$ satisfies
\[
u_\infty(r)^2+w_\infty(r)^2\neq0
\qquad\text{for every }r\in[0,R].
\]
Since $w_\infty(0)=0$, this would imply $u_\infty(0)\neq0$, a contradiction. Therefore every cluster point of the family $\{u_\mu\}_\mu$ is the zero function. It follows that
\[
u_\mu\longrightarrow0
\qquad\text{uniformly on }[0,R]
\]
as $\mu\to+\infty$. Hence the family $\{u_\mu\}_\mu$ realizes alternative~(1) of Theorem~\ref{thm:main-dichotomy}. This proves the second part of Theorem~\ref{thm:realization}, with
\[
u^{(s)}_{\mu,\ell}:=u_\mu.
\]

\subsection{Construction of the large solutions}
\label{largesols}
To prove the existence of a family of solutions satisfying alternative (2) of Theorem \ref{thm:main-dichotomy}, we shall apply a \emph{backward shooting} method. That is, we shall consider the following problem:

\begin{equation}
\label{backwardshooting}
\left\{
\begin{aligned}
u' &= \varphi^{-1}\!\left(\frac{v}{r^{N-1}}\right),
&& r \in (0,R),\\
v' &= -r^{N-1}\bigl(\lambda u+\mu g(u)\bigr),
&& r \in (0,R),\\
u(R) &= 0,\\
v(R) &= \eta.
\end{aligned}
\right.
\end{equation}
Here $\eta>0$ when $\ell$ is even, and $\eta<0$ when $\ell$ is odd.
To fix the ideas, we will consider the case in which $\ell$ is even, the
other one being analogous.

By the standard Cauchy-Lipschitz theory for ODEs, the Cauchy problem \eqref{backwardshooting} is well-posed and, for every $\eta \in \R$, has a unique solution $(u_\eta,v_\eta)$. Moreover, the solutions $(u_\eta,v_\eta)$ of
\eqref{backwardshooting} give rise to solutions of \eqref{eq:radial} if and
only if $v_\eta(0)=0$. Furthermore, the same continuous-dependence result on the value $\eta$ holds, as
in Lemma~\ref{continuityalpha}. That is, if $\eta \to \bar \eta$, then
$$(u_\eta,v_\eta) \to (u_{\bar\eta}, v_{\bar\eta}) \quad \text{uniformly on } [0,R]. $$
In particular, the solutions of \eqref{backwardshooting} depend
continuously on $\eta$. As for \eqref{system}, if $\eta=0$, then
$(u_\eta,v_\eta)\equiv(0,0)$.

For $\eta\neq0$, we introduce the clockwise polar coordinates
$$
u_\eta(r)=\varrho_\eta(r)\cos\vartheta_\eta(r),
\qquad
v_\eta(r)=-\varrho_\eta(r)\sin\vartheta_\eta(r),
$$
with $\vartheta_\eta(R)=-\pi/2$ when $\eta>0$ and $\vartheta_\eta(R) = \pi/2$ when $\eta <0$, respectively. A direct computation gives
\[
\vartheta_\eta'(r)
=
\frac{\sin^2\vartheta_\eta(r)}
{r^{N-1}\sqrt{1+(v_\eta(r)/r^{N-1})^2}}
+
r^{N-1}
\left(
\lambda\cos^2\vartheta_\eta(r)
+
\frac{\mu g(u_\eta(r))u_\eta(r)}
{\varrho_\eta(r)^2}
\right).
\]
In particular, by $(g_3)$, $\vartheta_\eta$ is strictly increasing.
For $\mu>0$, we also denote by
$\widetilde\varrho_\eta,\widetilde\vartheta_\eta$
the scaled coordinates given by \eqref{scaledpolar},
with $(u_\alpha,v_\alpha)$ replaced by $(u_\eta,v_\eta)$
and with $\beta_1=\mu^{-1/\gamma}$, $\beta_2=1$.
We normalize the scaled angle by
\[
    \widetilde\vartheta_\eta(R)=\vartheta_\eta(R).
\]
Again, $\widetilde\vartheta_\eta$ is strictly increasing,
and both $\vartheta_\eta$ and
$\widetilde\vartheta_\eta$ have the same comparisons
with integer multiples of $\pi/2$.

We can now prove the following result.

\begin{lemma}
\label{etatilde}
There exists $\tilde\eta=\tilde\eta(\lambda,\mu)>0$ such that, for every
$|\eta|>\tilde\eta$,
\[
0\leq
\vartheta_\eta(R)-\vartheta_\eta(0)
<
\frac{\pi}{2}.
\]
\end{lemma}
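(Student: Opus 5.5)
The plan is to exploit the fact that a large $|\eta|$ forces $|v_\eta|$ to stay large on the whole interval $[0,R]$. As a consequence, the backward trajectory cannot leave a narrow sector around the vertical axis, and so it cannot rotate by $\pi/2$. Fix $\lambda,\mu$ and treat $\eta>0$; the case $\eta<0$ is symmetric. The lower bound $\vartheta_\eta(R)-\vartheta_\eta(0)\ge 0$ is immediate, because $\vartheta_\eta$ is strictly increasing. For the upper bound, we only need to show that for $\eta$ large the path $(u_\eta,v_\eta)$ on $[0,R]$ never reaches the horizontal axis $\{v=0\}$. Since $\vartheta_\eta(R)=-\pi/2$ and the angle decreases as $r$ decreases, it starts in the open half-plane $\{v>0\}$, which corresponds to angles in $(-\pi,0)$. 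As long as $v_\eta>0$, the angle stays in $(-\pi,-\pi/2]$; this uses monotonicity together with the fact that going backward from $-\pi/2$ the next axis crossing into $u>0$ is impossible without first hitting $v=0$. We will strengthen this slightly and show the angle stays in $(-\pi,-\pi/2]$ with $\vartheta_\eta(0)>-\pi$, which gives the strict inequality.

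The key uniform bound comes from the spacelike constraint. Since $|u_\eta'|<1$ and $u_\eta(R)=0$, we have $|u_\eta(r)|\le R$ on $[0,R]$, independently of $\eta$. Integrating the second equation of \eqref{backwardshooting} from $r$ to $R$ then gives
\[
|v_\eta(r)-\eta|\le \int_r^R s^{N-1}\bigl(\lambda|u_\eta|+\mu|g(u_\eta)|\bigr)\,ds\le R^{N}\Bigl(\lambda R+\mu\max_{|s|\le R}|g(s)|\Bigr)=:M(\lambda,\mu).
\]
Setting $\tilde\eta:=2M$, for $\eta>\tilde\eta$ we obtain $v_\eta(r)\ge \eta-M>M>0$ on all of $[0,R]$. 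In particular $v_\eta(0)\neq0$, the trajectory never meets the horizontal axis, and $\vartheta_\eta(r)\in(-\pi,0)$ for every $r$.

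Combined with monotonicity, this yields $\vartheta_\eta(0)\in(-\pi,-\pi/2]$, so that $\vartheta_\eta(R)-\vartheta_\eta(0)<\pi/2$. For $\eta<0$ the same estimate gives $v_\eta<0$ throughout, hence $\vartheta_\eta(r)\in(0,\pi)$ with $\vartheta_\eta(R)=\pi/2$, and the same conclusion follows.

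There is no serious obstacle here. The one point requiring care is the continuity of the angle near $r=0$, where the first equation is singular. This is harmless, since $(u_\eta,v_\eta)$ extends continuously to $[0,R]$ by well-posedness and we only use the a priori bound $|u_\eta|\le R$. The rest is bookkeeping with the angular normalization, namely checking that staying in a half-plane bounds the angular range by $\pi$ and, with the starting angle on the axis, by $\pi/2$.
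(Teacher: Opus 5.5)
Your proposal is correct and follows essentially the paper's argument. The a priori bound $|u_\eta|\le R$ bounds $|v_\eta(r)-\eta|$ independently of $\eta$, so $v_\eta$ keeps the sign of $\eta$ on $[0,R]$, and monotonicity of $\vartheta_\eta$ then confines the angular variation to $[0,\pi/2)$. The differences are only cosmetic: your constant is cruder ($R^N$ and a factor $2$ instead of $R^N/N$), and you conclude from the half-plane $\{v>0\}$ plus the normalization $\vartheta_\eta(R)=-\pi/2$, whereas the paper also notes $u_\eta'>0$, hence $u_\eta<0$, placing the trajectory in the second quadrant.
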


\begin{proof}
Indeed, due to the bound $|u_\eta(r)|\leq R$ for every $r\in[0,R]$, and setting
\[
M_R:=\max_{|s|\leq R}|g(s)|,
\]
for
\[
|\eta|>
\frac{R^N}{N}\bigl(\lambda R+\mu M_R\bigr)=:\tilde \eta
\]
we have, for every $r\in[0,R]$,
\[
|\eta|
>
\left|
\int_r^R
s^{N-1}
\bigl(\lambda u_\eta(s)+\mu g(u_\eta(s))\bigr)\,ds
\right|.
\]
Consequently, for every $ r \in [0,R]$,
\begin{equation}
\label{positivevbackward}
v_\eta(r)
= 
\eta+
\int_r^R
s^{N-1}
\bigl(\lambda u_\eta(s)+\mu g(u_\eta(s))\bigr)\,ds
\begin{cases}
    >0 \quad \text{if } \eta >\tilde \eta \\
    < 0 \quad \text{if } \eta < - \tilde \eta.
\end{cases}
\end{equation}
It follows that, when $\eta > \tilde \eta,$ $u_\eta'(r)>0$ for $r\in(0,R)$. Since $u_\eta(R)=0$,
we have $u_\eta(r)<0$ for every $r\in[0,R)$. Therefore, the trajectory
$(u_\eta,v_\eta)$ remains confined in the second quadrant of the phase
plane. Similarly, when $\eta < - \tilde \eta $, $u_\eta'(r) <0$ for every $r \in (0,R)$. Reasoning exactly as before, this means that the trajectory $(u_\eta, v_\eta)$ remains confined in the fourth quadrant of the phase plane. These two facts, together with the monotonicity of $\vartheta_\eta$, yield
\[
0\leq
\vartheta_\eta(R)-\vartheta_\eta(0)
<
\frac{\pi}{2},
\]
which concludes the proof.
\end{proof}

We now prove a variant of Lemma~\ref{claimellipse}, considered for this backward shooting approach. Namely:

\begin{lemma}
\label{etastar}
Let $\lambda\geq0$. For every integer $i\geq1$ and every
$r_0\in(0,R)$, there exist
\[
\hat\mu_i=\hat\mu_i(\lambda,r_0)>0,
\qquad
\Gamma_i=\Gamma_i(\lambda,r_0)>0,
\]
such that, for every $\mu>\hat\mu_i$, every solution $(u(r),v(r))$ of the differential system in \eqref{backwardshooting}, defined on $[r_0,R]$ and satisfying
\[
\mu^{2/\gamma}u(R)^2+v(R)^2=\Gamma_i^2,
\]
fulfills
\begin{equation}
\label{vartheta}
\widetilde\vartheta(R)-\widetilde\vartheta(r_0)
>
(i+1)\pi,
\end{equation}
where $\gamma$ is as in $(g_2)$ and the scaled angular coordinate
$\widetilde\vartheta$ is defined by
\[
(u(r),v(r))
=
\left(
\mu^{-1/\gamma}\rho(r)\cos\widetilde\vartheta(r),
-\rho(r)\sin\widetilde\vartheta(r)
\right).
\]

\end{lemma}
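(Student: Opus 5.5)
We mimic the proof of Lemma~\ref{claimellipse}, now on the interval $[r_0,R]$ and imposing the threshold at the right endpoint $R$ instead of the left one. Setting $r=t/\mu^{1/\gamma}$ and
\[
(x(t),y(t)):=\left(\mu^{1/\gamma}u\left(\frac{t}{\mu^{1/\gamma}}\right),\,v\left(\frac{t}{\mu^{1/\gamma}}\right)\right),\qquad t\in J_\mu:=\bigl[\mu^{1/\gamma}r_0,\mu^{1/\gamma}R\bigr],
\]
we obtain the same rescaled system as before, $x'=X_\mu(t,y)$, $y'=-Y_\mu(t,x)$. Since now $r_0\le t/\mu^{1/\gamma}\le R$, the bounds become
\[
\varphi^{-1}\!\left(\frac{y}{R^{N-1}}\right)y\le X_\mu(t,y)y\le \varphi^{-1}\!\left(\frac{y}{r_0^{N-1}}\right)y,
\]
\[
r_0^{N-1}C_1|x|^\gamma\le Y_\mu(t,x)x\le R^{N-1}\bigl(\lambda x+C_2|x|^{\gamma-2}x\bigr)x
\]
for $\mu\ge1$ and $|x|<\delta$, so that the structural hypotheses of Proposition~\ref{Prop21} hold with $a_1,b_1,a_2,b_2$ as in the proof of Lemma~\ref{claimellipse}, with $i^{-1}$ replaced by $r_0$. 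Moreover $X_\mu(t,y)y>0$ for $y\neq0$. The standard angle $\vartheta$ of $(x,y)$ coincides with $\widetilde\vartheta$ (up to the normalization constant, irrelevant for differences), and $x(\mu^{1/\gamma}R)^2+y(\mu^{1/\gamma}R)^2=\mu^{2/\gamma}u(R)^2+v(R)^2$.

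The key point is that the norm is now prescribed at the final time $t=\mu^{1/\gamma}R$ rather than at the initial one. I would handle this by reversing time: setting $s=\mu^{1/\gamma}(R+r_0)-t$ and $(\hat x,\hat y)(s)=(x,-y)(t)$, the system becomes $\hat x'=X_\mu(\cdot,\hat y)$-type and $\hat y'=-Y_\mu(\cdot,\hat x)$-type with the same sign and growth bounds (since $X$ is odd in $y$). The reflection $y\mapsto -y$ together with time reversal preserves the rotation sense, so the angular variation is unchanged. The data are now prescribed at the initial time. If Proposition~\ref{Prop21}, as strengthened in the Appendix, is stated for data at an arbitrary point of the interval, or uniformly over a compact annulus of initial radii, this step is immediate. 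This is exactly where I expect the main obstacle: one must check that the Appendix result is symmetric in time, or else re-derive the rotation estimate. The mechanism of that estimate is that small orbits of the comparison Hamiltonian systems have a period tending to $0$ as $\rho\to0$, because $\gamma>2$ makes the potential $|x|^\gamma$ superquadratic. The only delicate point is keeping the trajectory inside $|x|<\delta$ for the whole interval, which follows from the comparison energies being quasi-conserved independently of the time direction.

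Applying the proposition with $k=i+1$ gives $\hat\rho_{i+1}$. We take $\Gamma_i\in(0,\min\{\hat\rho_{i+1},\delta\})$, which yields a time length $\tau^*_{i+1}$, and then choose
\[
\hat\mu_i>\max\Bigl\{1,\bigl(\tau^*_{i+1}/(R-r_0)\bigr)^\gamma\Bigr\}.
\]
With this choice the interval $J_\mu$ has length exceeding $\tau^*_{i+1}$ for all $\mu>\hat\mu_i$, and hence $\widetilde\vartheta(R)-\widetilde\vartheta(r_0)>(i+1)\pi$, which is \eqref{vartheta}.
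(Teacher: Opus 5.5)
Your argument is correct. It follows the paper's overall scheme: the rescaling of Lemma~\ref{claimellipse} with $1/i$ replaced by $r_0$, a time reversal, and Proposition~\ref{Prop21} with $k=i+1$. The time reversal, however, is done differently.

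The paper invokes Proposition~\ref{timeinverse}, whose proof uses the involution $(\tilde x,\tilde y)(t)=(y,x)(-t)$. This exchanges the roles of $X$ and $Y$, so the global sign condition \eqref{third} of Proposition~\ref{Prop21} has to be verified on $Y_\mu$, i.e.\ $Y_\mu(t,x)x>0$ for all $x\neq 0$. This is exactly where $(g_3)$ enters.

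Your involution $(\hat x,\hat y)(s)=(x,-y)(T-s)$, with $T=\mu^{1/\gamma}(R+r_0)$, instead exploits that $X_\mu$ is odd in $y$ (because $\varphi^{-1}$ is odd). It keeps $X_\mu$ in the first equation, so the global condition becomes $X_\mu(t,y)y>0$ for $y\neq0$, which holds automatically. Since $\hat\theta(s)=-\theta(T-s)+\mathrm{const}$, angular differences are indeed preserved. Thus, for this specific system, your route does not need $(g_3)$ at all. The paper's Proposition~\ref{timeinverse} has the advantage of covering general, non-odd $X$; note that reflecting $x$ instead of $y$ would require $g$ odd, so your choice of $y$ is the right one.

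Two remarks on your commentary, neither of which affects the logic.

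First, the \emph{main obstacle} you flag is not there. After your change of variables, the norm is prescribed at the left endpoint $s=\mu^{1/\gamma}r_0$ of the new interval, which is exactly the hypothesis of Proposition~\ref{Prop21}. Nothing about time-symmetry of the Appendix result needs checking.

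Second, your heuristic is backwards. For $\gamma>2$, small orbits of $y^2/2+|x|^\gamma/\gamma$ have period of order $E^{1/\gamma-1/2}\to+\infty$ as $E\to0$. Proposition~\ref{Prop21} works differently:
\begin{itemize}
\item it fixes the radius $\rho^*_k$;
\item via the spiral comparison, it shows the solution stays in an annulus, where the angular speed is at least $\delta^*_k$, as long as it has rotated less than $(k+1)\pi$;
\item after leaving that annulus, it uses \eqref{third} to ensure no more than $\pi$ is lost.
\end{itemize}
So the trajectory need not remain in $|x|<\delta$ on the whole interval. What actually forces the rotation is that the rescaled interval has length $\mu^{1/\gamma}(R-r_0)\to+\infty$.
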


\begin{proof}
The proof follows the same lines of the proof of
Lemma~\ref{claimellipse}, this time using
Proposition~\ref{timeinverse} instead of Proposition~\ref{Prop21}, and
$r_0$ instead of $1/i$. By assumption $(g_3)$, the global sign condition required in Proposition~\ref{timeinverse} is satisfied. As the proof is valid for any $r_0\in(0,R)$, the constants
$\hat\mu_i$ and $\Gamma_i$ depend also on the choice of $r_0$.
\end{proof}

Fix $r_0\in(0,R)$, and suppose, for the moment, that $\ell$ is even; take $i=\ell$ in the previous lemma. By the same lemma, for every sufficiently large $\mu$, choosing the backward shooting parameter
\[
\eta^-:=\Gamma_\ell>0,
\]
where $\Gamma_\ell=\Gamma_\ell(\lambda,r_0)$,
the corresponding trajectory satisfies the terminal condition of Lemma~\ref{etastar}. Hence
\[
\widetilde\vartheta_{\eta^-}(R)-\widetilde\vartheta_{\eta^-}(r_0)>(\ell+1)\pi.
\]
Since $\widetilde\vartheta_{\eta^-}$ is strictly increasing, we obtain the stronger estimate
\[
\widetilde\vartheta_{\eta^-}(R)-\widetilde\vartheta_{\eta^-}(0)>(\ell+1)\pi.
\]
Moreover, because $\eta^->0$, we may normalize
\[
\widetilde\vartheta_{\eta^-}(R)=\vartheta_{\eta^-}(R)=-\frac{\pi}{2}.
\]
This implies that 
$$\widetilde \vartheta_{\eta^-}(0) < -(\ell+1) \pi - \frac \pi 2,  $$
which, by the invariance of comparisons with integer multiples of $\pi/2$ under positive rescalings of the axes, yields
$$\vartheta_{\eta^-}(0) < -(\ell+1) \pi - \frac \pi 2,$$
and, consequently,
\[
\vartheta_{\eta^-}(R)-\vartheta_{\eta^-}(0)>(\ell+1)\pi
>
\frac{\pi}{2}+(\ell-1)\pi.
\]
On the other hand, by Lemma~\ref{etatilde}, we can choose
\[
\eta^+>\max\{\tilde\eta(\lambda,\mu),\Gamma_\ell\}
\]
such that
\[
0\leq
\vartheta_{\eta^+}(R)-\vartheta_{\eta^+}(0)
<
\frac{\pi}{2}
<
\frac{\pi}{2}+(\ell-1)\pi.
\]
Exploiting the continuity of the map
$
\eta\longmapsto
\vartheta_\eta(R)-\vartheta_\eta(0)
$ and the intermediate value theorem, there exists
\[
\eta_{\mu,\ell}\in(\Gamma_\ell,\eta^+)
\]
such that
\[
\vartheta_{\eta_{\mu,\ell}}(R)
-
\vartheta_{\eta_{\mu,\ell}}(0)
=
\frac{\pi}{2}+(\ell-1)\pi.
\]
As before, since $\ell$ is even, we normalize
$ \vartheta_{\eta_{\mu,\ell}}(R)=-\frac{\pi}{2}$, and therefore
\[
\vartheta_{\eta_{\mu,\ell}}(0)
=
-\ell\pi.
\]
It follows that
\[
v_{\eta_{\mu,\ell}}(0)=0,
\qquad
u_{\eta_{\mu,\ell}}(0)>0.
\]
Moreover, since $\vartheta_{\eta_{\mu,\ell}}$ is strictly increasing, the corresponding solution has exactly $\ell-1$ zeros in $(0,R)$. Hence, $u_{\eta_{\mu,\ell}}$ is a solution of \eqref{eq:radial} with the required nodal properties.

If $\ell$ is odd, the same construction is carried out for $\eta<0$.
For every $\eta<0$, we normalize
\[
\widetilde\vartheta_\eta(R)=\vartheta_\eta(R)=\frac{\pi}{2}.
\]
Taking $\eta^+:=-\Gamma_\ell$, Lemma~\ref{etastar} and the
monotonicity of $\widetilde\vartheta_{\eta^+}$ give
\[
\widetilde\vartheta_{\eta^+}(R)
-\widetilde\vartheta_{\eta^+}(0)>(\ell+1)\pi,
\]
and therefore
\[
\widetilde\vartheta_{\eta^+}(0)
<\frac{\pi}{2}-(\ell+1)\pi.
\]
Again, by the invariance of comparisons with integer multiples of $\pi/2$
under positive rescalings of the axes, we obtain
\[
\vartheta_{\eta^+}(0)<\frac{\pi}{2}-(\ell+1)\pi,
\]
and hence
\[
\vartheta_{\eta^+}(R)-\vartheta_{\eta^+}(0)>(\ell+1)\pi.
\]
On the other hand, for $\eta<0$ with $|\eta|$ sufficiently large,
the estimate following Lemma~\ref{etatilde} gives an angular variation
smaller than $\pi/2$. The intermediate value theorem therefore provides
a parameter $\eta_{\mu,\ell}<-\Gamma_\ell$ with
\[
\vartheta_{\eta_{\mu,\ell}}(R)-\vartheta_{\eta_{\mu,\ell}}(0)
=\frac{\pi}{2}+(\ell-1)\pi.
\]
Since $\vartheta_{\eta_{\mu,\ell}}(R)=\pi/2$, we obtain
\[
\vartheta_{\eta_{\mu,\ell}}(0)=(1-\ell)\pi,
\]
and again $v_{\eta_{\mu,\ell}}(0)=0$ and $u_{\eta_{\mu,\ell}}(0)>0$.
Notice that no oddness assumption on $g$ is used here.

\medskip
In what follows, we denote the solution obtained in either case by $u_\mu$ and, correspondingly,
\[
v_\mu=r^{N-1}\varphi(u_\mu').
\]
By construction,
\begin{equation}
\label{etamu}
|v_\mu(R)|=|\eta_{\mu,\ell}|>\Gamma_\ell>0,
\end{equation}
with $\Gamma_\ell$ independent of $\mu$.
We are now ready to conclude. Indeed, since $\{u_\mu\}_\mu$ is a family of solutions of \eqref{eq:radial}, the functions $u_\mu$ are $1$-Lipschitz. Therefore, by means of the Arzelà--Ascoli theorem, up to a subsequence,
\[
u_\mu\rightarrow u_\infty
\qquad\text{uniformly on }[0,R]
\]
as $\mu\to+\infty$. Moreover, from \eqref{etamu} and the monotonicity and oddness of $\varphi^{-1}$, we have
\[
|u_\mu'(R)|
=
\left|\varphi^{-1}
\left(
\frac{v_\mu(R)}{R^{N-1}}
\right)\right|
\geq
\varphi^{-1}
\left(
\frac{\Gamma_\ell}{R^{N-1}}
\right)
>0,
\]
which, in particular, yields
\begin{equation}
\label{uprimeR}
\liminf_{\mu\to+\infty}|u_\mu'(R)|>0.
\end{equation}
Now, assume by contradiction that $u_\infty\equiv0$. Then, by Theorem~\ref{thm:main-dichotomy} applied to $\{u_\mu\}_{\mu\geq M}$, for some $\bar r\in(0,R)$ we have
\[
u_\mu'\rightarrow0
\qquad\text{uniformly in }[\bar r,R].
\]
In particular,
\[
|u_\mu'(R)|\rightarrow0,
\]
contradicting \eqref{uprimeR}. Therefore, we must be in case~(2) of Theorem \ref{thm:main-dichotomy}, which concludes the proof.

\section{Numerical simulations}
\label{sec:num}

We conclude this work by presenting several numerical simulations,
obtained through a shooting method implemented in \texttt{MATLAB}.
The radial initial-value problem was integrated using the solver
\texttt{ode45}, starting at a small positive radius to avoid the
singularity at the origin, and the shooting parameter was adjusted
by bisection to satisfy the boundary condition $u(R)=0$.
Solutions were then selected according to their number of interior zeros.

These simulations illustrate the asymptotic behaviour established
in the previous sections. Throughout this section, solid curves
represent large solutions, while dashed curves represent small solutions.
We recall the notation introduced in \cite[Section~3.1.3]{BoCoZi26}:
for $\lambda<\lambda_j^{\mathrm{rad}}$, $\mu_j^\star(\lambda)$ denotes
the value of $\mu$ at the turning point of the $j$-th bifurcation branch
with respect to $\mu$, where the small and large branches of solutions
with $j-1$ interior zeros meet
(see also \cite[Figure~2]{BoCoZi26}).
Thus, for $\mu>\mu_j^\star(\lambda)$, the corresponding small and large
solutions coexist. On the other hand, if $\lambda>\lambda_j^{\mathrm{rad}}$,
then, by definition, $\mu_j^\star(\lambda)=0$, and no branch of small
solutions exists, while the branch of large solutions persists
for all $\mu\geq0$.

\begin{figure}[ht]
    \centering

    \begin{subfigure}{0.48\textwidth}
        \centering
        \includegraphics[width=\linewidth]{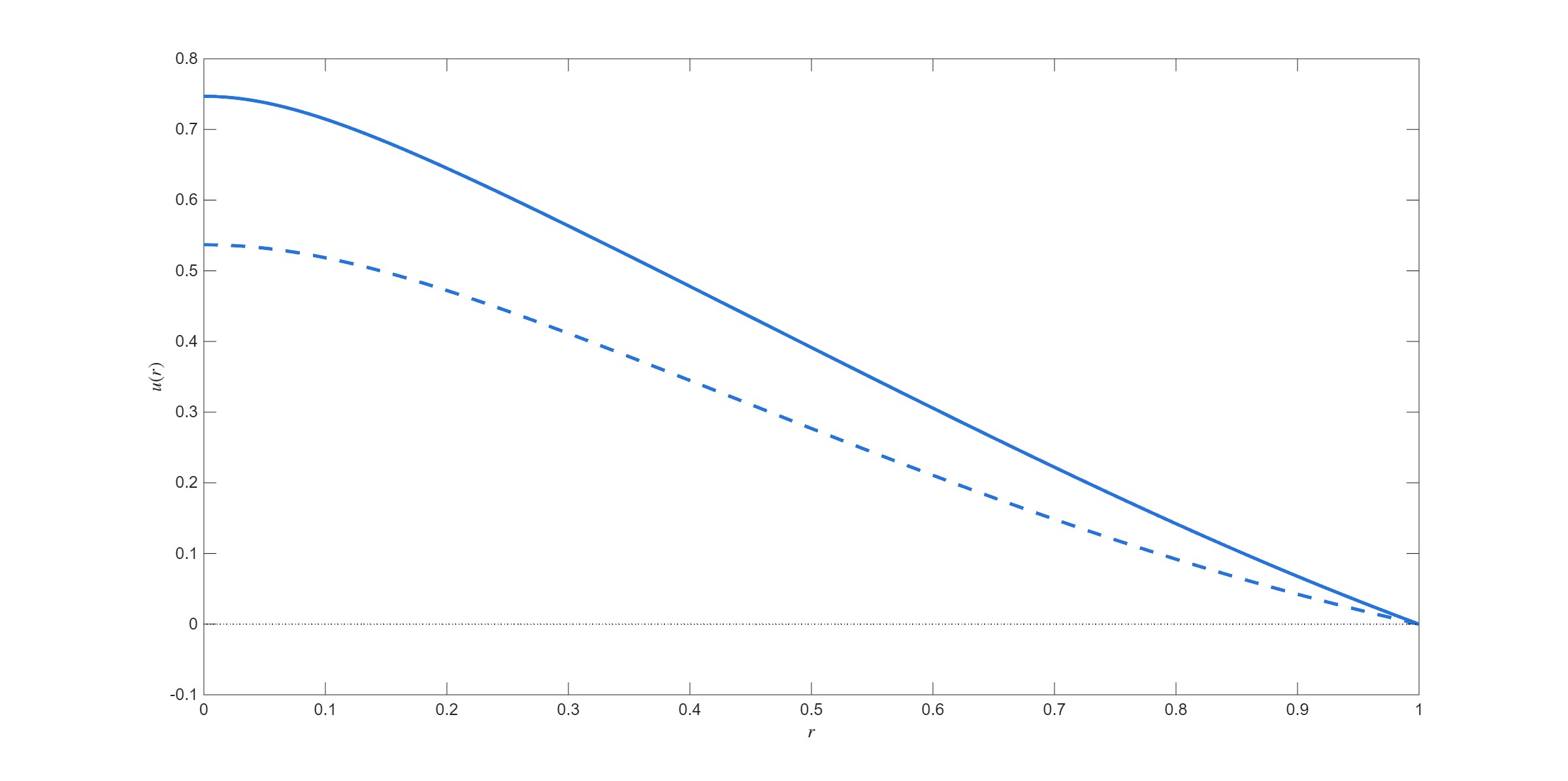}
        \caption{$\mu \approx 38$}
        \label{fig:sub1}
    \end{subfigure}
    \hfill
    \begin{subfigure}{0.48\textwidth}
        \centering
        \includegraphics[width=\linewidth]{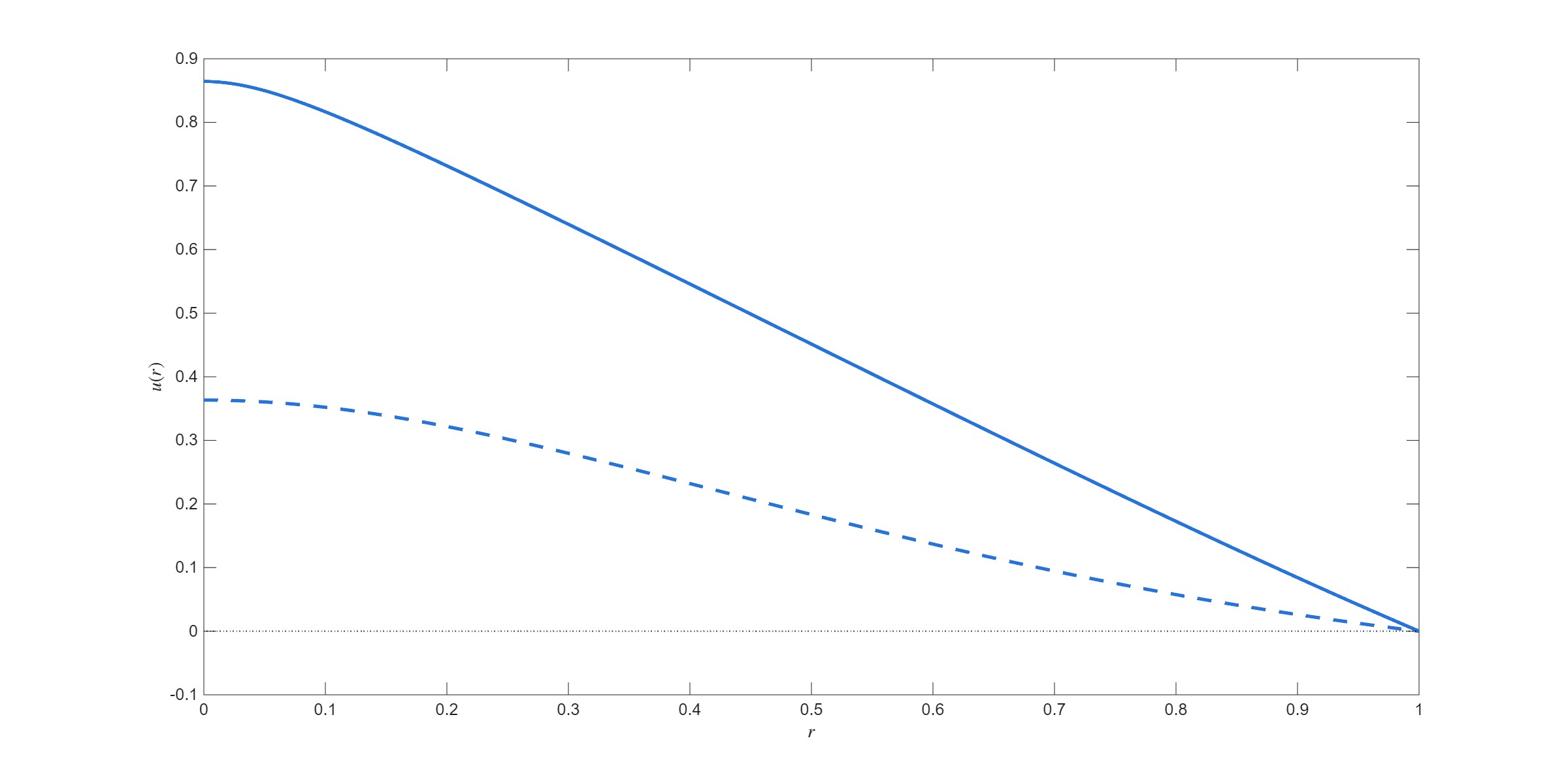}
        \caption{$\mu \approx 50$}
        \label{fig:sub2}
    \end{subfigure}
    \hfill
    \begin{subfigure}{0.48\textwidth}
        \centering
        \includegraphics[width=\linewidth]{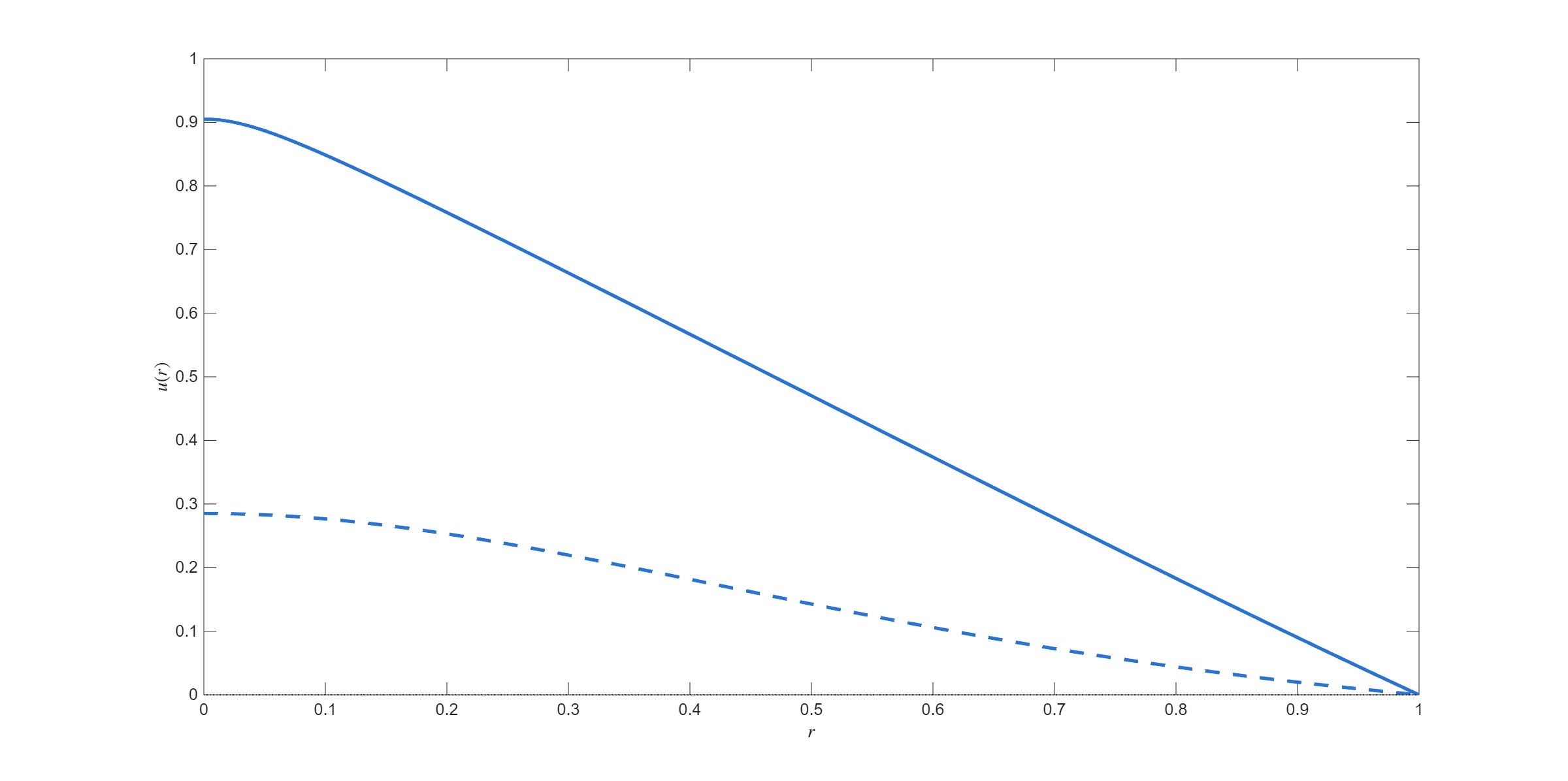}
        \caption{$\mu \approx62$}
        \label{fig:sub3}
    \end{subfigure}

    \caption{Profiles of the large and small $0$-node solutions for $\mu \approx \mu_1^\star$, $\mu \approx 1.3\mu_1^\star$, and $\mu \approx 1.6\mu_1^\star$. The parameters are $N=3$, $\lambda=1$, $R=1$, and $g(u)=|u| u$.}
    \label{0nodes}
\end{figure}
\begin{figure}[ht]
    \centering

    \begin{subfigure}{0.48\textwidth}
        \centering
        \includegraphics[width=\linewidth]{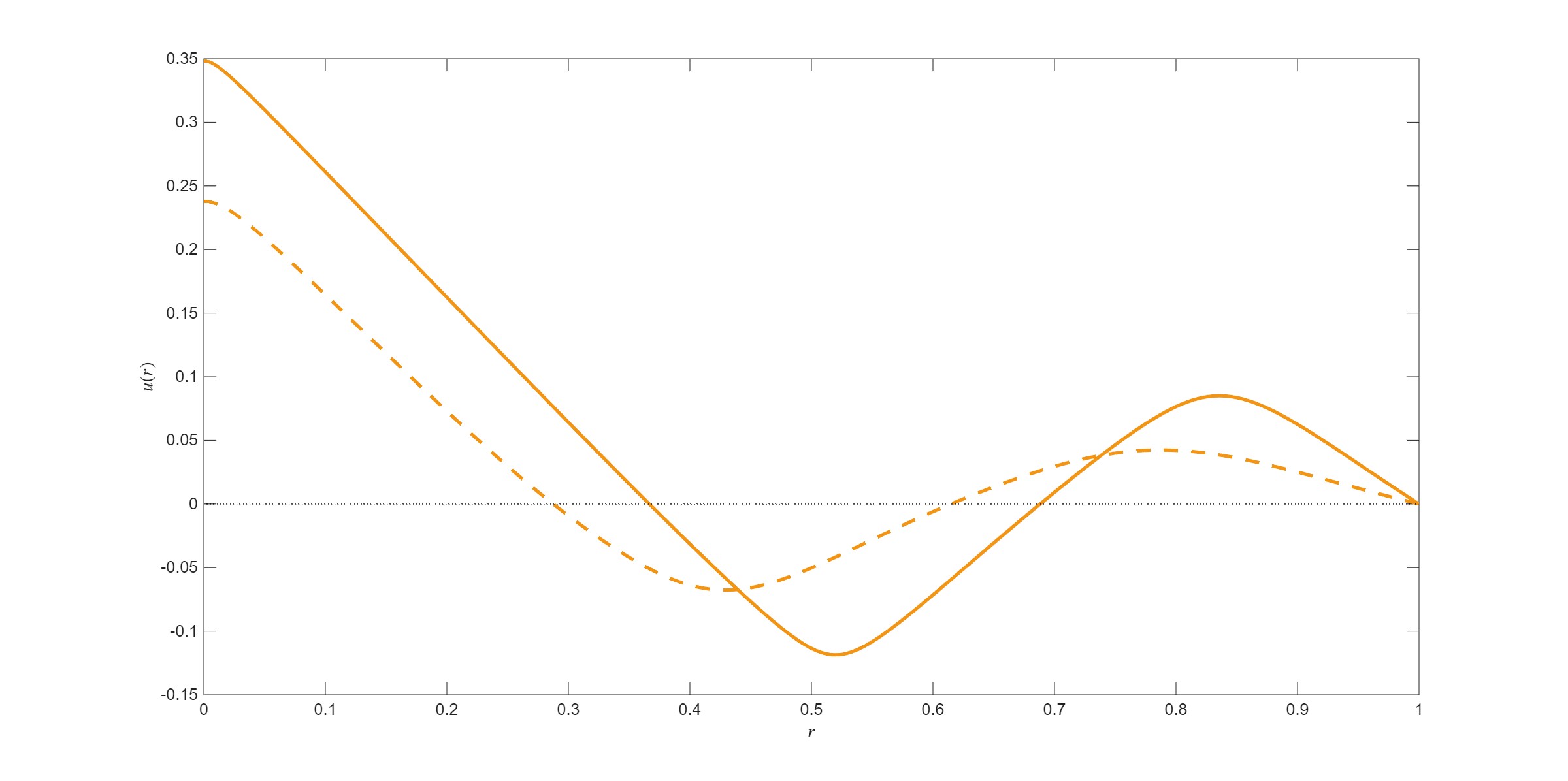}
        \caption{$\mu \approx 1947$}
        \label{fig:sub1}
    \end{subfigure}
    \hfill
    \begin{subfigure}{0.48\textwidth}
        \centering
        \includegraphics[width=\linewidth]{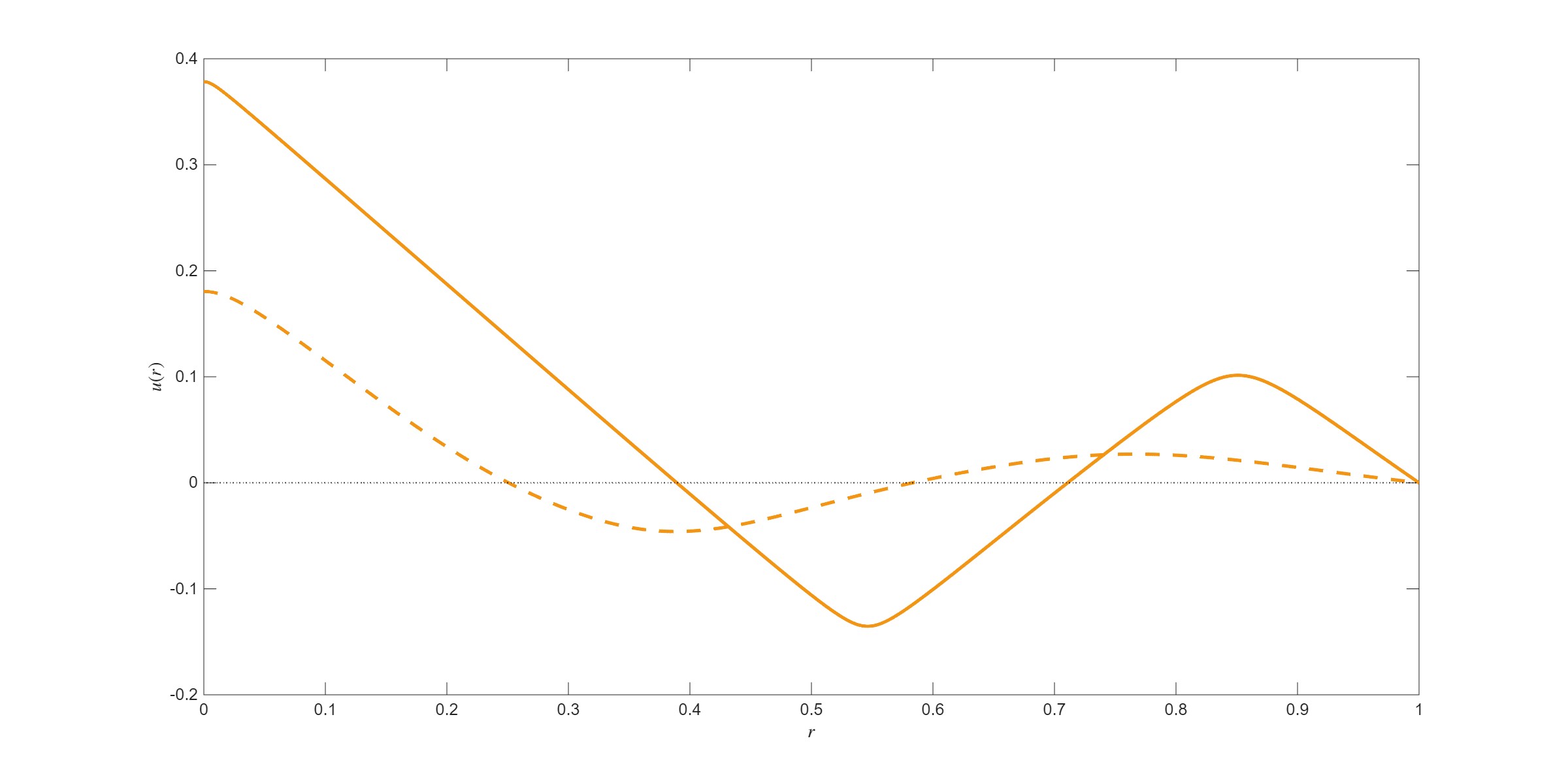}
        \caption{$\mu \approx 2532$}
        \label{fig:sub2}
    \end{subfigure}
    \hfill
    \begin{subfigure}{0.48\textwidth}
        \centering
        \includegraphics[width=\linewidth]{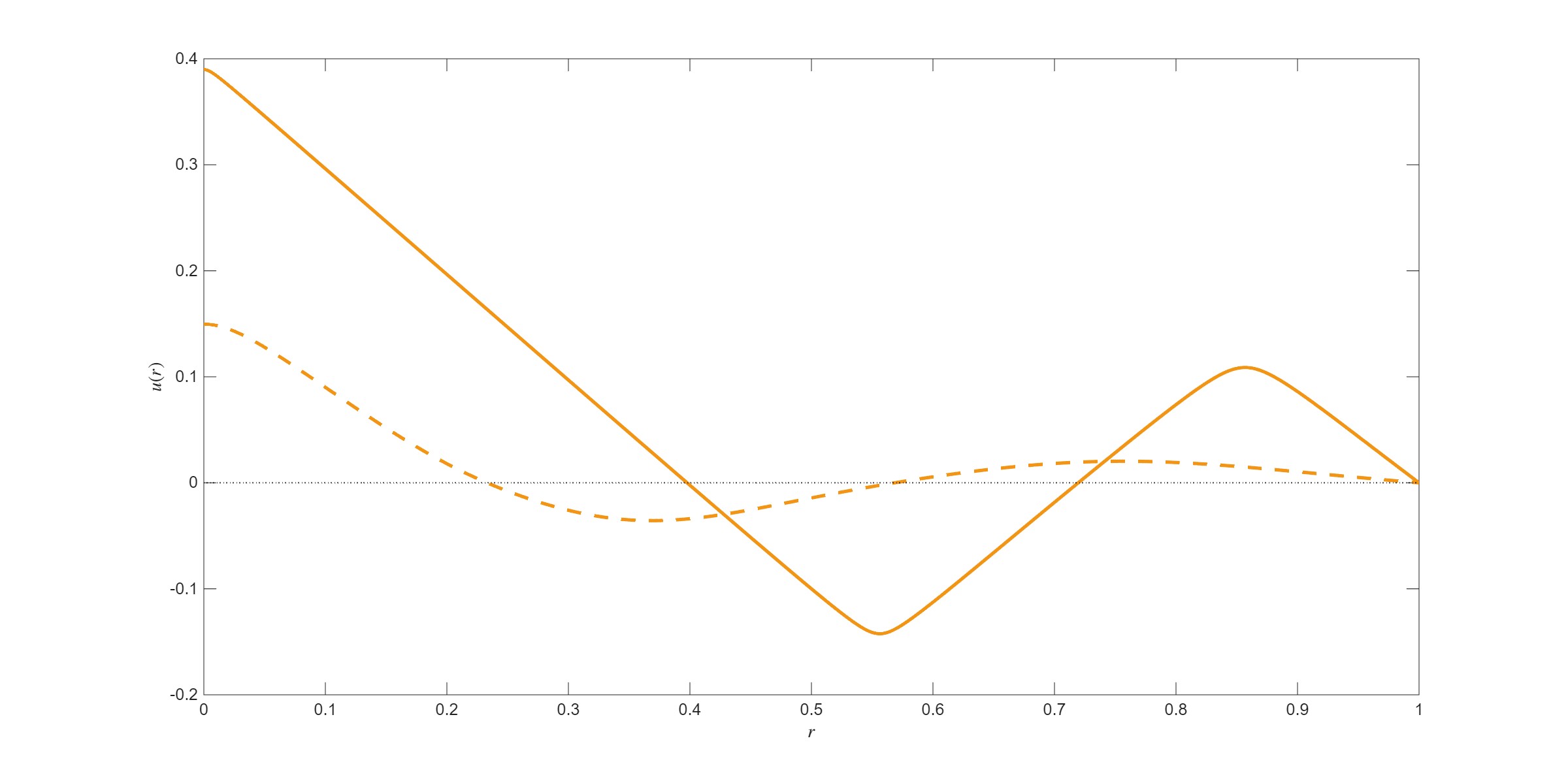}
        \caption{$\mu \approx3116$}
        \label{fig:sub3}
    \end{subfigure}

    \caption{Profiles of the large and small $2$-node solutions for $\mu \approx \mu_3^\star$, $\mu \approx 1.3\mu_3^\star$, and $\mu \approx 1.6\mu_3^\star$. The parameters are $N=3$, $\lambda=1$, $R=1$, and $g(u)=|u| u$.}
    \label{2nodes}
\end{figure}

We begin with the large and small $0$-node solutions. Their profiles for $\mu \approx \mu_1^\star$, $\mu \approx 1.3\mu_1^\star$, and $\mu \approx 1.6\mu_1^\star$ are displayed in Figure~\ref{0nodes}. It is readily observed that, as $\mu$ increases, the initial value of the large solution increases, going from $\approx 0.75$ to $\approx 0.9$, whereas that of the small solution decreases, going from $\approx0.54$ to $\approx0.29$. Similarly, in Figure~\ref{2nodes} we show the large and small 2-node solutions, for $\mu \approx \mu_3^\star$, $\mu \approx 1.3 \mu_3^\star$ and $\mu \approx 1.6 \mu_3^\star$. As in the previous case, the initial value of the large solution goes up as $\mu$ increases, going from $\approx 0.35$ to $\approx 0.4$, and the initial value of the small solution decreases, going from $\approx 0.24$ to $\approx 0.15$.

As $\mu$ increases, the large solutions exhibit increasingly piecewise affine profiles, as in alternative (2) of Theorem~\ref{thm:main-dichotomy}. In particular, Figure~\ref{allsols} shows the $0$-, $1$-, and $2$-node large solutions for large values of $\mu$, displaying the behaviour corresponding to alternative (2) of Theorem \ref{thm:main-dichotomy}.
\begin{figure}[h]
    \centering
    \includegraphics[width=0.7\linewidth]{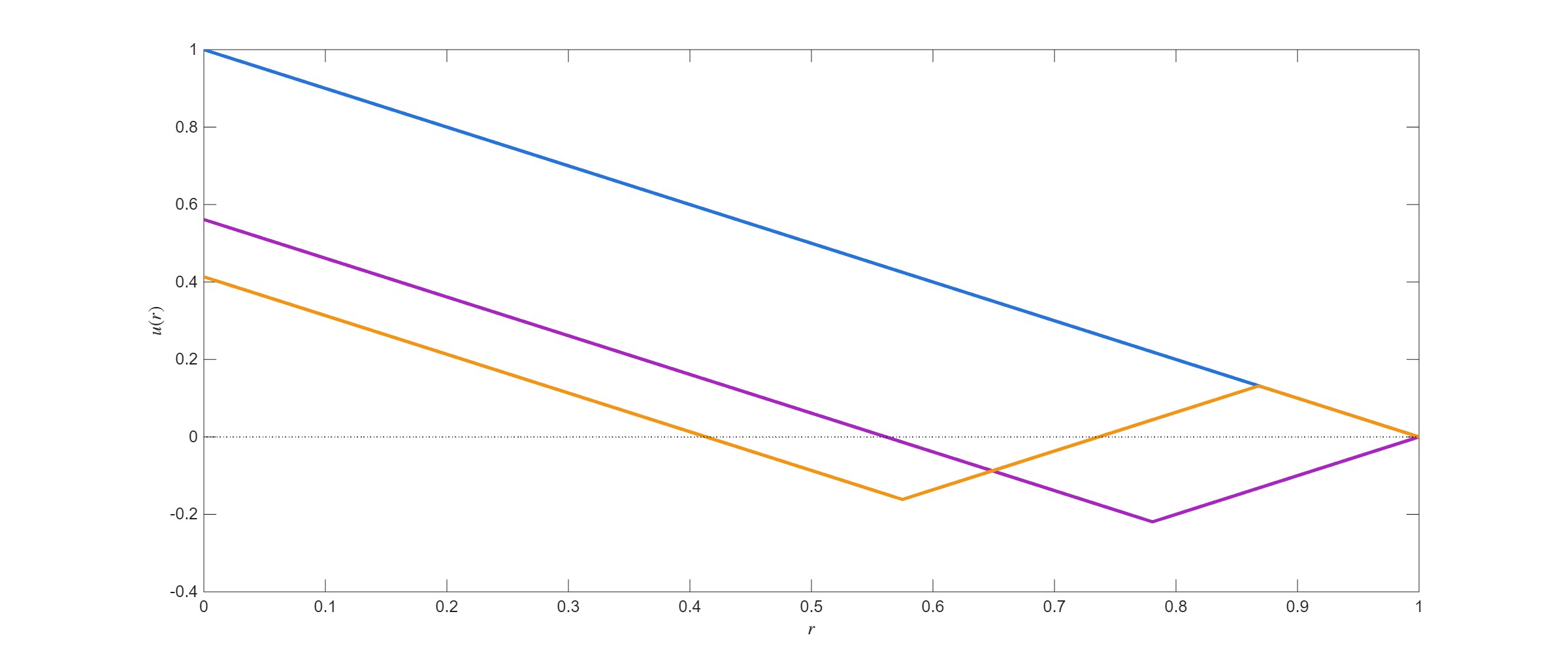}
    \caption{0-node, 1-node and 2-node large solutions for $\mu \gg1$. The parameters are $N=3$, $\lambda=1$, $R=1$, and $g(u) =|u| u$.}
    \label{allsols}
\end{figure}
\appendix
\section{Technical results}
\label{appendix1}
In this section, we are going to present an auxiliary result dealing with a planar Hamiltonian system of the type
\begin{equation}
\label{hamiltonian}
    \begin{cases}
        x' = X(t,y) \\
        y' = -Y(t,x),
    \end{cases}
\end{equation}
where $X, Y : I \times \mathbb{R} \to \mathbb{R}$ are locally Lipschitz continuous functions and $I\subseteq \mathbb R$ is an interval. Roughly speaking, we are going to prove that, under suitable (local) sign-conditions, the number of rotations of the planar path $(x(t), y(t))$ around the origin becomes arbitrarily large as the width of the interval $I$ increases.

To this end, we will write (whenever possible, namely for $x(t)^2 + y(t)^2 > 0$) solutions of \eqref{hamiltonian} in (clockwise) polar coordinates as
\begin{equation}
    x(t) = \rho(t)\cos\theta(t), \qquad y(t) = -\rho(t)\sin\theta(t), \label{eq:2.2}
\end{equation}
with $\rho(t) > 0$. Notice that, of course, the angular coordinate $\theta(t)$ is defined up to integer multiples of $2\pi$; however, the expression $\theta(t_2) - \theta(t_1)$, for any $t_1, t_2 \in I$, is uniquely determined, depending on the path $(x(t), y(t))$ only. This result is a slight variant of \cite[Lemma 3.2]{BoZa13} and \cite[Proposition 2.1]{BoGa19}, and we include its proof for the sake of completeness.

\begin{proposition}
\label{Prop21}
Let $\delta>0$ and let $a_i, b_i : [-\delta, \delta] \to \mathbb{R}$,  with $i = 1, 2$, be Lipschitz continuous functions such that
\begin{equation}
    0 < a_1(s)s \leq b_1(s)s \quad \text{and} \quad 0 < b_2(s)s \leq a_2(s)s
    \quad \text{for every } s \neq 0. \label{eq:2.3}
\end{equation}
Then, for every positive integer $k$, there exists $\hat \rho_k>0$ such that, for every $\rho^*_k \in (0,\hat\rho_k]$, there exists $\tau_k^* > 0$ such that, for every interval $I = [t_0, t_1] \subset \mathbb{R}$ with $|I| := t_1 - t_0 > \tau_k^*$ and for every locally Lipschitz continuous functions $X, Y : I \times \mathbb{R} \to \mathbb{R}$ satisfying
\begin{equation}
\label{first}
    a_1(y)y \leq X(t,y)y \leq b_1(y)y \quad \text{for every } t \in I,\ y \in (-\delta, \delta)
\end{equation}
\begin{equation} \label{third}
    0<X(t,y) y   \quad \quad \quad \, \, \, \,  \,\quad \text{for every } t\in I,\ y \not = 0
\end{equation}
and
\begin{equation}
\label{second}
    b_2(x)x \leq Y(t,x)x \leq a_2(x)x \quad \text{for every } t \in I,\ x \in (-\delta, \delta),
\end{equation}
it holds that any solution $(x(t), y(t))$ of \eqref{hamiltonian}, defined on $I$ and satisfying $x(t_0)^2 + y(t_0)^2 = (\rho_k^*)^2$, fulfills $x(t)^2 + y(t)^2 > 0$ for every $t \in I$ and
$$
    \theta(t_1) - \theta(t_0) > k\pi.
$$
\end{proposition}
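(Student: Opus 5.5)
The plan is the classical ``energy level-set'' argument of \cite{BoZa13,BoGa19}: compare the trajectory with the level curves of the autonomous Hamiltonians built from $a_i,b_i$, and use them both to trap the solution in a small annulus and to bound the time needed for a half-turn.

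Set
\[
A_i(s)=\int_0^s a_i,\qquad B_i(s)=\int_0^s b_i,
\]
which are positive for $s\neq 0$. By \eqref{eq:2.3} they satisfy $A_1\le B_1$ and $B_2\le A_2$ on $[-\delta,\delta]$. Introduce the two auxiliary energies
\[
H_{\mathrm{in}}(x,y)=A_1(y)+B_2(x),\qquad H_{\mathrm{out}}(x,y)=B_1(y)+A_2(x).
\]
Both are continuous and positive definite near the origin, and $H_{\mathrm{in}}\le H_{\mathrm{out}}$.

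\textbf{Step 1: choice of $\hat\rho_k$ and trapping.} Fix $\hat\rho_k$ so small that the sublevel sets of these energies through the circle of radius $\hat\rho_k$ lie inside the square $(-\delta,\delta)^2$. Given $\rho_k^*\le\hat\rho_k$, the goal is two nested star-shaped closed curves $\Gamma^-\subset\Gamma^+$ in $(-\delta,\delta)^2\setminus\{0\}$ such that the annular region between them contains the circle of radius $\rho_k^*$ and is positively invariant, or at least cannot be exited before the required rotation is achieved. The natural candidates are level curves of $H_{\mathrm{in}}$ and $H_{\mathrm{out}}$, or curves glued quadrant by quadrant. In each quadrant the signs of $X,Y$ are fixed by \eqref{first} and \eqref{second}. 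Hence along a solution the derivative $\frac{d}{dt}$ of $A_1(y)+A_2(x)$-type combinations has a sign, for instance
\[
\frac{d}{dt}B_2(x)=b_2(x)X \quad\text{versus}\quad \frac{d}{dt}A_1(y)=-a_1(y)Y.
\]
Arguing quadrant by quadrant, one would show that the solution crosses each quadrant monotonically in the corresponding coordinate. I expect this to be the main obstacle: because the bounds hold only in one direction per component, a single Lyapunov function does not work. The fix I would try first is the ``spiral-like'' construction of \cite{BoZa13}: patch arcs of level curves of $A_1(y)+A_2(x)$ and $B_1(y)+B_2(x)$ in alternate quadrants, obtaining a radius that can grow or shrink by at most a bounded factor $c=c(\rho_k^*)$ per turn. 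Since we need only $k$ half-turns, iterating $k$ times gives a compact annulus $\{r_-\le |(x,y)|\le r_+\}\subset(-\delta,\delta)^2\setminus\{0\}$, with $r_\pm$ depending only on $k$ and $\rho_k^*$. In particular $x^2+y^2>0$ on $I$, as long as the trajectory has not yet turned $k$ half-times. Condition \eqref{third} is used to guarantee that, even if the path left the $\delta$-square, it still rotates clockwise; this ensures that exit is impossible before completing the turns, since the sign of $\theta'$ is preserved.

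\textbf{Step 2: uniform angular speed and choice of $\tau_k^*$.} Inside the compact annulus, compute
\[
\theta'=\frac{X(t,y)y+Y(t,x)x}{x^2+y^2}\ge \frac{a_1(y)y+b_2(x)x}{x^2+y^2}\ge m>0,
\]
where $m=m(k,\rho_k^*)$ is obtained from the continuity and positivity of $a_1(s)s$ and $b_2(s)s$ on the annulus (the numerator is zero only at the origin). Then take $\tau_k^*=k\pi/m$. As long as the path stays in the annulus, $\theta(t_1)-\theta(t_0)\ge m|I|>k\pi$. If instead the path completes more than $k$ half-turns earlier, the conclusion holds anyway, by the monotonicity of $\theta$.

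In summary, Step 1 (the quadrant-wise invariant annulus, with constants independent of $t$, $X$, $Y$) is the delicate part, while Step 2 is routine once the annulus is fixed.
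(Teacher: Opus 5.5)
\textbf{There is a genuine gap.} Your overall architecture matches the paper's: spirals glued from level curves of $A_1(y)+A_2(x)$ and $B_1(y)+B_2(x)$, a compact annulus inside the $\delta$-box, the bound $\theta'\ge m>0$ there, and $\tau_k^*$ chosen accordingly. The gap is in how you treat the time after the trajectory leaves the annulus.

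You claim that \eqref{third} guarantees clockwise rotation even outside the $\delta$-square. You then conclude that once more than $k$ half-turns are done, ``the conclusion holds anyway, by the monotonicity of $\theta$''. Neither claim is true. We have $\theta'=(Y(t,x)x+X(t,y)y)/(x^2+y^2)$, and \eqref{second} is only local, so nothing controls the sign of $Y(t,x)x$ for $|x|\ge\delta$. For instance, on the $x$-axis with $|x|\ge\delta$ one has $\theta'=Y(t,x)/x$, which may be negative. A path that has advanced by just over $k\pi$ when it exits the annulus can therefore lose almost $\pi$ afterwards, and $\theta(t_1)-\theta(t_0)>k\pi$ fails.

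\textbf{The repair.} What \eqref{third} actually gives is weaker. Since $x'$ has the sign of $y$, every crossing of the $y$-axis is clockwise. Hence $\theta$ never crosses a value $\pi/2+j\pi$ downwards, so $\theta(t_1)-\theta(\sigma)>-\pi$ for any intermediate time $\sigma$ (this is \cite[Lemma 3.1]{BoZa13}). The paper compensates with an extra half-turn buffer, in three steps:
\begin{enumerate}
\item $\hat\rho_k$ is chosen so that the spirals through the circle of radius $\rho_k^*$, followed for $k+1$ full turns, stay in an annulus $\check r_k<\rho<\hat r_k$ contained in the $\delta$-box. This is done through $m^*_{k+1}$ and $M^*_{k+1}$.
\item The comparison $\rho(t)\le r_+(\theta(t))$, respectively $\rho(t)\ge r_-(\theta(t))$, comes from $S/U\le\mathcal{M}_+$ (respectively $\ge\mathcal{M}_-$) and \cite[Corollary 5.1]{BoZa13}. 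It shows that the first exit time $\sigma$ satisfies $\theta(\sigma)-\theta(t_0)>(k+1)\pi$.
\item Combining this with the $-\pi$ bound gives $\theta(t_1)-\theta(t_0)>k\pi$.
\end{enumerate}
Your Step 1 must be set up with this stronger rotation requirement before exit, and the comparison has to be actually proved rather than flagged as the main obstacle. Your appeal to monotonicity must be replaced by the $-\pi$ bound.

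\textbf{Two smaller points.} First, the energies $H_{\mathrm{in}}=A_1(y)+B_2(x)$ and $H_{\mathrm{out}}=B_1(y)+A_2(x)$ you introduce first have unsigned derivatives along solutions. In the first quadrant, $b_2(x)X-a_1(y)Y$ ranges between $a_1(b_2-a_2)\le 0$ and $b_2(b_1-a_1)\ge 0$. Only the pairings $A_1(y)+A_2(x)$ and $B_1(y)+B_2(x)$ are monotone quadrant by quadrant. Second, positivity of $x^2+y^2$ on all of $I$, not just ``as long as'' the turns are incomplete, follows from uniqueness. Indeed, the sign conditions and continuity force $X(t,0)=Y(t,0)=0$, so the origin is an equilibrium.
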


\begin{proof}
    The idea is to adapt the proof of {\cite[Lemma 3.2]{BoZa13}}. The crucial point is to construct two spiralling planar curves controlling (from below and from above) the rotations of the planar path $(x(t), y(t))$. This can be done by gluing together pieces of level curves of suitable energy functions. Precisely, after having extended $a_i$ and $b_i$ to locally Lipschitz continuous functions on the whole real line, in such a way that \eqref{eq:2.3} still holds and that the primitives
    $$A_i(s) = \int_0^s a_i(\xi) d\xi, \quad B_i(s) = \int_0^s b_i(\xi) d \xi$$
    are coercive, one defines the energies
    $$\mathcal{E}_A(x,y) := A_1(y) + A_2(x), \quad \mathcal{E}_B(x,y):= B_1(y) + B_2(x).$$
Straightforward computations show that, along a solution $(x(t), y(t))$ of \eqref{hamiltonian} such that $\vert x(t) \vert, \vert y(t) \vert < \delta$, it holds
$$
    \frac{d}{dt}\mathcal{E}_A(x(t), y(t)) \geq 0, \text{ if } x(t)y(t) \geq 0, \qquad
    \frac{d}{dt}\mathcal{E}_A(x(t), y(t)) \leq 0, \text{ if } x(t)y(t) \leq 0
$$
and
$$
    \frac{d}{dt}\mathcal{E}_B(x(t), y(t)) \geq 0, \text{ if } x(t)y(t) \leq 0, \qquad
    \frac{d}{dt}\mathcal{E}_B(x(t), y(t)) \leq 0, \text{ if } x(t)y(t) \geq 0.
$$

Now, consider the auxiliary functions 
$$
    \mathcal{M}_+(x,y) = \begin{cases}
        \dfrac{b_1(y)x - b_2(x)y}{b_2(x)x + b_1(y)y}, & \text{if } xy \geq 0 \\
        \\
        \dfrac{a_1(y)x - a_2(x)y}{a_2(x)x + a_1(y)y}, & \text{if } xy \leq 0,
    \end{cases}
$$
and

$$
    \mathcal{M}_-(x,y) = \begin{cases}
        \dfrac{b_1(y)x - b_2(x)y}{b_2(x)x + b_1(y)y}, & \text{if } xy \leq 0 \\
        \\
        \dfrac{a_1(y)x - a_2(x)y}{a_2(x)x + a_1(y)y}, & \text{if } xy \geq 0.
    \end{cases}
$$

Moreover, consider the associated first-order differential equations
\begin{equation}
    \frac{dr_-}{d\theta} = r\,\mathcal{M}_-(r\cos\theta, -r\sin\theta), 
    \quad (r,\theta) \in \mathbb{R}_0^+ \times \mathbb{R}, \label{eq:3.17}
\end{equation}
\begin{equation}
    \frac{dr_+}{d\theta} = r\,\mathcal{M}_+(r\cos\theta, -r\sin\theta), 
    \quad (r,\theta) \in \mathbb{R}_0^+ \times \mathbb{R}. \label{eq:3.18}
\end{equation}
For every $(\theta_0, \rho_0)$ with $\theta_0 \in \mathbb{R}$ and $\rho_0 > 0$, we denote, respectively, by $r_-(\,\cdot\,;\theta_0,\rho_0)$ and $r_+(\,\cdot\,;\theta_0,\rho_0)$ the solutions of either \eqref{eq:3.17} or \eqref{eq:3.18} satisfying the initial condition $r(\theta_0) = \rho_0$.

The geometrical meaning of these solutions is the following: the maps
$$
    \theta \mapsto (r_\pm(\theta;\theta_0,\rho_0)\cos\theta,\, -r_\pm(\theta;\theta_0,\rho_0)\sin\theta)
$$
parameterize two spiralling curves, in the Cartesian plane, passing through
$$
    (x_0, y_0) := (\rho_0\cos\theta_0, -\rho_0\sin\theta_0)
$$
and obtained by alternating (along the motion from one quadrant to another) level curves of $\mathcal{E}_A$ and $\mathcal{E}_B$. Such spirals are precisely those we need to bound (from below and above) in the $(x,y)$-plane the solutions of \eqref{hamiltonian}.

Since the functions $a_i$ and $b_i$ are Lipschitz continuous, the
right-hand sides of \eqref{eq:3.17} and \eqref{eq:3.18} are locally Lipschitz
with respect to $r$ on $\mathbb{R}_0^+\times\mathbb{R}$. Hence the
uniqueness of the solutions $r^\pm(\theta)$ follows from the standard
Cauchy--Lipschitz theory. On the other hand, \eqref{eq:2.3} jointly with the coercivity of $A_i, B_i$ guarantee the global existence of the solutions. The $2\pi$-periodicity of $\mathcal{M}_\pm$ in the $\theta$-variable implies
$$
    r_+(\theta+2\pi;\theta_0 + 2\pi, \rho_0) = r_+(\theta;\theta_0,\rho_0), 
    \quad \text{for all }\,\theta,\theta_0 \in \mathbb{R} \text{ and } \rho_0 > 0.
$$
At this point, we define, for every positive integer $k$ and for every $\rho_0 > 0$, the parameters
$$
    m_k^*(\rho_0) := \inf_{\substack{\theta_0 \in [0,2\pi] \\ \theta \in [\theta_0,\theta_0+2\pi k]}} r_-(\theta;\theta_0,\rho_0), 
    \qquad
    M_k^*(\rho_0) := \sup_{\substack{\theta_0 \in [0,2\pi] \\ \theta \in [\theta_0,\theta_0+2\pi k]}} r_+(\theta;\theta_0,\rho_0).
$$
The number $M_k^*(\rho_0)$ provides an upper bound for the modulus of a spiral associated with \eqref{eq:3.18} and departing from the circumference $\rho = \rho_0$, while performing an angular displacement of $2k\pi $. Similarly, $m_k^*(\rho_0)$ gives a lower bound for the modulus of the spiral associated with \eqref{eq:3.17}. Note that, from the geometrical interpretation, it is clear that both $M^*_k, m_k^*$ are monotone increasing in $\rho_0$.  Now, for $\delta > 0$ as in \eqref{first}, and any fixed $k \geq 1$, we choose $\hat \rho_k \in ]0,\delta[$ such that
$$
    M_{k+1}^*(\hat \rho_k) < \delta,
$$
and accordingly,
$$M_{k+1}^*(\rho_k^*) \leq M_{k+1}^*(\hat \rho_k)<\delta$$
Subsequently, we fix two numbers $\check{r}_k$ and $\hat{r}_k$ such that
\begin{equation}
\label{rj}
    0 < \check{r}_k < m_{k+1}^*(\rho_k^*) \leq \rho _k^* \leq M_{k+1}^*(\rho_k^*) < \hat{r}_k < \delta. 
\end{equation}
Once we have chosen $\hat{r}_k$ and $\check{r}_k$, we can define 
\begin{equation}
\label{deltaj}
    \delta_k^* := \inf_{{\check{r}_k \leq \sqrt{x^2+y^2} \leq \hat{r}_k}} \frac{b_2(x)x + a_1(y)y}{x^2 + y^2} > 0. 
\end{equation}
At last, we set
\begin{equation}
    \tau_k^* := \frac{2\pi k}{\delta_k^*} \label{eq:3.21}
\end{equation}
and, finally, we are in the position to verify that the conclusion of the proposition holds true.

Without loss of generality, we suppose that $I = [0, \tau]$ with
$$
    \tau = |I| > \tau_k^*.
$$
Let also $X,Y$ satisfying \eqref{first} and \eqref{second} respectively, and let $ (x(t), y(t))$ be a solution of \eqref{hamiltonian} (which we suppose to be defined on $I$) with
$$
x(0)^2 + y(0)^2= (\rho^*_k)^2
$$
Passing to the polar coordinates
\begin{equation}
    x(t) = \rho(t)\cos\theta(t), \qquad y(t) = -\rho(t)\sin\theta(t), \quad \text{for all }\, t \in I, \label{eq:3.22}
\end{equation}
we have that $(\rho(t), \theta(t))$ satisfies the differential system
\begin{equation}
\label{eqs}
    \begin{cases}
        \begin{aligned}
            &\rho' = X(t,-\rho\sin\theta)\cos\theta + Y(t,\rho\cos\theta)\sin\theta=: R(t,\rho, \theta) \\
            &\theta' = \frac{Y(t,\rho\cos\theta)\cos\theta - X(t,-\rho\sin\theta)\sin\theta}{\rho} := \Theta(t,\rho,\theta).
        \end{aligned}
    \end{cases}
\end{equation}
Moreover, $\rho(0) = \rho_k^*$ and, without loss of generality, 
$$\theta(0) = \theta_0 \in [0,2\pi).$$
With this, we only have to prove that
$$\theta(\tau) - \theta(0) > k \pi.$$

To do so, and since our hypotheses are only local, namely for
$x,y\in(-\delta,\delta)$, we first produce an estimate on the angular
displacement as long as the solution remains in a suitable
neighborhood of the origin. To this aim, we define
$\sigma\in(0,\tau]$ as the maximal value such that
\[
\check r_k\leq \rho(t)\leq \widehat r_k,
\qquad \text{for all }t\in[0,\sigma].
\]
By the choice of $\widehat r_k$, we have
\[
|x(t)|,|y(t)|\leq \rho(t)\leq \widehat r_k<\delta,
\qquad t\in[0,\sigma],
\]
and therefore estimates \eqref{first} and \eqref{second} hold throughout
$[0,\sigma]$.

\medskip
We now argue differently according to the fact that $\sigma = \tau$ or $\sigma < \tau$.

\begin{itemize}
    \item Suppose that $\sigma = \tau$. In this case, from \eqref{eqs}, after integrating we obtain
    $$\begin{aligned}\theta(\tau)-\theta(0) = \int_0^\tau \frac{Y(t,x)x(t) + X(t,y)y(t)}{x^2(t) + y^2(t)} dt &\geq \delta_k^* \tau \\
    &> 2\pi k 
    > k \pi. \end{aligned}$$
    Hence, the thesis follows.
    \item Assume that $\sigma < \tau$. In this case, the maximality of $\sigma$ implies that
    $$
    \check{r}_k \leq \rho(t) \leq \hat{r}_k, \quad \text{for all }\, t \in [0,\sigma] \quad \text{and} \quad \rho(\sigma) \in \{\check{r}_k, \hat{r}_k\}.
$$
We consider the case in which $\rho(\sigma) = \hat{r}_k$. The treatment of the other situation is completely symmetric (involving the consideration of $r_-$ instead of $r_+$) and thus is omitted.

We want to prove that
\begin{equation}
    \theta(\sigma) - \theta(0) > (k+1)\pi \label{eq:3.25}
\end{equation}
holds. Indeed, from \eqref{eq:3.25} one can conclude easily by observing that $\theta(\tau) - \theta(\sigma) > -\pi$ (as a consequence of {\cite[Lemma 3.1]{BoZa13}}, thanks to \eqref{third}) and therefore $\theta(\tau) - \theta(0) > k\pi$, yielding the desired result. If, by contradiction, we suppose that \eqref{eq:3.25} is not true, from $\theta'(t) > 0$ for all $t \in [0,\sigma]$, we obtain
$$
    0 < \theta(t) - \theta(0) < (k+1)\pi, \quad \text{for all }\, t \in\, ]0,\sigma[\,.
$$
At this point we note that the first equation in \eqref{eqs} can be written as
$$
    \rho' =  \rho S(t, \rho\cos\theta, -\rho\sin\theta),
$$
where
$$
    S(t,x,y) := \frac{X(t,y)x -Y(t,x)y}{x^2+y^2},
$$
while the second equation in \eqref{eqs} is of the form
$$
    \theta' = U(t, \rho\cos\theta, -\rho\sin\theta),
$$
with
$$
    U(t,x,y) := \frac{Y(t,x)x + X(t,y)y}{x^2+y^2}.
$$
Therefore, from the sign conditions \eqref{first} and \eqref{second}, we find that
\begin{equation}
    \frac{S(t,x,y)}{U(t,x,y)} \leq \mathcal{M}_+(x,y), \quad 
    \text{for all }\, t \in [0,\tau],\ x,y \in (-\delta,\delta),\ x^2+y^2 > 0. \label{eq:3.26}
\end{equation}
Let $\gamma : \mathbb{R} \to \mathbb{R}_0^+$ be the solution of \eqref{eq:3.18} with $r(\theta_0) = \rho_k^*$, that is 
$\gamma(\theta) := r_+(\theta; \theta_0, \rho_k^*)$. From the definition of $M_k^*(\rho_0)$ and the choice of $\hat{r}_k$ in \eqref{rj}, we can find $\varepsilon > 0$ such that
$$
    \gamma(\theta) < \hat{r}_k, \quad \text{for all }\, \theta \in [\theta_0 - \varepsilon,\, \theta_0 + 2(k+1)\pi + \varepsilon]\,.
$$
By the definition of $\gamma$ and \eqref{eq:3.26} it follows that
$$
    R(t, \gamma(\theta), \theta) \leq \gamma'(\theta)\Theta(t, \gamma(\theta), \theta), \quad
    \text{for all }\, t \in [0,\tau],\ \theta \in [\theta_0 - \varepsilon,\, \theta_0 + 2(k+1)\pi + \varepsilon]\,.
$$
Using {\cite[Corollary 5.1]{BoZa13}} with the positions $\theta_1 := \theta_0 - \varepsilon$, $\theta_2 := \theta_0 + 2(k+1)\pi + \varepsilon$ and $I := [0,\tau]$, we easily conclude that
$$
    \rho(t) \leq \gamma(\theta(t)), \quad \text{for all }\, t \in [0,\sigma]
$$
and hence
$$
    \rho(t) \leq M_{k+1}^* < \hat{r}_k \quad \text{for all }\, t \in [0,\sigma],
$$
thus contradicting the hypothesis that $\rho(\sigma) = \hat{r}_k$.

With a similar argument, one can check that \eqref{eq:3.25} holds if $\rho(\sigma) = \check{r}_k$. Hence, in any case, the conclusion follows.
\end{itemize}

\end{proof}

Additionally, we prove a variant of Proposition \ref{Prop21}, in which we consider a reversed time direction:
\begin{proposition}
\label{timeinverse}
Let $\delta>0$ and let $a_i, b_i : [-\delta, \delta] \to \mathbb{R}$, with $i = 1, 2$, be Lipschitz continuous functions such that
\begin{equation}
    0 < a_1(s)s \leq b_1(s)s \quad \text{and} \quad 0 < b_2(s)s \leq a_2(s)s, 
    \quad \text{for every } s \neq 0. \label{eq:2.31}
\end{equation}
Then, for every positive integer $k$, there exists $\hat \rho_k>0$ such that, for every $\rho^*_k \in (0,\hat\rho_k]$, there exist $\tau_k^* > 0$ such that, for every interval $I = [t_0, t_1] \subset \mathbb{R}$ with $t_1 - t_0 > \tau_k^*$ and for every locally Lipschitz continuous functions $X, Y : I \times \mathbb{R} \to \mathbb{R}$ satisfying
\begin{equation}
\label{first1}
    a_1(y)y \leq X(t,y)y \leq b_1(y)y, \quad \text{for every } t \in I,\ y \in (-\delta, \delta)
\end{equation}
and
\begin{equation}
\label{second2}
    b_2(x)x \leq Y(t,x)x \leq a_2(x)x, \quad \text{for every } t \in I,\ x \in (-\delta, \delta),
\end{equation}
\begin{equation}\label{third2}
\hspace{-0.5em} 0 <Y(t,x) x ,\quad \quad \quad \quad \quad \,\text{for every } t\in I, \ x \not= 0    
\end{equation}
it holds that any solution $(x(t), y(t))$ of \eqref{hamiltonian}, defined on $I$ and satisfying $x(t_1)^2 + y(t_1)^2 = (\rho_k^*)^2$, fulfills $x(t)^2 + y(t)^2 > 0$ for every $t \in I$ and
$$
    \theta(t_1) - \theta(t_0) > k\pi.
$$
\end{proposition}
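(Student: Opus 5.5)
The most economical route is to reduce Proposition~\ref{timeinverse} to Proposition~\ref{Prop21} by reversing time, rather than redo the spiral-comparison argument. Given $(x(t),y(t))$ solving \eqref{hamiltonian} on $I=[t_0,t_1]$, set $s := t_0+t_1-t$ and define
\[
\hat x(s):=y(t_0+t_1-s),\qquad \hat y(s):=x(t_0+t_1-s).
\]
Then $\hat x'(s)=-y'=Y(t_0+t_1-s,\hat y)=:\hat X(s,\hat y)$ and $\hat y'(s)=-x'=-X(t_0+t_1-s,\hat x)=:-\hat Y(s,\hat x)$. So $(\hat x,\hat y)$ solves a system of the form \eqref{hamiltonian} on the same interval $I$, with the roles of $X$ and $Y$ exchanged.

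Next I check the hypotheses of Proposition~\ref{Prop21} for $(\hat X,\hat Y)$.
\begin{itemize}
\item By \eqref{second2}, $b_2(\hat y)\hat y\le \hat X\hat y\le a_2(\hat y)\hat y$, so \eqref{first} holds with $(\hat a_1,\hat b_1):=(b_2,a_2)$.
\item By \eqref{first1}, $a_1(\hat x)\hat x\le \hat Y\hat x\le b_1(\hat x)\hat x$, so \eqref{second} holds with $(\hat b_2,\hat a_2):=(a_1,b_1)$.
\item The ordering \eqref{eq:2.3} for the hatted functions is exactly \eqref{eq:2.31}.
\item The global sign condition \eqref{third} for $\hat X$ is exactly \eqref{third2}. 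This explains why Proposition~\ref{timeinverse} assumes the global sign on $Y$ rather than on $X$.
\end{itemize}
The initial condition $\hat x(t_0)^2+\hat y(t_0)^2=x(t_1)^2+y(t_1)^2=(\rho_k^*)^2$ is the one required there. So, taking $\hat\rho_k$ and $\tau_k^*$ from Proposition~\ref{Prop21} applied to the hatted data, every $|I|>\tau_k^*$ gives $\hat x^2+\hat y^2>0$ on $I$ (hence $x^2+y^2>0$) and $\hat\theta(t_1)-\hat\theta(t_0)>k\pi$.

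The remaining step is to relate the angles. Write $(\hat x,\hat y)=(\hat\rho\cos\hat\theta,-\hat\rho\sin\hat\theta)$. The swap $(x,y)\mapsto(y,x)$ is a reflection across the diagonal, so it reverses orientation: clockwise angle of the swapped path equals minus the clockwise angle of the original, up to a constant. Reversing the time parametrization reverses orientation again. Concretely, $\hat\theta(s)=c-\big(\pi/2+\theta(t_0+t_1-s)\big)$ for a constant $c$, i.e. $\hat\theta(s)=\text{const}-\theta(t_0+t_1-s)$. Hence
\[
\hat\theta(t_1)-\hat\theta(t_0)=\theta(t_1)-\theta(t_0),
\]
and the conclusion follows.

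The main point needing care is this last orientation bookkeeping. I would check it by verifying the identity $\hat x\hat y'-\hat x'\hat y$ against the corresponding expression for $(x,y)$ at the reflected time, and by confirming that the Lipschitz/local assumptions transfer unchanged. If a sign mismatch appeared, the fallback is to rerun the proof of Proposition~\ref{Prop21} verbatim backwards from $t_1$: use the same energies $\mathcal E_A,\mathcal E_B$ and spirals $r_\pm$, with $\sigma$ now the minimal time such that $\check r_k\le\rho\le\hat r_k$ on $[\sigma,t_1]$, and replace \cite[Lemma 3.1]{BoZa13} by its analogue using the sign of $Y$ to get $\theta(\sigma)-\theta(t_0)>-\pi$.
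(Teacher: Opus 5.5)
Your proposal is correct and follows essentially the paper's own proof: the same time reversal combined with the swap of coordinates, the same relabeling $(\hat a_1,\hat b_1,\hat b_2,\hat a_2)=(b_2,a_2,a_1,b_1)$ so that the global sign condition on $Y$ becomes condition \eqref{third} for the new $X$, and the same angle identity $\hat\theta(s)=-\pi/2-\theta(t_0+t_1-s)$ modulo $2\pi$. The only difference is cosmetic: you reflect via $s=t_0+t_1-t$, which keeps the interval equal to $I$, whereas the paper uses $t\mapsto -t$ on $[-t_1,-t_0]$.
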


\begin{proof}
Let $(x(t),y(t))$ be a solution of \eqref{hamiltonian} defined on $I=[t_0,t_1]$. Define
$$
\tilde x(t) = y(-t), \quad \tilde y(t) = x(-t),
$$
for $t \in \tilde I := [-t_1,-t_0]$. A direct computation shows that $(\tilde x,\tilde y)$ satisfies
\begin{equation} \label{hamiltoniantilde} 
\begin{cases}
\tilde x'(t) = Y(-t, \tilde y(t)) =: \tilde X(t, \tilde y(t)) \\
\tilde y'(t) =  - X(-t,\tilde x(t)) =: -\tilde Y(t, \tilde x(t)) ,
\end{cases}
\end{equation}
that is, a system of the same form as \eqref{hamiltonian} on $\tilde I$. Moreover, up to the relabeling
$$
\tilde a_1(s) = b_2(s), \quad \tilde a_2(s)= b_1(s), \quad 
\tilde b_1(s) = a_2(s), \quad \tilde b_2(s) = a_1(s),
$$
one verifies that $\tilde X$ satisfies \eqref{first} and \eqref{third}, and that $\tilde Y$ satisfies \eqref{second}. Hence, one can apply Proposition \ref{Prop21} to \eqref{hamiltoniantilde}.

We now introduce polar coordinates for $(\tilde x,\tilde y)$:
\begin{equation}
\label{polartilde}
\tilde x(t) = \tilde \rho(t) \cos \tilde \theta(t), \quad \tilde y(t) = - \tilde \rho(t) \sin \tilde \theta (t).
\end{equation}
If $|\tilde I| = |I| > \tau_k^*$ and
$$
\tilde x^2(-t_1) + \tilde y^2(-t_1) = (\rho_k^*)^2 = x^2(t_1) + y^2(t_1),
$$
then Proposition \eqref{Prop21} yields
\begin{equation}
\label{des}
\tilde \theta(-t_0) - \tilde \theta (-t_1) > k \pi.
\end{equation}

Finally, we explicitly state the relation between the angular variables. From the definition of $(\tilde x,\tilde y)$, \eqref{polartilde} and \eqref{eq:2.2}, we obtain
$$
\tilde x(t) = y(-t) = -\rho(-t)\sin\theta(-t), \quad 
\tilde y(t) = x(-t) = \rho(-t)\cos\theta(-t).
$$
Comparing with \eqref{polartilde}, it follows that $\tilde \rho(t)=\rho(-t)$ and
$$
\cos \tilde \theta(t) = -\sin \theta(-t), \quad 
-\sin \tilde \theta(t) = \cos \theta(-t),
$$
which implies
$$
\tilde \theta (t) = - \frac{\pi}{2} - \theta (-t).
$$
Substituting this identity into \eqref{des}, we obtain
$$
\theta(t_1) - \theta(t_0) > k \pi,
$$
which concludes the proof.
\end{proof}

\end{document}